\numberwithin{equation}{subsection} 
\numberwithin{figure}{subsection} 
\let\c@equation\c@figure
\newtheorem{theorem}[equation]{Theorem}
\newtheorem{corollary}[equation]{Corollary}
\newtheorem{lemma}[equation]{Lemma}
\newtheorem{prop}[equation]{Proposition}
\newtheorem{propconstr}[equation]{Proposition-Construction}
\theoremstyle{remark}
\newtheorem{remark}[equation]{Remark}
\newtheorem{example}[equation]{Example}
\newtheorem{question}[equation]{Question}
\theoremstyle{definition}
\newtheorem{defn}[equation]{Definition}
\newcommand\nc{\newcommand}
\nc\on{\operatorname}
\newcommand\fp{{\mathfrak p}}
\newcommand\fq{{\mathfrak q}}
\newcommand\fm{{\mathfrak m}}
\nc\Hom{\on{Hom}}
\nc\Sections{\on{Sections}}
\nc\Sym{\on{Sym}}
\nc\Spec{\on{Spec}}
\nc\Specm{\on{Specm}}
\nc\ul{\underline}
\nc{\dfp}{\overset{\cdot}{\fp}}
\nc{\dfq}{\overset{\cdot}{\fq}}
\nc{\dfm}{\overset{\cdot}{\fm}}
\begin{document}

\title{Arithmetic representations of fundamental groups I}
\author{Daniel Litt}

\begin{abstract}
Let $X$ be a normal algebraic variety over a finitely generated field $k$ of characteristic zero, and let $\ell$ be a prime.  Say that a continuous $\ell$-adic representation $\rho$ of $\pi_1^{\text{\'et}}(X_{\bar k})$ is \emph{arithmetic} if there exists a representation $\tilde \rho$ of a finite index subgroup of  $\pi_1^{\text{\'et}}(X)$, with $\rho$ a subquotient of $\tilde\rho|_{\pi_1(X_{\bar k})}$. We show that there exists an integer $N=N(X, \ell)$ such that every nontrivial, semisimple arithmetic representation of $\pi_1^{\text{\'et}}(X_{\bar k})$ is nontrivial mod $\ell^N$. As a corollary, we prove that any nontrivial semisimple representation of $\pi_1^{\text{\'et}}(X_{\bar k})$ which \emph{arises from geometry} is nontrivial mod $\ell^N$.
\end{abstract}
\maketitle

\setcounter{tocdepth}{1}
\tableofcontents


\section{Introduction}

\subsection{Statement of main results}
The purpose of this paper is to study representations of the \'etale fundamental group of a variety $X$ over a finitely generated field $k$ --- in particular, we wish to understand the restrictions placed on such representations by the action  of the absolute Galois group of $k$ on $\pi_1^{\text{\'et}}(X_{\bar k})$. 
\begin{defn}\label{arithmetic-defn}
Let $k$ be a field and $X$ a geometrically connected $k$-variety with a geometric point $\bar x$. Then we say that a continuous representation of the geometric \'etale fundamental group of $X$ $$\rho: \pi_1^{\text{\'et}}(X_{\bar k}, \bar x)\to GL_n(\mathbb{Z}_\ell)$$  is \emph{arithmetic} if there exists a finite extension $k'/k$ and a representation $$\tilde{\rho}: \pi_1^{\text{\'et}}(X_{k'}, \bar x)\to GL_n(\mathbb{Z}_\ell),$$ such that $\rho$ is a subquotient of $\tilde{\rho}|_{\pi_1^{\text{\'et}}(X_{\bar k}, \bar x)}$.
\end{defn}
The main theorem of this paper is:
\begin{theorem}\label{main-arithmetic-result}
Let $k$ be a finitely generated field of characteristic zero, and $X/k$ a normal, geometrically connected variety.  Let $\ell$ be a prime.  Then there exists $N=N(X,\ell)$ such that any arithmetic representation $$\rho: \pi_1^{\text{\'et}}(X_{\bar k})\to GL_n(\mathbb{Z}_\ell),$$ which is trivial mod $\ell^N$, is unipotent.
\end{theorem}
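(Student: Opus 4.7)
The plan is to combine a reduction to semisimple representations with a weight argument using the action of Frobenius on the lower central series of the geometric fundamental group. First, I would reduce to the case where $\rho$ is semisimple: if $\rho$ is not unipotent, then the Zariski closure $\ol{G} \subset GL_{n,\BQ_\ell}$ of $\rho(\pi_1^{\text{\'et}}(X_{\bar k}))$ has a nontrivial reductive quotient $L$, and composing $\rho$ and $\tilde\rho$ with the projection $\ol{G} \twoheadrightarrow L$ produces a nontrivial semisimple arithmetic representation still trivial modulo $\ell^N$. Thus it suffices to prove the sharpened form of the abstract: for some $N = N(X,\ell)$, every nontrivial semisimple arithmetic representation is nontrivial modulo $\ell^N$. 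A Lefschetz--Bertini reduction then allows one to assume $X$ is a smooth curve.

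Next, I would spread $X$ out over $\on{Spec}(R)$ with $R$ of finite type over $\BZ[1/\ell]$ and $k = \on{Frac}(R)$, so that at each closed point $v$ of residue characteristic prime to $\ell$ one has (after enlarging $k$) a Frobenius lift $\sigma_v \in \pi_1^{\text{\'et}}(\mcl{X})$ acting by conjugation on $\pi_1^{\text{\'et}}(X_{\bar k})$ as the geometric Frobenius. The crucial input is Deligne's weight filtration (Weil~II) on the pro-$\ell$ quotient $\Pi$ of $\pi_1^{\text{\'et}}(X_{\bar k})$: the graded pieces of its lower central series are pure Galois modules of strictly negative weights, with $H_1(X_{\bar k},\BQ_\ell)$ of weight $-1$ (and $-2$ if $X$ is not proper), and the $i$-th piece of weights in $[-2i,-i]$.

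Now suppose $\rho$ is semisimple, arithmetic, nontrivial, and trivial modulo $\ell^N$ for $N$ very large, and write $\rho(g) = 1 + \ell^N B(g)$. From $\rho(gh) = \rho(g)\rho(h)$ one sees that $B \bmod \ell^N$ is a Galois-equivariant homomorphism factoring through $H_1(X_{\bar k},\BZ_\ell)/\ell^N \to \mfk{g}/\ell^N$, where $\mfk{g} = \on{Lie}(\ol{G})$ is reductive by semisimplicity. The Frobenius eigenvalues on the source are reductions of Weil numbers of weights $-1$ or $-2$ (in particular $\ell$-adic units of prescribed absolute value), while those on the target arise from $\on{Ad}(\tilde\rho(\sigma_v))$ and are ratios of eigenvalues of $\tilde\rho(\sigma_v)$. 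For the map to be nonzero, these reductions must coincide modulo $\ell^N$, forcing arithmetic constraints on $\tilde\rho(\sigma_v)$ (involving $q_v$) that one can violate by varying $v$ via Chebotarev, provided $N$ is large enough depending only on $X$ and $\ell$. Hence $B \bmod \ell^N$ is zero, so $\rho$ is trivial modulo $\ell^{N+1}$; iterating on the higher graded pieces of the lower central series eventually yields $\rho = 1$, a contradiction. The main obstacle will be the uniformity: choosing $N$ depending only on $X$ and $\ell$ (and not on $\rho$, $n$, or $\mfk{g}$), controlling the precision loss as the induction proceeds through the entire lower central series, and handling the interaction between the integral lattice in $\mfk{g}$ and the conjugation action of elements of $\tilde\rho(\pi_1^{\text{\'et}}(X_{k'}))$ that need not themselves be close to the identity.
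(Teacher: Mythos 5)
There is a genuine gap, and it sits exactly at the step carrying all the weight: the claim that a nonzero Galois\-/equivariant map $H_1(X_{\bar k},\mathbb{Z}/\ell^N)\to(\mathfrak{gl}_n(\mathbb{Z}/\ell^N),\on{Ad}\tilde\rho(\sigma_v))$ forces congruences ``that one can violate by varying $v$ via Chebotarev, provided $N$ is large enough depending only on $X$ and $\ell$.'' Since $\tilde\rho$ is an \emph{arbitrary} continuous representation, the eigenvalues of $\on{Ad}\tilde\rho(\sigma_v)$ are arbitrary $\ell$-adic units: there is no Weil II purity on the target, so there is no weight mismatch to exploit (that mismatch is available only for geometric $\rho$, which is why this style of argument can recover Corollary \ref{main-geometric-cor} for fixed $n$ but not Theorem \ref{main-arithmetic-result}). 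Moreover the congruences you want to violate depend on $\tilde\rho$, and your $N$ must work simultaneously for all $\tilde\rho$ of all dimensions; Chebotarev gives no quantitative $\ell$-adic control on $v_\ell(\mu-\nu)$ uniform in $\tilde\rho$. The universal elliptic curve over $Y(\ell^N)$ shows your vanishing claim is simply false without a precise, $X$-dependent threshold: there the induced equivariant map $H_1\otimes\mathbb{Z}/\ell^N\to\mathfrak{gl}_2(\mathbb{Z}/\ell^N)$ is nonzero, the eigenvalue congruences are satisfied at \emph{every} $v$, and your sketch offers no definition of the threshold $N_0(X,\ell)$ that would exclude this. Consequently the first step of your iteration never gets off the ground, and each subsequent step (which, note, always factors through $H_1$ again --- not through deeper graded pieces of the lower central series, since $(\rho(g)-1)(\rho(h)-1)\equiv 0$ to double the precision) faces the identical obstruction.

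The paper's proof avoids this entirely: it never shows the map vanishes on $H_1$. Instead it produces, via Bogomolov's open-image theorem together with a semisimplicity theorem for the Galois action on $\mathbb{Q}_\ell[[\pi_1^\ell(X_{\bar k},\bar x)]]/\mathscr{I}^n$, a single element $\sigma_\alpha$ in the \emph{actual} image of Galois acting on $\on{gr}_W^{-i}$ of the full completed group algebra by the exact scalar $\alpha^i$ for every $i$, with $\alpha$ not a root of unity; it controls the denominators of the $\sigma_\alpha$-eigenvectors by introducing the convergent group rings $\mathbb{Q}_\ell[[\pi_1^\ell(X_{\bar k},\bar x)]]^{\leq\ell^{-r}}$ (this is where $N$ acquires its dependence on $X$ and $\ell$, through the index of the Galois image in its Zariski closure and the ``integral $\ell$-adic periods''); and it then uses only that the finite-dimensional target $\mathfrak{gl}_n(\mathbb{Q}_\ell)$ has finitely many $\on{Ad}\tilde\rho(\sigma_\alpha^m)$-eigenvalues, so the map must kill $W^{-i}$ for $i\gg 0$ (with the cutoff allowed to depend on $n$), yielding unipotence rather than triviality. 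Two smaller points: your reduction to the semisimple case via the reductive quotient of the Zariski closure does not obviously preserve mod-$\ell^N$ triviality of an integral lattice (the paper instead intersects the socle filtration with the lattice, Lemma \ref{subquotient-lemma}); and you will in any case need $\rho$ to extend (not merely be a subquotient) to make the target genuinely $\sigma_v$-equivariant, which is what that lemma arranges.
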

This theorem implies the result of the abstract (that any non-trivial arithmetic representation $\rho$ with $\rho\otimes \mathbb{Q}_\ell$ semisimple is non-trivial mod $\ell^N$), because any semisimple unipotent representation is trivial.  Note that $N$ is independent of $n$ (the dimension of the representation $\rho$); to our knowledge this was not expected.
\begin{defn}\label{geometric-defn}
Let $k$ be a field and $X$ a geometrically connected $k$-variety with geometric point $\bar x$. Then we say that a continuous representation $$\rho: \pi_1^{\text{\'et}}(X_{\bar k}, \bar x)\to GL_n(\mathbb{Z}_\ell)$$ is \emph{geometric} if there exists a smooth proper morphism $\pi: Y\to X$ and an integer $i\geq 0$ so that $\rho$ appears as a subquotient of the natural monodromy representation $$\pi_1^{\text{\'et}}(X_{\bar k}, \bar x)\to GL((R^i\pi_*\underline{\mathbb{Z}_\ell})_{\bar x}).$$
\end{defn}
As a corollary of Theorem \ref{main-arithmetic-result}, we have:
\begin{corollary}\label{main-geometric-cor}
Let $k$ be any field of characteristic zero, and $X/k$ a normal, geometrically connected variety.  Let $\ell$ be a prime.  Then there exists $N=N(X, \ell)$ such that any geometric representation $$\rho: \pi_1^{\text{\'et}}(X_{\bar k})\to GL_n(\mathbb{Z}_\ell),$$ which is trivial mod $\ell^N$, is unipotent.
\end{corollary}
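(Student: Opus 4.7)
The plan is to deduce the corollary from Theorem~\ref{main-arithmetic-result} by spreading the data of a geometric representation out over a finitely generated subfield of $k$. Since $X/k$ is of finite type it admits a model $X_0/k_0$ over some finitely generated subfield $k_0 \subset k$, and (shrinking $k_0$ if necessary) one may assume $X_0$ is normal and geometrically connected. Set $N := N(X_0, \ell)$ via the main theorem. Because the geometric \'etale fundamental group of a geometrically connected variety in characteristic zero is invariant under extension of the algebraically closed base field, a choice of compatible geometric points identifies $\pi_1^{\text{\'et}}(X_{\bar k}) \cong \pi_1^{\text{\'et}}(X_{0, \overline{k_0}})$; it will suffice to show that every geometric representation is arithmetic in the sense of Definition~\ref{arithmetic-defn} applied to $X_0/k_0$.

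Given a smooth proper $\pi \colon Y \to X$ realizing $\rho$, I would spread $\pi$ out: since it is of finite presentation, there exist a finitely generated $k_0$-subalgebra $A \subset k$ and a smooth proper morphism $\pi_A \colon Y_A \to X_0 \times_{k_0} \Spec A$ whose base change along $A \hookrightarrow k$ is $\pi$. By smooth and proper base change, $R^i(\pi_A)_*\underline{\mathbb{Z}_\ell}$ is lisse on $X_0 \times_{k_0} \Spec A$, and its pullback to $X$ recovers $R^i\pi_*\underline{\mathbb{Z}_\ell}$. To convert this into an arithmetic representation for $X_0/k_0$, I would pick a closed point $s \in \Spec A$; since $A$ is a finitely generated algebra over the field $k_0$, Hilbert's Nullstellensatz ensures the residue field $k_0' := \kappa(s)$ is finite over $k_0$. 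Pulling $R^i(\pi_A)_*\underline{\mathbb{Z}_\ell}$ back along $X_{0, k_0'} \to X_0 \times_{k_0} \Spec A$ yields a representation $\tilde\rho$ of $\pi_1^{\text{\'et}}(X_{0, k_0'})$ whose restriction to $\pi_1^{\text{\'et}}(X_{0, \overline{k_0}})$ agrees, up to inner automorphism, with the original geometric monodromy of $R^i\pi_*\underline{\mathbb{Z}_\ell}$. Since $\rho$ is a subquotient of the latter, it is a subquotient of $\tilde\rho|_{\pi_1^{\text{\'et}}(X_{\bar k})}$, hence arithmetic for $X_0/k_0$, and Theorem~\ref{main-arithmetic-result} produces the conclusion.

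The main obstacle I anticipate is the basepoint and compatibility bookkeeping underlying the claim that the specialized $\tilde\rho$ really restricts to the original geometric monodromy: one must identify compatibly the several incarnations of the geometric fundamental group (over $\bar k$, over $\overline{k_0}$, over a geometric generic point of $\Spec A$, and over an algebraic closure of $k_0'$) and verify that all the relevant lisse sheaves restrict as expected. These identifications all follow from invariance of the geometric \'etale fundamental group under extension of algebraically closed base fields together with smooth and proper base change, but assembling the diagram of compatibilities carefully is the place where technical care is needed.
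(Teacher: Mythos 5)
Your proposal is correct and follows essentially the same route as the paper: spread the smooth proper family out over a finitely generated $k_0$-algebra $R\subset k$, specialize at a closed point (whose residue field is finite over $k_0$) to see that $\rho$ is arithmetic for a model $X_0/k_0$, and apply Theorem~\ref{main-arithmetic-result}, with $N$ depending only on the model and hence uniform in $\rho$. The only detail the paper adds that you elide is passing to a torsion-free (lisse) subquotient of $R^i\tilde\pi_*\underline{\mathbb{Z}_\ell}$ so that the monodromy genuinely lands in $GL_n(\mathbb{Z}_\ell)$; this is harmless.
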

If $k=\mathbb{C}$, we may by standard comparison results replace $\pi_1^{\text{\'et}}(X_{\bar k})$ with the usual topological fundamental group $\pi_1(X(\mathbb{C})^{\text{an}})$ in Corollary \ref{main-geometric-cor} above.  For fixed $n$ (the dimension of $\rho$) this corollary (but not Theorem \ref{main-arithmetic-result}) should follow from the main result of \cite{delignemonodromy}; the independence of $N$ from $n$ appears to be new and is, we believe, surprising.

In many cases, the invariant $N(X, \ell)$ from Theorem \ref{main-arithmetic-result} may be made explicit.  For example, if $X=\mathbb{P}^1_k\setminus\{x_1, \cdots, x_m\}$, $N(X, \ell)=1$ for almost all $\ell$; see Section \ref{bounds-section} for details. See Section \ref{sharp-section} for a discussion of the extent to which these results are sharp, and examples of representations which we now know (using Theorem \ref{main-arithmetic-result}) not to be arithmetic or geometric.
\subsection{Theoretical aspects and proofs}
The proof of Theorem \ref{main-arithmetic-result} is ``anabelian" in nature; this is natural as Theorem \ref{main-arithmetic-result} is a purely group-theoretic statement about \'etale fundamental groups of varieties over finitely-generated fields.  We think the fact that these anabelian methods have concrete geometric applications (e.g.~Corollary \ref{main-geometric-cor})  is surprising and interesting.

We now sketch the idea of the proof of Theorem \ref{main-arithmetic-result}.  The proof requires several technical results on arithmetic fundamental groups which are of independent interest.  For simplicity we assume $X$ is a smooth, geometrically connected curve over a finitely generated field $k$; indeed, one can immediately reduce to this case using an appropriate Lefschetz theorem.

\emph{Step 1  (Section \ref{pi1-preliminaries}).}  Let $x\in X(k)$ be a rational point, and choose an embedding $k\hookrightarrow \bar k$; let $\bar x$ be the associated geometric point of $x$.  Let $\pi_1^{\ell}(X_{\bar k}, \bar x)$ be the pro-$\ell$ completion of the geometric \'etale fundamental group of $X$.  Let $\mathbb{Q}_\ell[[\pi_1^{\ell}(X_{\bar k}, \bar x)]]$ be the $\mathbb{Q}_\ell$-Mal'cev Hopf algebra associated to $\pi_1^\ell(X_{\bar k}, \bar x)$ (this is an algebra whose continuous representations are the same as the unipotent $\mathbb{Q}_\ell$-representations of $\pi_1^{\ell}(X_{\bar k}, \bar x)$), and $W^\bullet$ the weight filtration on $\mathbb{Q}_\ell[[\pi_1^{\ell}(X_{\bar k}, \bar x)]]$.  Then for any $\alpha\in \mathbb{Z}_\ell^{\times}$ sufficiently close to $1$ we construct (Theorem \ref{quasi-scalar-pi1}) certain special elements $\sigma_\alpha\in G_k$ which act on $\on{gr}^{-i}_{W}\mathbb{Q}_\ell[[\pi_1^{\ell}(X_{\bar k}, \bar x)]]$ via $\alpha^i\cdot \on{Id}$.   A key step in the construction of such a $\sigma_\alpha$ (Lemma \ref{quasi-scalar-h1}) was suggested to the author by Will Sawin \cite{sawin} after reading an earlier version of this paper, and is related to ideas of Bogomolov \cite{bogomolov}.

The key input here is a semi-simplicity result (Theorem \ref{quasi-scalar-semisimple}), which is likely of independent interest.  Arguments analogous to those of the proof of Theorem \ref{quasi-scalar-semisimple} prove Theorem \ref{frobenius-semisimple}: that if $Y$ is a smooth variety over $\mathbb{F}_q$, admitting a simple normal crossings compactification, then for $y\in Y(\mathbb{F}_q)$, Frobenius acts semisimply on $\mathbb{Q}_\ell[[\pi_1^{\ell}(Y_{\overline{\mathbb{F}_q}}, \bar y)]]$.

\emph{Step 2 (Section \ref{integral-l-adic-periods-section}).}  For each real number $r>0$, we construct certain Galois-stable normed subalgebras $$\mathbb{Q}_\ell[[\pi_1^{\ell}(X_{\bar k}, \bar x)]]^{\leq \ell^{-r}}\subset \mathbb{Q}_\ell[[\pi_1^{\ell}(X_{\bar k}, \bar x)]]$$ called ``convergent group rings," which have the following property:  If $$\rho:\pi_1^{\ell}(X_{\bar k}, \bar x)\to GL_n(\mathbb{Z}_\ell)$$ is trivial mod $\ell^m$ with $r<m$, then there is a natural commutative diagram of continuous ring maps
$$\xymatrix{
\mathbb{Z}_\ell[[\pi_1^{\ell}(X_{\bar k}, \bar x)]]\ar[r]^-{\rho} \ar[d] & \mathfrak{gl}_n(\mathbb{Z}_\ell)\ar[d] \\
\mathbb{Q}_\ell[[\pi_1^{ \ell}(X_{\bar k}, \bar x)]]^{\leq\ell^{-r}} \ar[r] & \mathfrak{gl}_n(\mathbb{Q}_\ell).
}$$
where $\mathbb{Z}_\ell[[\pi_1^{\ell}(X_{\bar k}, \bar x)]]$ is the group ring of the pro-$\ell$ group $\pi_1^{ \ell}(X_{\bar k}, \bar x)$, and $\mathfrak{gl}_n(R)$ is the ring of $n\times n$ matrices with entries in $R$ (Proposition \ref{convergence-prop}).  Our second main result on the structure of the Galois action on $\pi_1^{ \ell}(X_{\bar k}, \bar x)$ is Theorem \ref{quasi-scalar-eigenvectors}, which states that for $\sigma_\alpha$ as in Step 1, there exists $r_\alpha>0$ such that $\mathbb{Q}_\ell[[\pi_1^\ell(X_{\bar k}, \bar x)]]^{\leq \ell^{-r}}$ admits a set of $\sigma_\alpha$-eigenvectors with dense span, as long as $r>r_\alpha$.  Loosely speaking, this means that the denominators of the $\sigma_\alpha$-eigenvectors in $$\mathbb{Q}_\ell[[\pi_1^{\ell}(X_{\bar k}, \bar x)]]/\mathscr{I}^n$$ do not grow too quickly in $n$, where $\mathscr{I}$ is the augmentation ideal.

\emph{Step 3 (Section \ref{applications-section}).} Choose $\sigma_\alpha$ as in Step 1, with $\alpha$ not a root of unity. Suppose $$\rho: \pi_1^{\ell}(X_{\bar k}, \bar x) \to GL(V)$$ is an arithmetic representation on a finite free $\mathbb{Z}_\ell$-module $V$. Then by a socle argument (Lemma \ref{subquotient-lemma}), we may assume that $\rho$ extends to a representation of $\pi_1^{\text{\'et}, \ell}(X_{k'}, \bar x)$ for some $k'/k$ finite.  In particular, for $m$ such that $\sigma_\alpha^m\in G_{k'}\subset G_k$, $\sigma_\alpha^m$ acts on $\mathfrak{gl}(V)$ so that the morphism $$\rho: \mathbb{Z}_\ell[[\pi_1^{ \ell}(X_{\bar k}, \bar x)]]\to \mathfrak{gl}(V)$$ is $\sigma_\alpha^m$-equivariant.

Let $r_\alpha$ be as in Step 2, and suppose $\rho$ is trivial mod $\ell^n$ for some $n>r_\alpha$; choose $r$ with $r_\alpha<r<n$.  Thus by Step 2, we obtain a $\sigma_\alpha^m$-equivariant map $$\tilde{\rho}: \mathbb{Q}_\ell[[\pi_1^{\ell}(X_{\bar k}, \bar x)]]^{\leq \ell^{-r}}\to \mathfrak{gl}(V\otimes \mathbb{Q}_\ell).$$  But $\mathfrak{gl}(V\otimes \mathbb{Q}_\ell)$ is a finite-dimensional vector space; thus the action of $\sigma_\alpha^m$ on $\mathfrak{gl}(V\otimes \mathbb{Q}_\ell)$ has only finitely many eigenvalues.  But by Step 2, and our choice of $\sigma_\alpha$, this implies that for $N\gg 0$, $$\tilde{\rho}(W^{-N}\mathbb{Q}_\ell[[\pi_1^{\ell}(X_{\bar k}, \bar x)]]^{\leq\ell^{-r}})=0,$$ where again $W^{\bullet}$ denotes the weight filtration.  It is not hard to see that the $W$-adic topology and the $\mathscr{I}$-adic topology on $\mathbb{Q}_\ell[[\pi_1^{\ell}(X_{\bar k}, \bar x)]]^{\leq \ell^{-r}}$ agree, where $\mathscr{I}$ is the augmentation ideal. Hence for some $N'\gg 0$, $$\tilde{\rho}(\mathscr{I}^{N'}\cap \mathbb{Q}_\ell[[\pi_1^{\ell}(X_{\bar k}, \bar x)]]^{\leq \ell^{-r}})=0,$$ which implies $\rho$ is unipotent.
\subsection{Comparison to existing results} 

This work was motivated by the geometric torsion conjecture (see e.g.~\cite{Cadoret2011}), but is of a rather different nature than most previous results. To our knowledge, most prior results along these lines employ complex-analytic techniques, which began with Nadel \cite{nadel} and were built on by Noguchi \cite{noguchi} and Hwang-To \cite{hwang-to}.  There has been a recent flurry of interest in this subject, notably two recent beautiful papers by Bakker-Tsimerman \cite{bak2, bak1}, which alerted the author to this subject.  See also the paper of Brunebarbe \cite{brunebarbe}.

These beautiful results all use the hyperbolicity of $\mathscr{A}_{g,n}$ (the moduli space of principally polarized Abelian varieties with full level $n$ structure) to obstruct maps from curves of low gonality (of course, this description completely elides the difficult and intricate arguments in those papers).  The paper \cite{hwang-to} also proves similar results for maps into other locally symmetric spaces.

The papers \cite{nadel, noguchi, hwang-to} together show that  there exists an integer $N=N(g, d)$ such that if $A$ is a $g$-dimensional Abelian variety over a curve $X/\mathbb{C}$ with gonality $d$, then $A$ cannot have full $N$-torsion unless it is a constant Abelian scheme.  The main deficiency of our result in comparison to these is that we do not give any uniformity in the gonality of $X$.  For example, if $X=\mathbb{P}^1\setminus \{x_1, \cdots, x_n\}$, our results (for example Theorem \ref{p1-main-theorem}) depend on the cross-ratios of the $x_i$.  Of course Example \ref{ridic-example} shows that such a dependence is necessary.

Moreover, our result is uniform in $g$, whereas the best existing results (to our knowledge) are at least quadratic in $g$.  Thus for any given $X$, our results improve on those in the literature for $g$ large.  To our knowledge, this sort of uniformity in $g$ was not previously expected, and is quite interesting.  Moreover in many cases our result is sharp.  See Section \ref{sharp-section} for further remarks along these lines.

Our results also hold for arbitrary representations which arise from geometry, rather than just those of weight $1$ (e.g.~those that arise from Abelian varieties).  

Finally, to our knowledge, Theorem \ref{p1-main-theorem} is the first result (apart from \cite{poonen}, which bounds torsion of elliptic curves over function fields) along these lines which works in positive characteristic.  

More arithmetic work in this subject has been done by Abramovich-Madapusi Pera-Varilly-Alvarado \cite{abr1}, and Abramovich-Varilly-Alvarado \cite{abr2}, which relate the geometric versions of the torsion conjecture to the arithmetic torsion conjecture, assuming various standard conjectures relating hyperbolicity and rational points. Cadoret and Cadoret-Tamagawa have also proven beautiful related arithmetic results (see e.g.~\cite{cadoret-tamagawa, cadorettamagawa2, cadorettamagawa3, cadorettamagawa4}).  See also e.g.~Ellenberg, Hall, and Kowalski's beautiful paper \cite{ellenberg}.

The technical workhorse of this paper is a study of the action of the Galois group of a finitely generated field $k$ on the geometric fundamental group of a variety $X/k$.  We now compare our results on this subject to those in the literature.  Deligne \cite[Section 19]{deligne} studies the action of $\on{Gal}(\overline{\mathbb{Q}}/\mathbb{Q})$ on a certain $\mathbb{Z}_\ell$-Lie algebra associated to a metabelian quotient of $\pi_1^{\text{\'et}}(\mathbb{P}^1\setminus \{0,1,\infty\})$.  In particular, he shows that certain ``polylogarithmic" extension classes are torsion, with order given by the valuations of special values of the Riemann zeta function at negative integers.  Our results (in Section \ref{integral-l-adic-periods-section}) are much blunter than these.  On the other hand, we do give results for the entire fundamental group, rather than a (finite rank) metabelian quotient.  Thus we are able to study \emph{integral, non-unipotent} aspects of the representation theory of fundamental groups.

Certain aspects of this work are also intimately related to the so-called $\ell$-adic iterated integrals of Wojtkowiak \cite{wojtkowiakI, wojtkowiakII, wojtkowiakIII, wojtkowiakIV, wojtkowiakV}.  In Section \ref{integral-l-adic-periods-section}, we bound certain ``integral $\ell$-adic periods"; these are related to Wojtkowiak's $\ell$-adic multiple polylogarithms. They are also $\ell$-adic analogues of Furusho's $p$-adic multiple zeta values \cite{furosho1, furosho2}.  

One may also view this work as an application of anabelian geometry to Diophantine questions (in particular, about function-field valued points of $\mathscr{A}_{g,n}$ and other period domains).  There is a tradition of such applications---see e.g. Kim's beautiful paper \cite{kim} and work of Wickelgren \cite{wickelgren, wickelgren2}, for example.  In particular, we believe our ``integral $\ell$-adic periods" to be related to Wickelgren's work on Massey products.

We believe the main contributions of this paper to be:
\begin{enumerate}
\item the introduction of the rings $\mathbb{Q}_\ell[[\pi_1^{\ell}(X_{\bar k}, \bar x)]]^{\leq \ell^{-r}},$ and
\item the study of the action of $G_k$, for $k$ a finitely generated field, on these rings.
\end{enumerate}
We also believe the applications of anabelian methods to questions about monodromy is interesting in and of itself --- the only antecedent of which we are aware is Grothendieck's proof of the quasi-unipotent monodromy theorem \cite[Appendix]{serre-tate}.
\subsection{Acknowledgments} 

To be added after the referee process is complete.


\section{Preliminaries on fundamental groups and some associated rings}\label{pi1-preliminaries}
\subsection{Basic definitions}
Let $k$ be a finitely-generated field of characteristic zero, and $\bar k$ an algebraic closure of $k$.  Let $X$ be a smooth, geometrically connected variety over $k$ and $\overline{X}$ a simple normal crossings compactification.  Such an $\overline{X}$ exists by Hironaka \cite{hironaka}.  Let $D=\bigcup_i D_i=\overline{X}\setminus X$ be the boundary, with $D_i$ the irreducible components of $D$.   Let $\ell$ be a prime.  We fix this notation for the rest of the paper.

If $\bar x$ is a geometric point of $X$, we denote by $\pi_1^{\text{\'et}}(X_{\bar k}, \bar x)$ the geometric \'etale fundamental group of $X$, and $\pi_1^\ell(X_{\bar k}, \bar x)$ its pro-$\ell$ completion.  If $\bar x_1, \bar x_2$ are geometric points of $X$, we let $\pi_1^{\text{\'et}}(X_{\bar k}; \bar x_1, \bar x_2)$ denote the torsor of \'etale paths between $\bar x_1, \bar x_2$, i.e. the pro-finite set of isomorphisms between the fiber functors associated to $\bar x_1, \bar x_2$ (see e.g. \cite[Collaire 5.7 and the surrounding remarks]{SGA1}).  The pro-finite set $\pi_1^{\text{\'et}}(X_{\bar k}; \bar x_1, \bar x_2)$ is a (left) torsor for $\pi_1^{\text{\'et}}(X_{\bar k}, \bar x_1)$; we let $\pi_1^\ell(X_{\bar k}; \bar x_1, \bar x_2)$ be the associated torsor for $\pi_1^\ell(X_{\bar k}, \bar x_1)$.  One may easily check that the (right) action of $\pi_1^{\text{\'et}}(X_{\bar k}, \bar x_2)$ on $\pi_1^{\text{\'et}}(X_{\bar k}; \bar x_1, \bar x_2)$ descends to an action of $\pi_1^{\ell}(X_{\bar k}; \bar x_2)$ on $\pi_1^{\ell}(X_{\bar k}; \bar x_1, \bar x_2)$, making it into a (right) torsor for this latter group as well.

\begin{defn}
\begin{enumerate}
\item Let $$\mathbb{Z}_\ell[[\pi_1^\ell(X_{\bar k}; \bar x_1, \bar x_2)]]=\varprojlim_{\pi_1^\ell(X_{\bar k}; \bar x_1, \bar x_2)\twoheadrightarrow H} \mathbb{Z}_\ell[H],$$ where the inverse limit is taken over all (continuous) finite quotients $H$ of $\pi_1^\ell(X_{\bar k}; \bar x_1, \bar x_2)$.  If $\bar x=\bar x_1 =\bar x_2$, this is the $\ell$-adic group ring $\mathbb{Z}_\ell[[\pi_1^\ell(X_{\bar k}, \bar x)]]$.  Note that $\mathbb{Z}_\ell[[\pi_1^\ell(X_{\bar k}; \bar x_1, \bar x_2)]]$ is naturally a $\mathbb{Z}_\ell[[\pi_1^\ell(X_{\bar k}, \bar x_1)]]$-$\mathbb{Z}_\ell[[\pi_1^\ell(X_{\bar k}, \bar x_2)]]$-bimodule.
\item  Let $$\mathscr{I}(\bar x)\subset \mathbb{Z}_\ell[[\pi_1^\ell(X_{\bar k}, \bar x)]]$$ be the augmentation ideal (the kernel of the augmentation map $$\mathbb{Z}_\ell[[\pi_1^\ell(X_{\bar k}, \bar x)]]\to \mathbb{Z}_\ell$$ sending $g\mapsto 1$ for $g\in \pi_1^\ell(X_{\bar k}, \bar x)$).  Let $$\mathscr{I}(\bar x_1, \bar x_2)^n=\mathscr{I}(\bar x_1)^n\cdot\mathbb{Z}_\ell[[\pi_1^\ell(X_{\bar k}; \bar x_1, \bar x_2)]]=\mathbb{Z}_\ell[[\pi_1^\ell(X_{\bar k}; \bar x_1, \bar x_2)]]\cdot \mathscr{I}(\bar x_2)^n.$$  We call this filtration the $\mathscr{I}$-adic filtration, and if the basepoints are clear, denote it by $\mathscr{I}^n$.  Note that the $\mathscr{I}$-adic topology on $\mathbb{Z}_\ell[[\pi_1^\ell(X_{\bar k}; \bar x_1, \bar x_2)]]$ is coarser than the profinite topology.
\item Let $$\mathbb{Q}_\ell[[\pi_1^\ell(X_{\bar k}; \bar x_1, \bar x_2)]]=\varprojlim_n \left(\mathbb{Z}_\ell[[\pi_1^\ell(X_{\bar k}; \bar x_1, \bar x_2)]]/\mathscr{I}(\bar x_1, \bar x_2)^n\otimes \mathbb{Q}_\ell\right),$$ topologized via the $\mathscr{I}$-adic topology.  If $\bar x=\bar x_1=\bar x_2$, this is $\mathbb{Q}_\ell[[\pi_1^\ell(X_{\bar k}, \bar x)]]$, the $\mathbb{Q}_\ell$-Mal'cev Hopf algebra of $\pi_1^\ell(X_{\bar k}, \bar x)$ (see e.g. \cite[Appendix A]{quillen-rational}).  We abuse notation and again denote the $\mathscr{I}$-adic filtration on $\mathbb{Q}_\ell[[\pi_1^\ell(X_{\bar k}; \bar x_1, \bar x_2)]]$ inherited from $\mathbb{Z}_\ell[[\pi_1^\ell(X_{\bar k}; \bar x_1, \bar x_2)]]$ by $\mathscr{I}^n$.
\end{enumerate}
\end{defn}
\begin{example}\label{noncommutative-power-series-example}
Suppose that $\pi_1^\ell(X_{\bar k}, \bar x)$ is a free pro-$\ell$ group on $m$ generators $\gamma_1, \cdots, \gamma_m$ (for example, if $X$ is an affine curve).  Then the map $\gamma_i\mapsto T_i+1$ induces isomorphisms $$\mathbb{Z}_\ell[[\pi_1^\ell(X_{\bar k}, \bar x)]]\overset{\sim}{\to} \mathbb{Z}_\ell\langle\langle T_1, \cdots, T_m\rangle\rangle,$$ $$\mathbb{Q}_\ell[[\pi_1^\ell(X_{\bar k}, \bar x)]]\overset{\sim}{\to} \mathbb{Q}_\ell\langle\langle T_1, \cdots, T_m\rangle\rangle,$$ where $R\langle\langle\cdots\rangle\rangle$ denotes the ring of noncommutative power series over $R$.  In both cases, $\mathscr{I}$ is the two-sided ideal generated by $T_1, \cdots, T_m$. (See e.g. \cite[$\S1$]{ihara3}.)
\end{example}

If $Y$ is a $k$-scheme, we define $$H_1(Y_{\bar k}, \mathbb{Z}_\ell):=\on{Hom}(H^1(Y_{\bar k}, \mathbb{Z}_\ell), \mathbb{Z}_\ell),$$ $$H_1(Y_{\bar k}, \mathbb{Q}_\ell):=\on{Hom}(H^1(Y_{\bar k}, \mathbb{Q}_\ell), \mathbb{Q}_\ell)$$ where $H^1(Y_{\bar k}, \mathbb{Z}_\ell)$ is $\ell$-adic cohomology, i.e.~the inverse limit of the $\mathbb{Z}/\ell^n\mathbb{Z}$-\'etale cohomology.     It is well-known that $$H_1(X_{\bar k}, \mathbb{Z}_\ell)\simeq\pi_1^{\ell}(X_{\bar k}, \bar x)^{\text{ab}}$$ canonically for any geometric point $\bar x$ of $X$.
\begin{prop}\label{abelianization-prop} Let $R=\mathbb{Z}_\ell$ or $\mathbb{Q}_\ell$, and let $\mathscr{I}(\bar x)$ be the augmentation ideal of $R[[\pi_1^\ell(X_{\bar k}, \bar x)]]$.  Then:
\begin{enumerate}
\item The map $g\mapsto g-1$ induces a (canonical) isomorphism $$H_1(X_{\bar k}, R)\simeq \pi_1^\ell(X_{\bar k}, \bar x)^{\on{ab}}\otimes_{\mathbb{Z}_\ell} R\overset{\sim}{\to} \mathscr{I}(\bar x)/\mathscr{I}(\bar x)^2.$$
\item Composition with any element of $\pi_1^\ell(X_{\bar k}; \bar x_1, \bar x_2)$ induces isomorphisms $$\mathscr{I}(\bar x_1)/\mathscr{I}(\bar x_1)^2\simeq \mathscr{I}(\bar x_1, \bar x_2)/\mathscr{I}(\bar x_1, \bar x_2)^2\simeq \mathscr{I}(\bar x_2)/\mathscr{I}(\bar x_2)^2.$$ These isomorphisms are independent of the choice of element.
\end{enumerate}
\end{prop}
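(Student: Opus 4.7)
The plan is to reduce both statements to the classical Fox-calculus identification $G^{\on{ab}} \simeq I/I^2$ for the augmentation ideal of a group ring, and then pass to inverse limits and change basepoint.

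For part (1), first work with an arbitrary group $G$ and commutative ring $R$. The map $g \mapsto g-1$ from $G$ to the augmentation ideal $I$ of $R[G]$ satisfies
\[
(gh-1) - (g-1) - (h-1) \;=\; (g-1)(h-1) \;\in\; I^2,
\]
so modulo $I^2$ it is an abelian-group homomorphism, hence descends to an $R$-linear map $G^{\on{ab}} \otimes_{\mathbb Z} R \to I/I^2$. An inverse is defined on $R$-module generators by $g-1 \mapsto [g]$, with the same identity showing well-definedness. Applying this to each continuous finite quotient of $\pi_1^\ell(X_{\bar k}, \bar x)$ and passing to the inverse limit gives the result for $R = \mathbb Z_\ell$; the $R = \mathbb Q_\ell$ case follows by tensoring. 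The identification with $H_1(X_{\bar k}, R)$ is then the standard Hurewicz-type identification $H_1(X_{\bar k}, \mathbb Z_\ell) \simeq \pi_1^\ell(X_{\bar k}, \bar x)^{\on{ab}}$ already recalled just above the proposition.

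For part (2), fix a path $\gamma \in \pi_1^\ell(X_{\bar k}; \bar x_1, \bar x_2)$. By the very definition of the $\mathscr{I}$-adic filtration on $\mathbb Z_\ell[[\pi_1^\ell(X_{\bar k}; \bar x_1, \bar x_2)]]$, right multiplication by $\gamma$ is a left $\mathbb Z_\ell[[\pi_1^\ell(X_{\bar k}, \bar x_1)]]$-module isomorphism sending $\mathscr{I}(\bar x_1)^n$ bijectively onto $\mathscr{I}(\bar x_1, \bar x_2)^n$; the symmetric argument with left multiplication gives the second isomorphism.

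The main (mild) obstacle is independence of the choice of path. If $\gamma' = g\gamma$ for some $g \in \pi_1^\ell(X_{\bar k}, \bar x_1)$, then for any $x \in \mathscr{I}(\bar x_1)$,
\[
x\gamma' - x\gamma \;=\; x(g-1)\gamma,
\]
and since both $x$ and $g - 1$ lie in $\mathscr{I}(\bar x_1)$, their product lies in $\mathscr{I}(\bar x_1)^2$; hence $x\gamma' \equiv x\gamma \pmod{\mathscr{I}(\bar x_1, \bar x_2)^2}$, which gives the independence. A last small verification needed throughout is that the Fox-calculus isomorphism for discrete groups passes to the pro-$\ell$ completion and to the completed group rings $\mathbb Z_\ell[[\pi_1^\ell]]$; this is formal, using that both $(-)^{\on{ab}} \otimes_{\mathbb Z_\ell} R$ and $I/I^2$ commute with the relevant cofiltered inverse limits, and that $H_1(X_{\bar k}, \mathbb Z_\ell)$ is a finitely generated $\mathbb Z_\ell$-module.
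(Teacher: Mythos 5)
Your proposal is correct and follows essentially the same route as the paper: part (1) is the standard identification $G^{\on{ab}}\otimes R \simeq \mathscr{I}/\mathscr{I}^2$ via $g\mapsto g-1$ (which the paper simply cites as a standard fact about pro-$\ell$ groups), and your independence argument in part (2), writing $x\gamma' - x\gamma = x(g-1)\gamma \in \mathscr{I}^2$, is the same computation the paper performs in the form $p_1-p_2=(1-p_2p_1^{-1})\cdot p_1$. No issues.
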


\begin{proof}

\begin{enumerate}
\item This is a standard fact about pro-$\ell$ groups. 
\item The inverse morphism is given by composition with the inverse element, in $\pi_1^\ell(X_{\bar k}; \bar x_2, \bar x_1).$  Independence follows from the fact that if $p_1, p_2$ are two elements of $\pi_1^\ell(X_{\bar k}; \bar x_1, \bar x_2)$, then $p_1-p_2\in \mathscr{I}$, as $p_1-p_2=(1-p_2p_1^{-1})\cdot p_1$.  So if $x\in \mathscr{I}$, $x\cdot (p_1-p_2)\in \mathscr{I}^2$, and hence is zero in $\mathscr{I}/\mathscr{I}^2$.
\end{enumerate}
\end{proof}

\subsection{The weight filtration}
Let $\bar x$ be a geometric point of $X$, and $R=\mathbb{Z}_\ell$ or $R=\mathbb{Q}_\ell$. Let $$\mathscr{J}(\bar x)=\on{ker}(R[[\pi_1^\ell(X_{\bar k}, \bar x)]]\to R[[\pi_1^\ell(\overline{X}_{\bar k}, \bar x)]]),$$ where the morphism above is induced by the open embedding $X\hookrightarrow \overline{X}$.  We now define the weight filtration $W^\bullet$ on $R[[\pi_1^\ell(X_{\bar k}, \bar x)]]$.  This is an increasing, multiplicative filtration indexed by nonpositive integers.
\begin{defn} For $R=\mathbb{Q}_\ell$:
\begin{itemize}
\item $W^i=\mathbb{Q}_\ell[[\pi_1^\ell(X_{\bar k}, \bar x)]]$ for $i\geq 0$;
\item $W^{-1}=\mathscr{I}(\bar x)$; $W^{-2}=\mathscr{I}(\bar x)^2+\mathscr{J}$;
\item $W^{-i}=\sum_{a+b=i, a,b>0} W^{-a}\cdot W^{-b}$ for $i>2$.
\end{itemize}
For $R=\mathbb{Z}_\ell$, let $\iota: \mathbb{Z}_\ell[[\pi_1^\ell(X_{\bar k}, \bar x)]]\to \mathbb{Q}_\ell[[\pi_1^\ell(X_{\bar k}, \bar x)]]$ be the natural map, and set $W^i\mathbb{Z}_\ell[[\pi_1^\ell(X_{\bar k}, \bar x)]]=\iota^{-1}(W^i\mathbb{Q}_\ell[[\pi_1^\ell(X_{\bar k}, \bar x)]]).$
\end{defn}
\begin{remark}
One would obtain the same filtration by defining $$W^{-i}:= \mathscr{I}(\bar x)\cdot W^{-i+1}+\mathscr{J}(\bar x)\cdot W^{-i+2}$$ but the definition above will make several proofs easier; we will not need the equivalence between these two definitions.
\end{remark}
\begin{prop}\label{W-adic-topology-prop}
$\mathscr{I}^n\subset W^{-n}$ and $W^{-2n-1}\subset \mathscr{I}^n.$  
In particular, the $W^\bullet$-adic topology is the same as the $\mathscr{I}$-adic topology.
\end{prop}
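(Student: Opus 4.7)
The plan is to establish the two stated inclusions by separate inductions; the equivalence of topologies is then immediate since each inclusion says the identity map is continuous in one direction. I would focus on the case $R = \mathbb{Q}_\ell$; the case $R = \mathbb{Z}_\ell$ follows by pulling back through $\iota$, using the definition $W^{-i}\mathbb{Z}_\ell[[\pi_1^\ell(X_{\bar k},\bar x)]] = \iota^{-1}(W^{-i}\mathbb{Q}_\ell[[\pi_1^\ell(X_{\bar k},\bar x)]])$ together with the containment $\iota(\mathscr{I}_{\mathbb{Z}}^n) \subseteq \mathscr{I}_{\mathbb{Q}}^n$.

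For $\mathscr{I}^n \subset W^{-n}$, I would induct on $n$. The cases $n = 1$ and $n = 2$ are immediate from the definitions $W^{-1} = \mathscr{I}$ and $W^{-2} = \mathscr{I}^2 + \mathscr{J}$. For $n \geq 3$, write $\mathscr{I}^n = \mathscr{I}\cdot \mathscr{I}^{n-1} \subset W^{-1} \cdot W^{-(n-1)} \subset W^{-n}$, the last inclusion being a special case of the recursive definition $W^{-n} = \sum_{a+b=n,\,a,b>0} W^{-a}\cdot W^{-b}$.

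For $W^{-2n-1} \subset \mathscr{I}^n$, I would prove the cleaner and slightly stronger statement $W^{-i} \subset \mathscr{I}^{\lceil i/2\rceil}$ for every $i \geq 1$, by induction on $i$. The case $i = 1$ is $W^{-1} = \mathscr{I}$. The case $i = 2$ rests on the simple but essential observation that $\mathscr{J} \subseteq \mathscr{I}$: the augmentation on $R[[\pi_1^\ell(X_{\bar k},\bar x)]]$ factors through that on $R[[\pi_1^\ell(\overline{X}_{\bar k},\bar x)]]$, since both send each group element to $1$. Hence $W^{-2} = \mathscr{I}^2 + \mathscr{J} \subseteq \mathscr{I}$. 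For $i \geq 3$, each summand $W^{-a}\cdot W^{-b}$ in the recursive definition lies in $\mathscr{I}^{\lceil a/2\rceil + \lceil b/2\rceil}$ by the inductive hypothesis, and the inequality $\lceil a/2\rceil + \lceil b/2\rceil \geq \lceil(a+b)/2\rceil$ (which follows from $\lceil x\rceil + \lceil y\rceil \geq x + y$ and integrality of the left-hand side) completes the induction. Applying this with $i = 2n+1$ yields $W^{-(2n+1)} \subset \mathscr{I}^{n+1} \subset \mathscr{I}^n$. There is no serious obstacle; the only genuinely nontrivial ingredient is the observation $\mathscr{J}\subseteq \mathscr{I}$ in the base case, and the rest is bookkeeping on the recursive definition.
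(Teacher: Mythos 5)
Your proof is correct and follows essentially the same route as the paper, whose one-line argument is exactly the double induction using $\mathscr{I}^2\subset\mathscr{I}^2+\mathscr{J}\subset\mathscr{I}$; your strengthened statement $W^{-i}\subset\mathscr{I}^{\lceil i/2\rceil}$ is just a clean way of organizing that induction. No issues.
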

\begin{proof}
 Both inclusions follow by induction on $n$; one uses that $\mathscr{I}^2\subset\mathscr{I}^2+\mathscr{J}\subset \mathscr{I}$.
\end{proof}
Now let $x\in X(k)$ be a rational point of $X$, and $\bar x$ the associated geometric point (given by our choice of algebraic closure of $k$), so that $G_k:=\on{Gal}(\bar k/k)$ acts naturally on $\pi_1^\ell(X_{\bar k}, \bar x)$.  The main theorem of this section is:
\begin{theorem}\label{quasi-scalar-pi1}
For all $\alpha\in \mathbb{Z}_\ell^\times$ sufficiently close to $1$, there exists $\sigma_\alpha\in G_k$ such that, for all $i$, $\sigma_\alpha$ acts on $\on{gr}^{-i}_{W}\mathbb{Q}_\ell[[\pi_1^\ell(X_{\bar k}, \bar x)]]$ via  $\alpha^i\cdot \on{Id}$.
\end{theorem}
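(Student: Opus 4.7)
The plan is to bootstrap a scalar action on the abelianization-type subquotients up through the weight filtration, using the multiplicative structure of $W^\bullet$ together with the semisimplicity input Theorem~\ref{quasi-scalar-semisimple} to rigidify at the base case.

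\emph{Base case on weights $-1$ and $-2$.} By Proposition~\ref{abelianization-prop} and the definition of $\mathscr{J}$, one identifies
\[
\on{gr}^{-1}_W = \mathscr{I}/(\mathscr{I}^2 + \mathscr{J}) \cong H_1(\overline X_{\bar k}, \mathbb{Q}_\ell),
\]
and obtains a short exact sequence
\[
0 \to \mathscr{I}^2/W^{-3} \to \on{gr}^{-2}_W \to \mathscr{J}/(\mathscr{J} \cap \mathscr{I}^2) \to 0,
\]
whose sub is a quotient of $(\mathscr{I}/\mathscr{I}^2)^{\otimes 2}$ (via multiplication in the algebra) and whose quotient embeds in $H_1(X_{\bar k})$ as a quotient of $\bigoplus_{D_i}\mathbb{Q}_\ell(1)$, one copy per irreducible boundary divisor. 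I would then invoke Lemma~\ref{quasi-scalar-h1} to produce, for $\alpha \in \mathbb{Z}_\ell^\times$ in a sufficiently small neighborhood of $1$, an element $\sigma_\alpha \in G_k$ which acts as $\alpha \cdot \on{Id}$ on $H_1(\overline X_{\bar k}, \mathbb{Q}_\ell)$ and has cyclotomic character $\chi_\ell(\sigma_\alpha) = \alpha^2$. Since $G_k$ acts by algebra automorphisms on $\mathbb{Q}_\ell[[\pi_1^\ell(X_{\bar k}, \bar x)]]$, this forces $\sigma_\alpha$ to act as $\alpha^2$ on both the sub (by multiplicativity: $\alpha \otimes \alpha$) and the quotient (by the value of $\chi_\ell$); a priori only $(\sigma_\alpha - \alpha^2)^2 = 0$ on $\on{gr}^{-2}_W$, but Theorem~\ref{quasi-scalar-semisimple} gives semisimplicity of the action, and a semisimple nilpotent operator is zero, so $\sigma_\alpha = \alpha^2 \cdot \on{Id}$ on $\on{gr}^{-2}_W$.

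\emph{Inductive step for $i \ge 3$.} The multiplicative definition $W^{-i} = \sum_{a+b=i,\, a,b>0} W^{-a} W^{-b}$ descends to
\[
\on{gr}^{-i}_W = \sum_{a+b=i,\, a,b>0} \on{gr}^{-a}_W \cdot \on{gr}^{-b}_W.
\]
If $\sigma_\alpha$ acts as $\alpha^a$ on $\on{gr}^{-a}_W$ and as $\alpha^b$ on $\on{gr}^{-b}_W$, it acts as $\alpha^{a+b} = \alpha^i$ on their product in $\on{gr}^{-i}_W$, since the Galois action is by algebra maps. Each $\on{gr}^{-i}_W$ is finite-dimensional (via Proposition~\ref{W-adic-topology-prop}, $W^\bullet$ is commensurable with the $\mathscr{I}$-adic filtration, whose graded pieces are finite-dimensional), so every element is a finite sum of $\alpha^i$-eigenvectors, hence itself an $\alpha^i$-eigenvector. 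This completes the induction.

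\emph{Main obstacle.} The substantive step is Lemma~\ref{quasi-scalar-h1}: producing a single Galois element that simultaneously scales $H_1(\overline X_{\bar k}, \mathbb{Q}_\ell)$ and has a prescribed cyclotomic character is where the Sawin/Bogomolov-type input is essential, and it explains why $\alpha$ must be restricted to a neighborhood of $1$. The other nontrivial input is the semisimplicity result Theorem~\ref{quasi-scalar-semisimple}, without which we cannot rigidify the scalar action across the extension in $\on{gr}^{-2}_W$. Everything beyond is formal manipulation of the multiplicative weight filtration.
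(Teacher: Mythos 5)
Your proposal is correct in outline and rests on exactly the two inputs the paper uses (Lemma \ref{quasi-scalar-h1} and Theorem \ref{quasi-scalar-semisimple}), followed by the same multiplicativity bootstrap; the paper merely organizes this differently, extracting from (the proof of) Theorem \ref{quasi-scalar-semisimple} a $\sigma_\alpha$-equivariant splitting $s$ of $\mathscr{I}/\mathscr{I}^n\to\mathscr{I}/\mathscr{I}^2$ and then concluding in one stroke from the surjection $\bigoplus_i(\mathscr{I}/\mathscr{I}^2)^{\otimes i}\to\mathbb{Q}_\ell[[\pi_1^\ell(X_{\bar k},\bar x)]]/\mathscr{I}^n$, rather than inducting on graded pieces. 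One step in your base case needs repair: you justify the eigenvalue $\alpha^2$ on the sub $\mathscr{I}^2/W^{-3}$ of $\on{gr}^{-2}_W$ by ``multiplicativity: $\alpha\otimes\alpha$,'' but $\sigma_\alpha$ does \emph{not} act by $\alpha$ on $\mathscr{I}/\mathscr{I}^2\cong H_1(X_{\bar k},\mathbb{Q}_\ell)$ when $X$ is non-proper; it acts by $\alpha$ only on the weight $-1$ part and by $\alpha^2$ on the weight $-2$ part $V_2$, so a priori $(\mathscr{I}/\mathscr{I}^2)^{\otimes 2}$ carries eigenvalues $\alpha^2,\alpha^3,\alpha^4$. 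What saves the argument is that any lift of $V_2$ to $\mathscr{I}$ lies in $\mathscr{J}+\mathscr{I}^2=W^{-2}$, so every product with a $V_2$-factor lands in $W^{-2}\cdot\mathscr{I}+\mathscr{I}\cdot W^{-2}\subset W^{-3}$ and dies in $\mathscr{I}^2/W^{-3}$; hence the sub is genuinely a quotient of $(\on{gr}^{-1}_W H_1)^{\otimes 2}$ and the eigenvalue is $\alpha^2$ as claimed. With that point made explicit (and noting that when $V_2=0$ the quotient term of your extension vanishes, which is exactly the case where the identity $\chi_\ell(\sigma_\alpha)=\alpha^2$ is neither available from Lemma \ref{quasi-scalar-h1} nor needed), your induction goes through and matches the paper's argument in substance.
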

Before giving the proof, we need several lemmas.

\begin{lemma}\label{H1-semisimple}
Let $F$ be a finite field, let $Y/F$ be a smooth, geometrically connected variety, and let $\overline{Y}$ be a smooth compactification of $Y$ with simple normal crossings boundary.  Then $G_F$ acts semi-simply on the $\ell$-adic cohomology group $H^1(Y_{\bar F}, \mathbb{Q}_\ell).$  Furthermore, this $G_F$-representation is mixed of weights $1$ and $2$, with the weight $1$ piece given by the image of the  natural map $$H^1(\overline{Y}_{\bar F}, \mathbb{Q}_\ell)\to H^1(Y_{\bar F}, \mathbb{Q}_\ell).$$
\end{lemma}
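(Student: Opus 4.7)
The plan is to reduce the statement to the excision long exact sequence for the open immersion $j\colon Y\hookrightarrow \overline Y$, combined with Tate's theorem on semi-simplicity of Frobenius on abelian varieties over finite fields. First, I would invoke the Leray spectral sequence for $j$ (equivalently the Gysin sequence); since $D=\overline Y\setminus Y$ is simple normal crossings, the sheaf $R^1 j_*\mathbb{Q}_\ell$ decomposes as $\bigoplus_i (\iota_i)_*\mathbb{Q}_\ell(-1)$, where the $\iota_i\colon D_i\hookrightarrow \overline Y$ range over inclusions of the geometric irreducible components of $D$. This produces an exact sequence of $G_F$-modules
\[
0\to H^1(\overline Y_{\bar F},\mathbb{Q}_\ell)\to H^1(Y_{\bar F},\mathbb{Q}_\ell)\to \bigoplus_i H^0(D_{i,\bar F},\mathbb{Q}_\ell)(-1)\to H^2(\overline Y_{\bar F},\mathbb{Q}_\ell).
\]

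Second, I would read off the weights using the Weil conjectures. The term $H^1(\overline Y_{\bar F},\mathbb{Q}_\ell)$ is pure of weight $1$ by Deligne, and each $H^0(D_{i,\bar F},\mathbb{Q}_\ell)(-1)\cong \mathbb{Q}_\ell(-1)$ is pure of weight $2$; hence $H^1(Y_{\bar F},\mathbb{Q}_\ell)$ is an extension of a sub-representation of weight $2$ by one of weight $1$, and the weight $1$ part coincides by construction with the image of $H^1(\overline Y_{\bar F},\mathbb{Q}_\ell)$, as asserted.

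Third, I would deduce semi-simplicity in two steps. Because the Frobenius eigenvalues in weights $1$ and $2$ have distinct absolute values ($\sqrt q$ versus $q$), the sub and quotient share no eigenvalues; by a standard generalized-eigenspace decomposition over $\overline{\mathbb{Q}_\ell}$, the extension is automatically $G_F$-split. It therefore suffices to check that each graded piece is semisimple. The weight $2$ piece sits inside $\bigoplus_i \mathbb{Q}_\ell(-1)$, on which Frobenius acts as $q\cdot \mathrm{Id}$, so semi-simplicity is obvious. For the weight $1$ piece $H^1(\overline Y_{\bar F},\mathbb{Q}_\ell)$, I would identify it with the dual of the Tate module $V_\ell(\mathrm{Alb}(\overline Y))$ of the Albanese of $\overline Y$ and invoke Tate's theorem on semi-simplicity of Frobenius on Tate modules of abelian varieties over finite fields.

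The main obstacle is this last step: semi-simplicity of Frobenius on $H^1$ of a smooth projective variety over $\mathbb{F}_q$ is genuinely nontrivial (it is open in higher cohomological degrees!), but in degree one it reduces cleanly to Tate's theorem via the Albanese. Everything else is the formal bookkeeping of weights, Tate twists, and the disjointness of eigenvalue supports across distinct weights.
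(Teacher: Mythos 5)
Your proof is correct and follows essentially the same route as the paper's: the Leray/Gysin sequence for $j\colon Y\hookrightarrow\overline Y$, the weight disparity (1 versus 2) to split the extension canonically, and Tate's semi-simplicity theorem for the weight-one piece $H^1(\overline Y_{\bar F},\mathbb{Q}_\ell)$. The only difference is cosmetic: the paper simply cites Tate for semi-simplicity on $H^1$ of the smooth compactification, whereas you spell out the reduction via the Albanese (and your remark that Frobenius acts as $q\cdot\mathrm{Id}$ on the weight-two term should really be ``$q$ times a permutation of the geometric components,'' which is still visibly semisimple).
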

\begin{proof}
Let $j: Y\to \overline{Y}$ be the embedding.  Let $E_1, \cdots, E_n$ be the components of $\overline{Y}\setminus Y$.  Then the Leray spectral sequence for $Rj_*$ (see e.g.~\cite[6.2]{delignehodgei}) gives $$0\to H^1(\overline{Y}_{\bar F}, \mathbb{Q}_\ell)\to H^1(Y_{\bar F}, \mathbb{Q}_\ell)\to \bigoplus_{i\in \{1, \cdots, n\}} H^0(E_{i, \bar F}, \mathbb{Q}_\ell)\otimes \mathbb{Q}_\ell(-1)\to H^2(\overline{Y}_{\bar F}, \mathbb{Q}_\ell)\to \cdots$$
Now $H^1(\overline{Y}_{\bar F}, \mathbb{Q}_\ell)$ is pure of weight $1$ by the Weil conjectures, and $G_F$ acts on it semisimply \cite{tate2}.  On the other hand, let $$V=\ker\left(\bigoplus_{i\in \{1, \cdots, n\}} H^0(E_{i, \bar F}, \mathbb{Q}_\ell)\otimes \mathbb{Q}_\ell(-1)\to H^2(\overline{Y}_{\bar F}, \mathbb{Q}_\ell)\right).$$  $V$ is manifestly pure of weight $2$, with semisimple $G_F$-action.  But the short exact sequence $$0\to  H^1(\overline{Y}_{\bar F}, \mathbb{Q}_\ell)\to H^1(Y_{\bar F}, \mathbb{Q}_\ell)\to V\to 0$$ splits (canonically), as the first and last term have different weights.
\end{proof}

We now prove the following lemma, suggested to us by Will Sawin after reading an earlier draft of this paper --- it is closely analogous to results of Serre, Bogomolov, and Deligne (see e.g.~\cite[Corollaire 1]{bogomolov}).  It is an analogue of Theorem \ref{quasi-scalar-pi1}, but for $H_1(X_{\bar k}, \mathbb{Q}_\ell)$ rather than $\mathbb{Q}_\ell[[\pi_1^\ell(X_{\bar k}, \bar x)]]$.

\begin{lemma}\label{quasi-scalar-h1}
For all $\alpha\in \mathbb{Q}_\ell^\times$ sufficiently close to $1$, there exists $\sigma_\alpha\in G_k$ such that $\sigma_\alpha$ acts on $\on{gr}^{-i}_W H_1(X_{\bar k}, \mathbb{Q}_\ell)$ via multiplication by $\alpha^i$.  Here $W^{i}H_1(X_{\bar k}, \mathbb{Q}_\ell)=H_1(X_{\bar k}, \mathbb{Q}_\ell)$ for $i\geq -1$, $$W^{-2}H_1(X_{\bar k}, \mathbb{Q}_\ell)=\on{ker}(H_1(X_{\bar k}, \mathbb{Q}_\ell)\to H_1(\overline{X}_{\bar k}, \mathbb{Q}_\ell)),$$ and $W^iH_1(X_{\bar k}, \mathbb{Q}_\ell)=0$ for $i<-2$.
\end{lemma}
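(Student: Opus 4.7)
The strategy is to identify the two graded pieces of $W^\bullet$ on $H_1(X_{\bar k},\mathbb{Q}_\ell)$ explicitly, and then construct $\sigma_\alpha$ by combining Bogomolov's homothety theorem on the Albanese with the Weil pairing. First I would dualize the Leray sequence for $j\colon X\hookrightarrow\overline X$ used in the proof of Lemma~\ref{H1-semisimple}; this sequence comes from $Rj_*\mathbb{Q}_\ell$ and makes sense over the base $k$, giving a short exact sequence of $G_k$-modules
$$0\to V^\vee\to H_1(X_{\bar k},\mathbb{Q}_\ell)\to H_1(\overline X_{\bar k},\mathbb{Q}_\ell)\to 0,$$
with $V^\vee$ a $G_k$-stable subspace of $\bigoplus_i H^0(D_{i,\bar k},\mathbb{Q}_\ell)\otimes\mathbb{Q}_\ell(1)$. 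After passing to a finite extension $k''/k$ over which every geometric component of $D$ is defined, the ambient representation becomes scalar via $\chi_\ell$, so $V^\vee\cong\mathbb{Q}_\ell(1)^{\oplus m}$ as a $G_{k''}$-representation, while $H_1(\overline X_{\bar k},\mathbb{Q}_\ell)=V_\ell A$ for $A=\on{Pic}^0(\overline X/k)$. By the definition of $W^\bullet$ in the statement, this identifies $\on{gr}^{-1}_W H_1=V_\ell A$ and $\on{gr}^{-2}_W H_1=V^\vee$.

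Next I would apply Bogomolov's theorem \cite{bogomolov} to the abelian variety $A/k$: the Lie algebra of the image of $G_k\to\on{GL}(V_\ell A)$ contains the homotheties, so the image meets an open neighborhood of the identity in the scalar subgroup $\mathbb{Z}_\ell^\times\subset\on{GL}(V_\ell A)$. After raising to a power $M$ to land in $G_{k''}$, we obtain an open neighborhood $U\subset\mathbb{Z}_\ell^\times$ of $1$ such that for every $\alpha\in U$ there is some $\sigma_\alpha\in G_{k''}\subset G_k$ acting as $\alpha\cdot\on{Id}$ on $V_\ell A=\on{gr}^{-1}_W H_1$ (using surjectivity of $x\mapsto x^M$ on a small enough neighborhood of $1$). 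To obtain the correct action on $\on{gr}^{-2}_W H_1$, I would use that any polarization of $A$ (e.g., from an ample class on $\overline X$) gives a nondegenerate $G_k$-equivariant alternating pairing $V_\ell A\otimes V_\ell A\to\mathbb{Q}_\ell(1)$; if $\sigma_\alpha$ acts as $\alpha$ on $V_\ell A$, it acts as $\alpha^2$ on the image $\mathbb{Q}_\ell(1)$, forcing $\chi_\ell(\sigma_\alpha)=\alpha^2$. Since $V^\vee$ is a sum of copies of $\mathbb{Q}_\ell(1)$ as a $G_{k''}$-representation, $\sigma_\alpha$ therefore acts on $\on{gr}^{-2}_W H_1$ as $\alpha^2\cdot\on{Id}$.

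The degenerate case $A=0$ (e.g., $\overline X$ rational) is handled directly by noting that $\chi_\ell(G_k)$ is open in $\mathbb{Z}_\ell^\times$ (since $k$ is finitely generated over $\mathbb{Q}$), so we can simply choose $\sigma_\alpha$ with $\chi_\ell(\sigma_\alpha)=\alpha^2$. The principal obstacle is the invocation of Bogomolov's homothety theorem over a finitely generated base field, rather than over a number field as in the original paper; one must either extend the original Lie-algebra argument or reduce to the number field case by spreading $A$ out over a finitely generated $\mathbb{Q}$-subalgebra of $k$ and specializing at a closed point (whose residue field is automatically a number field, since finitely generated $\mathbb{Q}$-algebras are Jacobson). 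Everything else is formal: the Albanese controls the weight $-1$ piece, and the Weil pairing forces the compatibility between the scalar action there and the cyclotomic action on the weight $-2$ piece.
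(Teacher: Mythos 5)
Your argument is correct, but it takes a genuinely different route from the paper's. The paper analyzes the full mixed representation $\rho\colon G_k\to GL(H_1(X_{\bar k},\mathbb{Q}_\ell))$ at once: it first exhibits the torus $T'$ of operators acting by $\alpha$ on $\on{gr}^{-1}_W$ and by $\alpha^2$ on $\on{gr}^{-2}_W$ \emph{inside the Zariski closure} of $\on{im}(\rho)$, by diagonalizing a Frobenius element and using the archimedean absolute values of its eigenvalues (i.e.\ the weights) to show that the torus generated by that Frobenius contains $T'$; it then quotes Bogomolov's openness theorem (\cite[Th\'eor\`eme 1]{bogomolov}) for $\on{im}(\rho)$ in its Zariski closure, which forces a verification that the mixed representation is Hodge--Tate (via \cite{kisin}) together with a specialization argument to reduce from finitely generated fields to number fields. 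You instead treat the two graded pieces separately: the homothety theorem (\cite[Corollaire 1]{bogomolov}, applied to the Albanese of $\overline X$) produces $\sigma_\alpha$ acting by $\alpha$ on $\on{gr}^{-1}_W$, and the Weil pairing of a polarization defined over the base field then forces $\chi(\sigma_\alpha)=\alpha^2$, which is precisely the action on $\on{gr}^{-2}_W$ after a finite extension splitting the boundary components. Since the lemma only concerns the graded pieces (and any extension between eigenvalues $\alpha\neq\alpha^2$ splits equivariantly anyway), this suffices, and it buys you something: you never need to know that the mixed representation is de Rham, so the appeal to \cite{kisin} disappears, as does the Frobenius--torus computation. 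What you give up is generality --- you lean on the abelian-variety-specific homothety statement rather than the general openness theorem --- and you still need the same specialization step (Serre's letter to Ribet, \cite{serre}) to transport Bogomolov's theorem from number fields to finitely generated fields; your sketch of that step is the right one, the point being that the image of the specialized representation sits inside the image of the generic one because both factor through $\pi_1$ of the integral model, onto which $G_k$ surjects. Two cosmetic corrections: $H_1(\overline X_{\bar k},\mathbb{Q}_\ell)$ is the Tate module of the Albanese (the dual of $\on{Pic}^0$), which is harmless since the two are isogenous and the homothety statement is isogeny-invariant; and $\on{gr}^{-2}_W$ is naturally a quotient, not a subspace, of $\bigoplus_i H^0(D_{i,\bar k},\mathbb{Q}_\ell)\otimes\mathbb{Q}_\ell(1)$, which again changes nothing since either way $G_{k''}$ acts on it through $\chi$.
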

\begin{proof}
The proof proceeds in two steps.  First we show that there exists a $\sigma_\alpha$ as desired in the Zariski-closure of the image of the natural Galois representation $$\rho: G_k\to GL(H_1(X_{\bar k}, \mathbb{Q}_\ell)).$$ Second, we use the fact that the image of $\rho$ is open in its Zariski-closure to conclude.

\emph{Step 1.}  We first show that for \emph{any} $\alpha\in \mathbb{Q}_\ell^\times$, there exists an element $\gamma_\alpha$ of $\overline{\on{im}(\rho)}$ preserving $W^\bullet H_1(X_{\bar k}, \mathbb{Q}_\ell)$ and acting on $\on{gr}^{-i}_WH_1(X_{\bar k}, \mathbb{Q}_\ell)$ via $\alpha^i$.  Here $\overline{\on{im}(\rho)}$ is the Zariski-closure of the image of $\rho$.

The  filtration $W^\bullet$ in the statement of the lemma agrees with the weight filtration of Deligne (see e.g.~\cite[6]{delignehodgei}), so in particular there exist (many) Frobenii $\gamma$ in $G_k$ such that $\gamma$ acts with weight $i$ on $\on{gr}^{i}_WH_1(X_{\bar k}, \mathbb{Q}_\ell)$.  (Recall that this means that this means that if $\lambda$ is a generalized eigenvalues of $\gamma$ acting on $\on{gr}^i_W$, then $\lambda$ is an algebraic number, and $|\lambda|=q^{-i/2}$ for any embedding of $\mathbb{Q}(\lambda)$ into $\mathbb{C}$.) Choose such a $\gamma$.  

There is a short exact sequence $$0\to \on{gr}_W^{-2}H_1(X_{\bar k}, \mathbb{Q}_\ell)\to H_1(X_{\bar k}, \mathbb{Q}_\ell)\to \on{gr}_W^{-1}H_1(X_{\bar k}, \mathbb{Q}_\ell)\to 0.$$  Because $\gamma$ acts on $\on{gr}^{-i}_W$ with weight $-i$, there is a canonical $\gamma$-equivariant splitting of this sequence, so we have a canonical $\gamma$-equivariant isomorphism \begin{equation}\label{H1-splitting}\on{gr}_W^{-1}H_1(X_{\bar k}, L)\oplus \on{gr}_W^{-2}H_1(X_{\bar k}, L)\simeq H_1(X_{\bar k}, L).\end{equation}  

By Lemma \ref{H1-semisimple}, there exists a finite extension $L$ of $\mathbb{Q}_\ell$ so that $\gamma$ acts diagonalizably on $H_1(X_{\bar k}, L)$.  Choose a basis $\{e_i\}$ of $\gamma$-eigenvectors of $H_1(X_{\bar k}, L)$ adapted to splitting \ref{H1-splitting}, so that $e_1, \cdots, e_r$ forms a basis of $\on{gr}^{-2}_W$, and $e_{r+1}, \cdots, e_{m}$ forms a basis of $\on{gr}^{-1}_W.$ By the definition of weights, we have $$\gamma\cdot e_i=\lambda_i e_i,$$ with $|\lambda_i|=q$ if $1\leq i\leq r$ and $|\lambda_i|=q^{1/2}$ otherwise, where $|\cdot|$ is defined via any embedding of $\mathbb{Q}(\lambda_1, \cdots, \lambda_m)$ into $\mathbb{C}$.  Now the identity component $T$ of $\overline{\{\gamma^n\}_{n\in \mathbb{Z}}}$ is a subtorus of the diagonal torus $D$ of $GL(H_1(X_{\bar k}, L))$ (in the basis $\{e_i\}$).  

Let $X^*(D), X^*(T)$ be the character lattices of $D, T$ respectively; identify $X^*(D)\simeq \mathbb{Z}^m$ via the basis $\{e_i\}$.  The inclusion $T\hookrightarrow D$ induces a surjection $X^*(D)\twoheadrightarrow X^*(T)$, with kernel $K$ given by $\underline{a}\in \mathbb{Z}^m$ such that $$\prod_{i=1}^m \lambda_i^{a_i}=1.$$ $T$ is precisely the torus cut out by the characters in $K$, i.e.~the subtorus given by diagonal matrices $M$ such that $\chi(M)=1$ for all $\chi\in K$. But in particular, this holds for the matrices $$q^a\cdot\on{Id}_{\on{gr}^{-1}_W}~\oplus ~q^{2a} \cdot\on{Id}_{\on{gr}^{-2}_W},~ a\in\mathbb{Z}$$ by the multiplicativity of absolute values.  Hence these matrices are in $T$, which thus contains their Zariski-closure, the entire torus $T'$ $$\alpha\cdot\on{Id}_{\on{gr}^{-1}_W}~\oplus ~\alpha^{2} \cdot\on{Id}_{\on{gr}^{-2}_W},~ \alpha\in\mathbb{Q}_\ell^\times.$$  As the splitting \ref{H1-splitting} is defined over $\mathbb{Q}_\ell$, this torus is in fact in $\overline{\on{im}(\rho)}$, as desired. 

\emph{Step 2.}  Now we demonstrate that $\on{im}(\rho)$ is open in $\overline{\on{im}(\rho)}$, and use this fact to conclude the proof of the lemma.  In the case $k$ is a number field, this follows from the main result of \cite[Th\'{e}or\`{e}me 1]{bogomolov}, once we verify that for all places $\mathfrak{l}$ of $k$ lying over $\ell$, the restriction of $\rho$ to the decomposition group at $\mathfrak{l}$ is Hodge-Tate.  But in fact this representation is de Rham, hence Hodge-Tate, by the main result of \cite{kisin}.  For $k$ an arbitrary finitely-generated field of characteristic zero, we may reduce to the number field case by the argument of \cite[letter to Ribet of 1/1/1981, \S 1]{serre}.

To conclude, we observe that $\on{im}(\rho)\cap T'$ is thus open and non-empty, and in particular contains a non-empty open neighborhood of $1\in T'$.  This is exactly what we wanted to prove.
\end{proof}
\subsection{Semisimplicity}
The purpose of this subsection is to deduce Theorem \ref{quasi-scalar-pi1} from Lemma \ref{quasi-scalar-h1}.  Before doing so, we will need a result (which may be of independent interest) proving that certain elements of $G_k$ act semisimply on $\mathbb{Q}_\ell[[\pi_1^\ell(X_{\bar k}, \bar x)]]$.

\begin{theorem}\label{quasi-scalar-semisimple}
Let $\sigma_\alpha$ be as in Lemma \ref{quasi-scalar-h1}, with $\alpha$ not a root of unity; let $x$ be a rational point of $X$.  Then $\sigma_\alpha$ acts semisimply on $\mathbb{Q}_\ell[[\pi_1^\ell(X_{\bar k}, \bar x)]]/\mathscr{I}^n$ for all $n$.
\end{theorem}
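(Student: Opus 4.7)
The plan is to prove Theorem \ref{quasi-scalar-semisimple} by simultaneously establishing the key ingredient of Theorem \ref{quasi-scalar-pi1}: that $\sigma_\alpha$ acts as the scalar $\alpha^i \cdot \on{Id}$ on each graded piece $\on{gr}^{-i}_W(\mathbb{Q}_\ell[[\pi_1^\ell(X_{\bar k}, \bar x)]]/\mathscr{I}^n)$. Granting this, and using that $\alpha$ is not a root of unity so that the values $\{\alpha^i\}_{i \geq 0}$ are pairwise distinct, one observes that the weight filtration on $V_n := \mathbb{Q}_\ell[[\pi_1^\ell(X_{\bar k}, \bar x)]]/\mathscr{I}^n$ is finite (by Proposition \ref{W-adic-topology-prop}, since $W^{-2n-1} \subset \mathscr{I}^n$). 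The generalized $\alpha^i$-eigenspace decomposition of $V_n$ for $\sigma_\alpha$ refines $W^\bullet V_n$ canonically and provides a $\sigma_\alpha$-equivariant splitting of the filtration, with $\sigma_\alpha$ acting by the scalar $\alpha^i$ on each piece. Thus $\sigma_\alpha$ is diagonalizable on $V_n$.

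For the scalar claim on $\on{gr}^{-i}_W$ I would induct on $i$, exploiting the multiplicativity of $W^\bullet$. The base case $i=1$ is essentially immediate: $\on{gr}^{-1}_W = \mathscr{I}/(\mathscr{I}^2 + \mathscr{J})$ is canonically identified with $\on{gr}^{-1}_W H_1(X_{\bar k}, \mathbb{Q}_\ell)$, on which $\sigma_\alpha$ acts as $\alpha$ by Lemma \ref{quasi-scalar-h1}. For $i=2$, since $W^{-2} = \mathscr{I}^2 + \mathscr{J}$, the piece $\on{gr}^{-2}_W = W^{-2}/W^{-3}$ is spanned by the images of the multiplication map $\on{gr}^{-1}_W \otimes \on{gr}^{-1}_W \to \on{gr}^{-2}_W$ (handling the $\mathscr{I}^2$ contribution) and of a natural map $\on{gr}^{-2}_W H_1 \to \on{gr}^{-2}_W$ induced by the inclusion $\mathscr{J} \hookrightarrow W^{-2}$; on both sources $\sigma_\alpha$ acts as $\alpha^2$ (by Lemma \ref{quasi-scalar-h1} and multiplicativity), so it does so on their combined image. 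For the inductive step $i \geq 3$, the defining formula $W^{-i} = \sum_{a+b=i,\,a,b>0} W^{-a} \cdot W^{-b}$ yields a surjection $\bigoplus_{a+b=i} \on{gr}^{-a}_W \otimes \on{gr}^{-b}_W \twoheadrightarrow \on{gr}^{-i}_W$ which is $\sigma_\alpha$-equivariant; by the induction hypothesis, $\sigma_\alpha$ acts as $\alpha^{a+b} = \alpha^i$ on every summand, hence as $\alpha^i \cdot \on{Id}$ on the quotient.

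I expect the main technical obstacle to be the $i=2$ case, specifically the verification that the $\mathscr{J}$-contribution to $\on{gr}^{-2}_W$ is indeed controlled by $W^{-2}H_1 = \mathscr{J}/(\mathscr{J} \cap \mathscr{I}^2)$ rather than by the a priori larger quotient $\mathscr{J}/(\mathscr{J} \cap W^{-3})$. This amounts to an ideal-theoretic comparison requiring that elements of $\mathscr{J} \cap \mathscr{I}^2$ lie in $\mathscr{I}\mathscr{J} + \mathscr{J}\mathscr{I} + \mathscr{I}^3$, which should follow from the fact that $\mathscr{J}$ is normally generated by loops around the boundary components $D_i$ and some care with the completed group ring structure. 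Once this structural input is in place, everything else is formal: Lemma \ref{quasi-scalar-h1} provides the $H_1$ input, multiplicativity of $W^\bullet$ propagates the scalar property up the filtration, and the non-root-of-unity hypothesis on $\alpha$ separates the resulting scalars to force semisimplicity.
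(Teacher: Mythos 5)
Your reduction is sound: if $\sigma_\alpha$ preserves the (finite, by Proposition \ref{W-adic-topology-prop}) filtration $W^\bullet$ on $\mathbb{Q}_\ell[[\pi_1^\ell(X_{\bar k},\bar x)]]/\mathscr{I}^n$ and acts on each $\on{gr}^{-i}_W$ by the scalar $\alpha^i$, then $\prod_i(\sigma_\alpha-\alpha^i)$ vanishes and distinctness of the $\alpha^i$ gives diagonalizability. This reverses the paper's logical order (the paper proves semisimplicity first and deduces the scalar action on $\on{gr}^{-i}_W$, i.e.\ Theorem \ref{quasi-scalar-pi1}, from it), and your base case $i=1$ and inductive step $i\geq 3$ are fine. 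The entire difficulty is concentrated in $i=2$, exactly where you place it.

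But the ideal-theoretic claim you rest that case on --- that $\mathscr{J}\cap\mathscr{I}^2\subset \mathscr{I}\mathscr{J}+\mathscr{J}\mathscr{I}+\mathscr{I}^3=W^{-3}$ --- is false whenever $H^1(\overline{X}_{\bar k},\mathbb{Q}_\ell)\neq 0$. Take $X$ an affine curve of genus $g\geq 1$ with $n\geq 1$ punctures, so $\pi_1^\ell(X_{\bar k},\bar x)$ is free on $a_1,b_1,\dots,a_g,b_g,c_1,\dots,c_{n-1}$ with the $c_j$ inertia loops. The element $P=\prod_i[a_i,b_i]-1$ lies in $\mathscr{J}$ (its image in $\pi_1^\ell(\overline{X}_{\bar k},\bar x)$ is the surface relation) and in $\mathscr{I}^2$, with $P\equiv\sum_i(A_iB_i-B_iA_i)\bmod\mathscr{I}^3$ in the coordinates of Example \ref{noncommutative-power-series-example}. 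Since the degree-one part of any element of $\mathscr{J}$ is a combination of the $C_j$, the degree-two part of $\mathscr{I}\mathscr{J}+\mathscr{J}\mathscr{I}+\mathscr{I}^3$ is spanned by the monomials $TC_j$ and $C_jT$; the class $\sum_i(A_iB_i-B_iA_i)$ involves no $C_j$, so $P\notin W^{-3}$. Consequently the map $\mathscr{J}\to\on{gr}^{-2}_W$ does not factor through $W^{-2}H_1=\mathscr{J}/(\mathscr{J}\cap\mathscr{I}^2)$, and your argument only shows that $\sigma_\alpha$ acts by $\alpha^2$ on the image $A$ of $\mathscr{I}^2$ and on the quotient $\on{gr}^{-2}_W/A$, i.e.\ that $(\sigma_\alpha-\alpha^2)^2=0$ on $\on{gr}^{-2}_W$. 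That is precisely the possible Jordan block --- between $\on{Sym}^2$ of the weight $-1$ part of $H_1$ and the weight $-2$ part of $H_1$ --- that the theorem must rule out, and the paper is explicit that there is no formal ``weight'' reason for it to vanish. The paper's proof supplies genuine geometric input here: it lifts the weight $-2$ part of $H_1$ into $\mathscr{I}/\mathscr{I}^n$ by sending an inertia generator $\gamma$ at a tangential basepoint $\bar x_i$ on $D_i$ to $p(\bar x,\bar x_i)\cdot\log(\iota_{x_i}(\gamma))\cdot p(\bar x_i,\bar x)$, where $p(\bar x,\bar x_i)$ is the canonical ($\sigma_\alpha$-fixed) path of Proposition-Construction \ref{canonical-path}; the $\sigma_\alpha$-equivariance of this lift is what replaces your false inclusion. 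Note that the naive class $c_j-1$ need not be an eigenvector modulo $W^{-3}$ --- the correction by canonical paths and the logarithm is essential --- so your proposal cannot be repaired without importing this construction (or an equivalent strictness statement for the mixed Hodge/weight structure on $\mathbb{Q}_\ell[[\pi_1^\ell]]$, which is not available at this point in the paper).
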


The proof of this theorem is rather easy if $H_1(X_{\bar k}, \mathbb{Q}_\ell)$ is pure (in this case, the $\mathscr{I}$-adic filtration splits $\sigma_\alpha$-equivariantly for formal reasons), but requires some work in the mixed setting.  Before proceeding with the proof of Theorem \ref{quasi-scalar-semisimple}, we will need some auxiliary definitions and results.

Let $k((t^{1/\infty}))=\bigcup_n k((t^{1/n}))$.  Choose an algebraic closure $\overline{k((t^{1/\infty}))}$ and an identification of $\bar k$ with the algebraic closure of $k$ in $\overline{k((t^{1/\infty}))}$.
\begin{prop}\label{puiseux-prop}
The inclusion $k\hookrightarrow k((t^{1/\infty}))$ induces an isomorphism $G_k\simeq G_{k((t^{1/\infty}))}$. 
\end{prop}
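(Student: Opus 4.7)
The plan is to show that every finite Galois extension of $k((t^{1/\infty}))$ arises from a finite Galois extension $L/k$ by scalar extension, with matching Galois groups, and then pass to the inverse limit. More precisely, I would prove that the algebraic closure of $k((t^{1/\infty}))$ inside the chosen $\overline{k((t^{1/\infty}))}$ is the filtered union $\bigcup_{L/k \text{ finite}} L((t^{1/\infty}))$, and that the natural coefficient-wise action identifies $\mathrm{Gal}(L((t^{1/\infty}))/k((t^{1/\infty})))$ with $\mathrm{Gal}(L/k)$.

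The key input is the classical description of finite extensions of the henselian field $k((t))$: since we are in characteristic zero the residue characteristic is also zero, so all finite extensions are tamely ramified, and the Newton--Puiseux theorem (equivalently, the structure theory of tame henselian extensions) implies that every finite Galois extension of $k((t))$ has the form $L((t^{1/n}))$ for some finite Galois $L/k$ containing $\mu_n$, where $L$ is the residue field of the extension and $n$ is the ramification index. I would quote this as a black box (e.g.\ from Serre's \emph{Local Fields}, or directly from Puiseux's theorem, which gives $\overline{k((t))} = \bigcup_{L,n} L((t^{1/n}))$).

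Given this, any finite Galois extension $M/k((t^{1/\infty}))$ sits inside the algebraic closure, and since $k((t^{1/\infty})) = \bigcup_n k((t^{1/n}))$ is a union of henselian fields each of whose finite extensions are described as above, $M$ is contained in some $L((t^{1/n}))$. Taking the compositum with $k((t^{1/\infty}))$ (which contains all $t^{1/n}$) gives $M \cdot k((t^{1/\infty})) \subseteq L((t^{1/\infty}))$. Conversely $L((t^{1/\infty}))/k((t^{1/\infty}))$ is Galois with the coefficient-wise action of $\mathrm{Gal}(L/k)$: the fixed field consists of Laurent series in some $t^{1/m}$ whose coefficients are Galois-fixed in $L$, hence lie in $k$, and so equals $k((t^{1/\infty}))$. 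The degree count matches because $L \cap k((t^{1/\infty})) = k$, which follows from the standard fact that $k$ is algebraically closed in $k((s))$ for any variable $s = t^{1/n}$ (any algebraic element of $k((s))$ over $k$ must have valuation zero, reduce to an element of $k$, and then differ from that element by a positive-valuation algebraic element, forcing it to lie in $k$).

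Taking the inverse limit over finite Galois $L/k$ then yields
\[
G_{k((t^{1/\infty}))} \;=\; \varprojlim_L \mathrm{Gal}\bigl(L((t^{1/\infty}))/k((t^{1/\infty}))\bigr) \;\cong\; \varprojlim_L \mathrm{Gal}(L/k) \;=\; G_k,
\]
and one checks that this isomorphism is precisely the restriction map induced by $k \hookrightarrow k((t^{1/\infty}))$ (every $\sigma \in G_{k((t^{1/\infty}))}$ preserves $\bar k$, which by the paper's convention is the algebraic closure of $k$ in $\overline{k((t^{1/\infty}))}$). The main obstacle is really just invoking the structure of tame extensions of $k((t))$ cleanly; everything else is formal. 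The fact that residue characteristic is zero is essential, since in positive residue characteristic wild ramification would produce extensions not captured by $L((t^{1/\infty}))$.
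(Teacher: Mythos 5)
Your proposal is correct and takes essentially the same route as the paper, whose entire proof is a one-line appeal to the Newton--Puiseux theorem (algebraic closedness of Puiseux series in residue characteristic zero); you have simply unpacked the details --- cofinality of the extensions $L((t^{1/\infty}))$, linear disjointness via $L \cap k((t^{1/\infty})) = k$, and the passage to the inverse limit --- that the paper leaves implicit.
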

\begin{proof}
This follows from the algebraic closedness of the field of Puiseux series over an algebraically closed field of characteristic zero.
\end{proof}
\begin{defn}
A \emph{rational tangential basepoint} of $X$ is a $k((t))$-point of $X$.  If $x$ is a rational tangential basepoint of $X$, we let $\bar x$ denote the geometric point of $X$ obtained from our choice of algebraic closure of $k((t^{1/\infty}))$ (which is also an algebraic closure of $k((t))$).
\end{defn}
Because of Proposition \ref{puiseux-prop}, a rational tangential basepoint $x$ of $X$ induces a natural action of $G_k$ on $\pi_1^{\ell}(X_{\bar k}, \bar x)$.
\begin{prop}\label{I-adic-weights}
Let $x_1, x_2$ be rational points or rational tangential basepoints of $X$, and $\bar x_1, \bar x_2$ the associated geometric points.  Then the $G_k$-representation on the $\mathbb{Q}_\ell$-vector space $$\mathscr{I}(\bar x_1, \bar x_2)^n/\mathscr{I}(\bar x_1, \bar x_2)^{n+1}$$ (where if one or both of the $x_i$ is a rational tangential basepoint, we view this as a $G_k$-representation via the isomorphism from Proposition \ref{puiseux-prop}) is mixed  with weights in $[-2n, -n]$.  For $\sigma_\alpha$ as in Lemma \ref{quasi-scalar-h1}, $\sigma_\alpha$ acts on $\mathscr{I}(\bar x_1, \bar x_2)^n/\mathscr{I}(\bar x_1, \bar x_2)^{n+1}$ semi-simply, with eigenvalues contained in $\{\alpha^n, \alpha^{n+1}, \cdots, \alpha^{2n}\}$.
\end{prop}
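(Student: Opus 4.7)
The plan is to reduce to the single-basepoint case and realize $\mathscr{I}(\bar x)^n/\mathscr{I}(\bar x)^{n+1}$ as a $G_k$-equivariant quotient of $H_1(X_{\bar k}, \mathbb{Q}_\ell)^{\otimes n}$, whereupon Lemma \ref{quasi-scalar-h1} immediately supplies the required weight and eigenvalue bounds.

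First, suppose $\bar x_1 = \bar x_2 = \bar x$. Multiplication in the group ring gives a $G_k$-equivariant surjection $\mathscr{I}(\bar x)^{\otimes n} \twoheadrightarrow \mathscr{I}(\bar x)^n$; any elementary tensor containing a factor in $\mathscr{I}(\bar x)^2$ lands in $\mathscr{I}(\bar x)^{n+1}$, so composing with the quotient yields a $G_k$-equivariant surjection
\[
(\mathscr{I}(\bar x)/\mathscr{I}(\bar x)^2)^{\otimes n} \twoheadrightarrow \mathscr{I}(\bar x)^n/\mathscr{I}(\bar x)^{n+1}.
\]
By the first part of Proposition \ref{abelianization-prop}, the source is canonically $H_1(X_{\bar k}, \mathbb{Q}_\ell)^{\otimes n}$. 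Lemma \ref{quasi-scalar-h1} asserts that $H_1(X_{\bar k}, \mathbb{Q}_\ell)$ is mixed with weights in $\{-1, -2\}$ and that $\sigma_\alpha$ acts as $\alpha^i$ on $\on{gr}^{-i}_W H_1$. Because $\alpha$ is not a root of unity we have $\alpha \neq \alpha^2$, so the weight filtration on $H_1$ splits $\sigma_\alpha$-equivariantly into eigenspaces and $\sigma_\alpha$ is diagonalizable on $H_1$. Both diagonalizability and the weight bounds pass to tensor powers, so $\sigma_\alpha$ acts diagonalizably on $H_1^{\otimes n}$ with eigenvalues in $\{\alpha^n, \alpha^{n+1}, \ldots, \alpha^{2n}\}$ and the representation is mixed of weights in $[-2n, -n]$. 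These properties descend to the quotient $\mathscr{I}(\bar x)^n/\mathscr{I}(\bar x)^{n+1}$.

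For the two-basepoint case, pick any $p \in \pi_1^\ell(X_{\bar k}; \bar x_1, \bar x_2)$; right multiplication by $p$ is a $\mathbb{Q}_\ell$-linear isomorphism $\mathscr{I}(\bar x_1)^n \xrightarrow{\sim} \mathscr{I}(\bar x_1, \bar x_2)^n$. Exactly as in the proof of Proposition \ref{abelianization-prop}(2), for any second path $p'$ and any $a \in \mathscr{I}(\bar x_1)^n$ the difference $a(p - p')$ lies in $\mathscr{I}(\bar x_1, \bar x_2)^{n+1}$, so the induced map on graded pieces is independent of the choice of path. Applying this to $p$ and its Galois translate $\sigma\cdot p$ shows that the graded identification is $G_k$-equivariant, reducing us to the case just handled; tangential basepoints are accommodated verbatim via Proposition \ref{puiseux-prop}. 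The only mildly delicate step is the $G_k$-equivariance of this change-of-basepoint identification, but the required path-ambiguity argument is precisely the one already used for the first graded in Proposition \ref{abelianization-prop}(2); everything else is bookkeeping with the multiplicativity of the $\mathscr{I}$-adic filtration and a direct appeal to Lemma \ref{quasi-scalar-h1}.
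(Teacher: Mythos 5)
Your proof is correct and follows essentially the same route as the paper's: both exhibit $\mathscr{I}(\bar x_1,\bar x_2)^n/\mathscr{I}(\bar x_1,\bar x_2)^{n+1}$ as a Galois-equivariant quotient of an $n$-fold tensor product of first graded pieces and conclude by additivity of weights and diagonalizability of $\sigma_\alpha$ on $H_1$. The only cosmetic difference is that the paper retains $\mathscr{I}(\bar x_1,\bar x_2)/\mathscr{I}(\bar x_1,\bar x_2)^2$ as the last tensor factor and invokes the canonical identification of Proposition \ref{abelianization-prop}(2), whereas you change basepoint at level $n$ directly --- but the underlying path-independence argument is identical.
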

\begin{proof}
If $n=0$ this is trivial; if $n=1$ it follows from Propositions \ref{abelianization-prop} and \ref{H1-semisimple}.  In general, note that the composition map $$(\mathscr{I}(\bar x_1)/\mathscr{I}(\bar x_1)^2)^{\otimes n-1}\otimes \mathscr{I}(\bar x_1, \bar x_2)/\mathscr{I}(\bar x_1, \bar x_2)^2\to \mathscr{I}(\bar x_1, \bar x_2)^n/\mathscr{I}(\bar x_1, \bar x_2)^{n+1}$$ is Galois-equivariant and surjective.  As weights are additive in tensor products, this completes the proof.
\end{proof}
\begin{propconstr}\label{canonical-path}
Let $\sigma_\alpha$ be as in Lemma \ref{quasi-scalar-h1}, with $\alpha$ not a root of unity.  Let $x_1, x_2$ be rational points or rational tangential basepoints of $X$.  Then there exists a unique element $p(\bar x_1, \bar x_2)\in \mathbb{Q}_\ell[[\pi_1^\ell(X_{\bar k}; \bar x_1, \bar x_2)]]$ characterized by the following two properties:
\begin{enumerate}
\item $p(\bar x_1, \bar x_2)$ is fixed by $\sigma_\alpha$, and
\item $\epsilon(p(\bar x_1, \bar x_2))=1$, where $\epsilon:\mathbb{Q}_\ell[[\pi_1^\ell(X_{\bar k}; \bar x_1, \bar x_2)]]\to \mathbb{Q}_\ell$ is the augmentation map.
\end{enumerate}
We refer to $p(\bar x_1, \bar x_2)$ as the \emph{canonical path} between $\bar x_1, \bar x_2$.
\end{propconstr}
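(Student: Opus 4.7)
The plan is to use the fact that $\sigma_\alpha - 1$ is a topological automorphism of $\mathscr{I}(\bar x_1, \bar x_2)$, so that both the existence and uniqueness of $p(\bar x_1, \bar x_2)$ reduce to solving a linear equation in a complete filtered module.

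First I would note that the set $S \subset \mathbb{Q}_\ell[[\pi_1^\ell(X_{\bar k}; \bar x_1, \bar x_2)]]$ of elements of augmentation $1$ is non-empty (any honest element of $\pi_1^\ell(X_{\bar k}; \bar x_1, \bar x_2)$ lies in $S$) and is a torsor under the additive group $\mathscr{I}(\bar x_1, \bar x_2)$. The augmentation map is $G_k$-equivariant (with trivial action on $\mathbb{Q}_\ell$), so $\sigma_\alpha$ preserves $S$ and acts on it compatibly with its linear action on $\mathscr{I}(\bar x_1, \bar x_2)$. Thus it suffices to show that $\sigma_\alpha - 1$ is a bijection on $\mathscr{I}(\bar x_1, \bar x_2)$: indeed, picking any $p_0 \in S$, one has $\sigma_\alpha(p_0) - p_0 \in \mathscr{I}(\bar x_1, \bar x_2)$, and setting $p = p_0 - (\sigma_\alpha - 1)^{-1}(\sigma_\alpha(p_0) - p_0)$ gives a $\sigma_\alpha$-fixed element of $S$; while the injectivity of $\sigma_\alpha - 1$ on $\mathscr{I}$ forces any two fixed elements of $S$ to coincide.

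The heart of the argument, and the only real step, is the bijectivity claim. By Proposition \ref{I-adic-weights}, for each $n \geq 1$ the operator $\sigma_\alpha$ acts semisimply on $\mathscr{I}^n/\mathscr{I}^{n+1}$ with eigenvalues drawn from $\{\alpha^n, \alpha^{n+1}, \ldots, \alpha^{2n}\}$. Since $\alpha$ is assumed not to be a root of unity, none of these eigenvalues equals $1$, so $\sigma_\alpha - 1$ is a bijection on every graded piece $\mathscr{I}^n/\mathscr{I}^{n+1}$ for $n \geq 1$. Injectivity on $\mathscr{I}(\bar x_1, \bar x_2) = \varprojlim_n \mathscr{I}/\mathscr{I}^n$ is then immediate: any non-zero element has a non-zero image in some $\mathscr{I}^n/\mathscr{I}^{n+1}$, on which $\sigma_\alpha - 1$ acts injectively. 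For surjectivity, given $y \in \mathscr{I}$, I would construct a pre-image by successive approximation modulo $\mathscr{I}^n$: having solved $(\sigma_\alpha - 1)(x_n) \equiv y \pmod{\mathscr{I}^n}$, I choose a lift of the difference $y - (\sigma_\alpha - 1)(x_n) \in \mathscr{I}^n$ to a pre-image under the bijection $\sigma_\alpha - 1 : \mathscr{I}^n/\mathscr{I}^{n+1} \to \mathscr{I}^n/\mathscr{I}^{n+1}$ and correct $x_n$ by it. The sequence $\{x_n\}$ is Cauchy in the $\mathscr{I}$-adic topology, and its limit, which exists by $\mathscr{I}$-adic completeness of $\mathbb{Q}_\ell[[\pi_1^\ell(X_{\bar k}; \bar x_1, \bar x_2)]]$, is the desired pre-image.

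The main obstacle would have been verifying that $\sigma_\alpha$ has no eigenvalue equal to $1$ on the $\mathscr{I}$-adic graded pieces, but this is exactly what Proposition \ref{I-adic-weights} delivers via the weight-multiplicativity input from Lemma \ref{quasi-scalar-h1}. Everything else is formal manipulation of complete filtered modules.
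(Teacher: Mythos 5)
Your proposal is correct and is essentially the paper's own argument: both reduce the existence and uniqueness of the fixed augmentation-one element to the fact (Proposition \ref{I-adic-weights}) that $\sigma_\alpha$ acts on each graded piece $\mathscr{I}^n/\mathscr{I}^{n+1}$ with eigenvalues in $\{\alpha^n,\dots,\alpha^{2n}\}$, none equal to $1$ since $\alpha$ is not a root of unity, and then pass to the limit using $\mathscr{I}$-adic completeness. The paper packages this as the unique $\sigma_\alpha$-equivariant splitting of the augmentation map $\epsilon_n$ on each finite quotient, while you phrase it as invertibility of $\sigma_\alpha-1$ on $\mathscr{I}$ acting on the torsor of augmentation-one elements; these are the same computation.
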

\begin{proof}
Such $p(\bar x_1, \bar x_2)$ are in bijection with $\sigma_\alpha$-equivariant splittings $s$ of $\epsilon$, by setting $p(\bar x_1, \bar x_2)=s(1)$.  So it is enough to prove that there is a unique $\sigma_\alpha$-equivariant splitting of $\epsilon$.

Because $\mathbb{Q}_\ell[[\pi_1^\ell(X_{\bar k}; \bar x_1, \bar x_2)]]$ is complete with respect to the $\mathscr{I}$-adic filtration, it is enough to show that the augmentation maps $$\epsilon_n: \mathbb{Q}_\ell[[\pi_1^\ell(X_{\bar k}; \bar x_1, \bar x_2)]]/\mathscr{I}^n\to \mathbb{Q}_\ell$$ admit unique $\sigma_\alpha$-equivariant splittings.  But $$\ker(\epsilon_n)=\mathscr{I}/\mathscr{I}^n,$$ on which $\sigma_\alpha$ acts with generalized eigenvalues in $\{\alpha, \cdots, \alpha^{2n}\}$ by Proposition \ref{I-adic-weights}, which does not contain $1$ by assumption.  So the result is clear.
\end{proof}
\begin{remark}\label{canonical-paths-rmk}
We have $p(\bar x, \bar x)=1$, and by uniqueness $p(\bar x_1, \bar x_2)\cdot p(\bar x_2, \bar x_3)=p(\bar x_1, \bar x_3)$.  Hence in particular $p(\bar x_1, \bar x_2)=p(\bar x_2, \bar x_1)^{-1}$.
\end{remark}
\begin{proof}[Proof of Theorem \ref{quasi-scalar-semisimple}]
Let $r_n$ be the quotient map $r_n: \mathscr{I}/\mathscr{I}^n\to\mathscr{I}/\mathscr{I}^2.$ First, we argue that it is enough to produce a $\sigma_\alpha$-equivariant splitting $s_n$ of the map $r_n$.  Indeed, then $s_n$ induces a surjective, $\sigma_\alpha$-equivariant map 
$$\xymatrix{
\bigoplus_{i=0}^{n-1} (\mathscr{I}/\mathscr{I}^2)^{\otimes i}\ar[rr]^{\oplus s_n^{\otimes i}}& &\mathbb{Q}_\ell[[\pi_1^{\ell}(X_{\bar k}, \bar x)]]/\mathscr{I}^n.
}$$  
As $\sigma_\alpha$ acts semi-simply on the source of this map, it does so on the target as well.

We now construct such an $s_n$. Let $V_1\oplus V_2$ denote the splitting of $\mathscr{I}/\mathscr{I}^2\simeq H_1(X_{\bar k}, \mathbb{Q}_\ell)$ into $\sigma_\alpha$-eigenspaces, where $\sigma_\alpha$ acts on $V_1$ via $\alpha\cdot \on{Id}$ and on $V_2$ via $\alpha^2\cdot \on{Id}$.  Let $t_i: \mathscr{I}/\mathscr{I}^2\to V_i, i=1,2$ be the natural quotient maps.  It suffices to construct splittings of $t_i\circ r_n$ for each $i$.  

For $i=1$, note that by Proposition \ref{I-adic-weights}, the generalized eigenvalues of $\sigma_\alpha$ on $\ker(t_1\circ r_n)$ are contained in $\{\alpha^2, \alpha^3, \cdots, \alpha^{2n-2}\}$; as $\alpha$ does not appear on this list, there is a (unique) $\sigma_\alpha$-equivariant splitting.  

So we need only construct a $\sigma_\alpha$-equivariant splitting of $t_2\circ r_n$.  In this case, there is no ``weight" reason for the existence of such a splitting: $\alpha^2$ may very well appear as an eigenvalue of $\sigma_\alpha$ acting on $\ker(t_2\circ r_n)$.  Instead, we give a direct construction.

Without loss of generality, we may assume that $V_2$ is nonzero. We may replace $k$ by a finite extension so that each of the $c$ components $D_i$ of the simple normal crossings divisor $D=\bigcup_i D_i=\overline{X}\setminus X$ is geometrically connected and has a rational point $x_i^0$ in the smooth locus of $D$.  (We replace $\sigma_\alpha$ by a power so it lies in the Galois group of this extension of $k$; it suffices to prove semisimplicity for this power.) For each $i$, choose a $k[[t]]$-point of $\overline{X}$ transverse to $D_i$, so that the special point of $\on{Spec}(k[[t]])$ is sent to $x_i^0$.  Let $x_i$ be the associated $k((t))$-point, $\tilde x_i$ the associated $\bar k((t))$-point, and $\bar x_i$ the associated $\overline{k((t))}$-point. 

Note that the map $\sqcup_i \tilde x_i\to X$ induces a surjection $$u: \mathbb{Q}_\ell(1)^{\oplus c}\simeq \oplus_i H_1(\tilde x_i, \mathbb{Q}_\ell)\to H_1(X_{\bar k}, \mathbb{Q}_\ell)\twoheadrightarrow V_2,$$ by e.g. the proof of Lemma \ref{H1-semisimple}. We first argue that it is enough to produce a $\sigma_\alpha$-equivariant lift of this map to $\mathscr{I}/\mathscr{I}^n$, i.e.~to construct the dotted arrow $w$ below.
$$\xymatrix{
 & & \mathbb{Q}_\ell(1)^{\oplus c}\simeq \oplus_i H_1(\tilde x_i, \mathbb{Q}_\ell) \ar@{>>}[d]^u \ar@/_15pt/@{.>}[lld]_w\\
\mathscr{I}/\mathscr{I}^n\ar@{>>}[r]^-{r_n} & \mathscr{I}/\mathscr{I}^2=H_1(X_{\bar k}, \mathbb{Q}_\ell) \ar@{>>}[r]^-{t_2} &V_2.
}$$
Indeed, $\sigma_\alpha$ evidently acts semisimply on $\mathbb{Q}_\ell(1)^{\oplus c}$, so the vertical map $u$ above admits a $\sigma_\alpha$-equivariant section $s$.  Composing it with the dotted arrow $w$ will give the desired section to $t_2\circ r_n$.

We now produce $w$ as desired.  Let $\gamma$ be a generator of the maximal pro-$\ell$ quotient $I$ of the inertia group of $G_{k((t))}$; the map $x_i\to X$ induces a map $$\iota_{x_i}: I\to \pi_1^\ell(X_{\bar k}, \bar x_i).$$
\begin{figure}[h]
\centering{
\hspace*{1cm} \resizebox{130mm}{!}{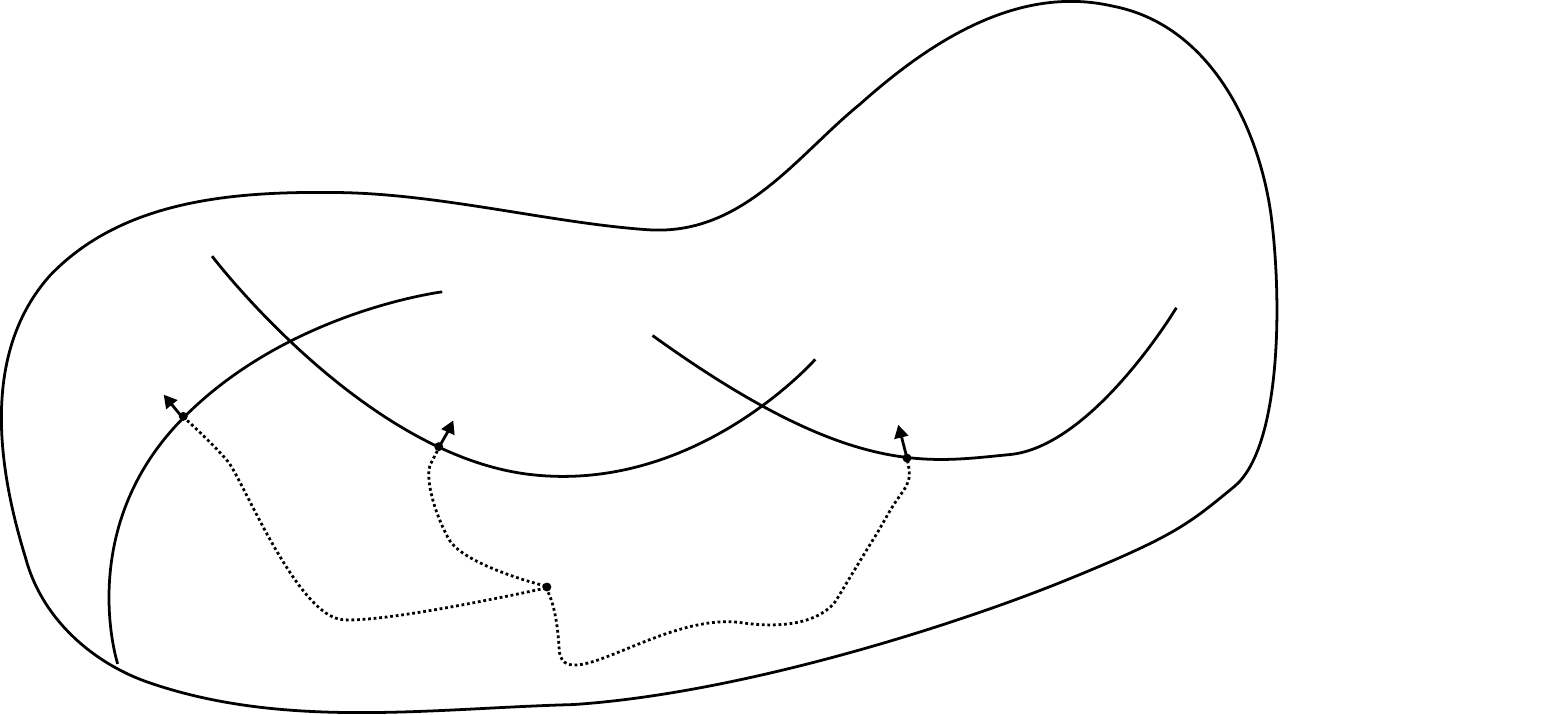}
\caption{A schematic depiction of the lift of the weight $-2$ part of $\mathscr{I}(\bar x)/\mathscr{I}(\bar x)^2$ to $\mathbb{Q}_\ell[[\pi_1^{\ell}(X_{\bar k}, \bar x)]]$}\label{weight2-schematic}
}
\end{figure} 
Observe that there is a canonical isomorphism $I\simeq H_1(\tilde x_i, \mathbb{Z}_\ell)\simeq \mathbb{Z}_\ell(1)$. Note moreover that the maps $u_i: H_1(\tilde x_i, \mathbb{Z}_\ell)\to V_2$ (the direct summands of $u$) factor as $$u_i: H_1(\tilde x_i, \mathbb{Z}_\ell)\simeq I\overset{\iota_{x_i}}{\longrightarrow} \pi_1^{\ell}(X_{\bar k}, \bar x_i)\to \pi_1^{\ell}(X_{\bar k}, \bar x_i)^{\text{ab}}\otimes \mathbb{Q}_\ell\simeq H_1(X_{\bar k}, \mathbb{Q}_\ell)\twoheadrightarrow V_2.$$  Of course it suffices to construct $\sigma_\alpha$-equivariant lifts of $u_i$ to $\mathscr{I}/\mathscr{I}^n$ for each $i$; we claim that $$\gamma\mapsto p(\bar x, \bar x_i)\cdot\log(\iota_{x_i}(\gamma))\cdot p(\bar x_i, \bar x)$$ 
is such a lift, where $p(\bar x, \bar x_i)$ is the canonical path from Proposition-Construction \ref{canonical-path}.  Here $\log(\iota_{x_i}(\gamma))$ is the power series $$\log(\iota_{x_i}(\gamma))=\log(1+(\iota_{x_i}(\gamma)-1))=\sum_{j=1}^\infty(-1)^{j+1}\frac{(\iota_{x_i}(\gamma)-1)^j}{j},$$ which is in fact a finite sum because $(\iota_{x_i}(\gamma)-1)\in \mathscr{I}$.

We must check that this is (a) a lift, and (b) $\sigma_\alpha$-equivariant.    To check statement (a), we must show that $$p(\bar x, \bar x_i)\cdot\log(\iota_{x_i}(\gamma))\cdot p(\bar x_i, \bar x)=\iota_{x_i}(\gamma)-1 \bmod \mathscr{I}^2,$$ by Proposition \ref{abelianization-prop}(1).  Because $(\iota_{x_i}(\gamma)-1)\in \mathscr{I}$, we have
\begin{align*}
p(\bar x, \bar x_i)\cdot\log(\iota_{x_i}(\gamma))\cdot p(\bar x_i, \bar x) &= p(\bar x, \bar x_i)\cdot(\iota_{x_i}(\gamma)-1)\cdot p(\bar x_i, \bar x)\bmod \mathscr{I}^2\\
&= \iota_{x_i}(\gamma)-1 \bmod \mathscr{I}^2,
\end{align*}
where the first equality follows from the power series definition of $\log$ and the second from the same argument as in the proof of Proposition \ref{abelianization-prop}(2).  To check statement (b), we must verify that $$\sigma_\alpha \left(p(\bar x, \bar x_i)\cdot\log(\iota_{x_i}(\gamma))\cdot p(\bar x_i, \bar x)\right)=\alpha^2\cdot p(\bar x, \bar x_i)\cdot\log(\iota_{x_i}(\gamma))\cdot p(\bar x_i, \bar x).$$
As $V_2$ was assumed to be non-zero, and is a quotient of $\mathbb{Q}_\ell(1)^{\oplus c}$, we know that $\alpha^2=\chi(\sigma_\alpha)$ where $\chi: G_k\to \mathbb{Z}_\ell^\times$ is the cyclotomic character.  Since $G_k$ acts on $I$ via the cyclotomic character, we have
\begin{align*}
\sigma_\alpha \left(p(\bar x, \bar x_i)\cdot\log(\iota_{x_i}(\gamma))\cdot p(\bar x_i, \bar x)\right) &= \sigma_\alpha(p(\bar x, \bar x_i))\cdot\log(\iota_{x_i}(\gamma^{\chi(\sigma_\alpha)}))\cdot \sigma_\alpha(p(\bar x_i, \bar x))\\
&=p(\bar x, \bar x_i)\cdot\log(\iota_{x_i}(\gamma)^{\alpha^2})\cdot p(\bar x_i, \bar x)\\
&=\alpha^2\cdot p(\bar x, \bar x_i)\cdot\log(\iota_{x_i}(\gamma))\cdot p(\bar x_i, \bar x)
\end{align*}
as desired, where the first equality follows from the fact that composition of paths and $\iota_{x_i}$ commute with the $G_k$-action, the second from the fact that the $p(\bar x, \bar x_i)$ are $\sigma_\alpha$-invariant (by definition), and the last from the identity $\log(y^n)=n\log(y)$.  This completes the proof.
\end{proof}
\begin{remark}
An essentially identical argument proves:
\begin{theorem}\label{frobenius-semisimple}
Let $Y/\mathbb{F}_q$ be a smooth, geometrically connected variety admitting a simple normal crossings compactification, with $\ell$ prime to $q$.  Let $y\in Y(\mathbb{F}_q)$ be a rational point.  Then Frobenius acts semisimply on $\mathbb{Q}_\ell[[\pi_1^\ell(Y_{\overline{\mathbb{F}_q}}, \bar y)]]/\mathscr{I}^n$ for any $n$.
\end{theorem}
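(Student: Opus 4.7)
The plan is to run the proof of Theorem \ref{quasi-scalar-semisimple} verbatim, with the (geometric) Frobenius $F\in G_{\mathbb{F}_q}$ playing the role of $\sigma_\alpha$. What made $\sigma_\alpha$ work --- a semisimple action on $H_1$ with distinct scalar actions $\alpha$ and $\alpha^2$ on the weight-$(-1)$ and weight-$(-2)$ pieces, together with the property that $\alpha$ is not a root of unity --- is replaced here by Lemma \ref{H1-semisimple}: $F$ acts semisimply on $H_1(Y_{\overline{\mathbb{F}_q}},\mathbb{Q}_\ell)$ with eigenvalues of absolute value $q^{1/2}$ on $\on{gr}^{-1}_W$ and $q$ on $\on{gr}^{-2}_W$. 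The ``not a root of unity'' hypothesis is automatic in this setting: since $q>1$, every Frobenius eigenvalue on $\mathscr{I}/\mathscr{I}^n$ has absolute value $\geq q^{1/2}>1$, and weight separation is by absolute value rather than by picking a generic scalar.

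First I would prove the Frobenius analogue of Proposition \ref{I-adic-weights}: the $G_{\mathbb{F}_q}$-equivariant surjection $(\mathscr{I}/\mathscr{I}^2)^{\otimes n}\twoheadrightarrow \mathscr{I}^n/\mathscr{I}^{n+1}$, together with the fact that tensor products of semisimple representations of a cyclic group over $\mathbb{Q}_\ell$ are semisimple (diagonalizability is preserved under tensor product), shows that $F$ acts semisimply on each $\mathscr{I}^n/\mathscr{I}^{n+1}$ with weights in $[-2n,-n]$. From this the analogue of Proposition-Construction \ref{canonical-path} follows at once: $1$ is not a Frobenius eigenvalue on $\mathscr{I}/\mathscr{I}^n$, so the augmentation admits a unique $F$-equivariant splitting, producing canonical $F$-invariant paths $p(\bar y_1,\bar y_2)$ between any two rational or tangential basepoints.

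The main step, as in the original proof, is to construct an $F$-equivariant section $s_n$ of $r_n:\mathscr{I}/\mathscr{I}^n\twoheadrightarrow H_1(Y_{\overline{\mathbb{F}_q}},\mathbb{Q}_\ell)$; then $\bigoplus_{i=0}^{n-1}(\mathscr{I}/\mathscr{I}^2)^{\otimes i}$ maps $F$-equivariantly and surjectively onto $\mathbb{Q}_\ell[[\pi_1^\ell(Y_{\overline{\mathbb{F}_q}},\bar y)]]/\mathscr{I}^n$, and the source is $F$-semisimple. Decompose $\mathscr{I}/\mathscr{I}^2=V_1\oplus V_2$ into pure weight-$(-1)$ and weight-$(-2)$ summands. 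For $V_1$: eigenvalues of $F$ on $\ker(r_n)=\mathscr{I}^2/\mathscr{I}^n$ all have absolute value $\geq q$, so by pure-weight separation the preimage of $V_1$ contains a unique weight-$(-1)$ eigenspace, which is the desired section. For $V_2$: after a finite base change (passing to $\mathbb{F}_{q^m}$ so that every component $D_i$ of $\overline{Y}\setminus Y$ is geometrically connected with a rational smooth point, and replacing $F$ by $F^m$; this reduction is harmless since semisimplicity of $F^m$ implies semisimplicity of $F$), one picks a transverse formal arc at each $D_i$, producing a basepoint $\bar y_i$ and a map $\iota_{y_i}:I\simeq\mathbb{Z}_\ell(1)\to\pi_1^\ell(Y_{\overline{\mathbb{F}_q}},\bar y_i)$ from the pro-$\ell$ tame inertia. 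The formula $\gamma\mapsto p(\bar y,\bar y_i)\cdot\log(\iota_{y_i}(\gamma))\cdot p(\bar y_i,\bar y)$ produces the required $F$-equivariant lift, with equivariance verified using $F$-invariance of the canonical paths, the identity $\log(x^q)=q\log(x)$, and the fact that Frobenius acts on $\mathbb{Z}_\ell(1)$ by multiplication by $q$ --- which is exactly the scalar by which $F$ acts on the quotient $\mathbb{Q}_\ell(1)^{\oplus c}\twoheadrightarrow V_2$, since weight-$(-2)$ classes arising as images of inertia are Tate-twisted copies of the trivial representation.

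The only non-bookkeeping worry is whether rational tangential basepoints behave as expected in positive characteristic. Since $\ell$ is coprime to $p$, the maximal pro-$\ell$ quotient of the inertia group of $G_{\mathbb{F}_{q^m}((t))}$ is canonically $\mathbb{Z}_\ell(1)$ with Galois acting via the cyclotomic character; this replaces the role of Proposition \ref{puiseux-prop} and suffices to run the construction without further modification.
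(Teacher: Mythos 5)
Your proposal is correct and follows essentially the same route as the paper, which proves Theorem \ref{frobenius-semisimple} simply by remarking that the proof of Theorem \ref{quasi-scalar-semisimple} goes through verbatim once one replaces $\alpha^j$ by a $q$-Weil number of weight $-j$; your write-up fleshes out exactly that substitution, including the correct observations that weight separation by archimedean absolute value replaces the ``$\alpha$ not a root of unity'' hypothesis and that tame inertia supplies the $\mathbb{Z}_\ell(1)$ needed for the weight $-2$ lift in characteristic $p$.
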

The only difference in the proof is that one replaces $\alpha^j$ with a $q$-Weil number of weight $-j$ whenever necessary.
\end{remark}
\begin{proof}[Proof of Theorem \ref{quasi-scalar-pi1}]
It suffices to prove the theorem with $\mathbb{Q}_\ell[[\pi_1^\ell(X_{\bar k}, \bar x)]]$ replaced by $\mathbb{Q}_\ell[[\pi_1^\ell(X_{\bar k}, \bar x)]]/\mathscr{I}^n$ with the induced $W$-adic filtration, by Proposition \ref{W-adic-topology-prop}. 

Let $\sigma_\alpha$ be as in Lemma \ref{quasi-scalar-h1}; we claim that this same $\sigma_\alpha$ also satisfies the conclusions of Theorem \ref{quasi-scalar-pi1}.  Indeed, by Theorem \ref{quasi-scalar-semisimple}, we may choose a $\sigma_\alpha$-equivariant splitting $s$ of the quotient map $\mathscr{I}/\mathscr{I}^n\to \mathscr{I}/\mathscr{I}^2.$  The induced map 
$$\xymatrix{
\bigoplus_{i=0}^{n-1} H_1(X_{\bar k}, \mathbb{Q}_\ell)^{\otimes i}\simeq \bigoplus_{i=0}^{n-1} (\mathscr{I}/\mathscr{I}^2)^{\otimes i} \ar[rr]^-{\oplus s^{\otimes i}} && \mathbb{Q}_\ell[[\pi_1^\ell(X_{\bar k}, \bar x)]]/\mathscr{I}^n
}$$
is surjective and $\sigma_\alpha$-equivariant, so we are done by the multiplicativity of the $W^\bullet$-filtration.
\end{proof}


\section{Integral $\ell$-adic periods and convergent group rings}\label{integral-l-adic-periods-section}

\subsection{Convergent group rings} In the previous section we constructed (Theorem \ref{quasi-scalar-pi1}) certain elements $\sigma_{\alpha}\in G_k$ acting semisimply on $\mathbb{Q}_\ell[[\pi_1^\ell(X_{\bar k|},\bar x)]]$; in particular, $\mathbb{Q}_\ell[[\pi_1^\ell(X_{\bar k}, \bar x)]]$ admits a set of $\sigma_\alpha$-eigenvectors with dense span (in the $\mathscr{I}$-adic topology).  This is typically not the case for $\mathbb{Z}_\ell[[\pi_1^\ell(X_{\bar k}, \bar x)]]$.
\begin{example}
Suppose $X=\mathbb{G}_m$, and let $x$ be any $k$-rational point of $X$.  Then $\pi_1^\ell(X_{\bar k}, \bar x)=\mathbb{Z}_\ell(1)$.  Let $\gamma$ be a topological generator of $\mathbb{Z}_\ell(1)$.  Via the map $\gamma\mapsto T+1$ we have:
\begin{itemize}
\item $\mathbb{Z}_\ell[[\pi_1^\ell(X_{\bar k}, \bar x)]]\overset{\sim}{\to} \mathbb{Z}_\ell[[T]]$, $\mathscr{I}^n=W^{-2n}=W^{-2n+1}=(T^n)$;
\item $\mathbb{Q}_\ell[[\pi_1^\ell(X_{\bar k}, \bar x)]]\overset{\sim}{\to} \mathbb{Q}_\ell[[T]]$, $\mathscr{I}^n=W^{-2n}=W^{-2n+1}=(T^n)$.
\end{itemize}
For $\sigma \in G_k$, we have $$\sigma(1+T)=(1+T)^{\chi(\sigma)}$$ where $\chi: G_k\to \mathbb{Z}_\ell^\times$ is the cyclotomic character.  The elements $$(\log(1+T))^n\in \mathbb{Q}_\ell[[T]], n \in \mathbb{Z}_{\geq 0}$$ are $\sigma$-eigenvectors with eigenvalue $\chi(\sigma)^n$; their span is evidently dense in the $(T)$-adic topology, as $(\log(1+T))^n$ has leading term $T^n$.  On the other hand, if $\chi(\sigma)\not=1$, the only $\sigma$-eigenvector in $\mathbb{Z}[[T]]$ is $1$.
\end{example}
Observe that the $\sigma$-eigenvectors in the example above, $(\log(1+T))^n$, are power series with positive $\ell$-adic radius of convergence.  The purpose of this section is to generalize this observation to arbitrary $X$.
\begin{defn}
Let $r>0$ be a positive real number.  Let $$\pi_n: \mathbb{Q}_\ell[[\pi_1^\ell(X_{\bar k}, \bar x)]]\to \mathbb{Q}_\ell[[\pi_1^{\ell}(X_{\bar k}, \bar x)]]/\mathscr{I}^n$$ be the quotient map, and $v_n: \mathbb{Q}_\ell[[\pi_1^{\ell}(X_{\bar k}, \bar x)]]/\mathscr{I}^n\to \mathbb{Z}\cup \{\infty\}$ the valuation defined by $$v_n(p)=-\inf\{m\in \mathbb{Z}\mid \ell^m\cdot p\in \on{im}(\mathbb{Z}_\ell[[\pi_1^\ell(X_{\bar k}, \bar x)]]/\mathscr{I}^n\to \mathbb{Q}_\ell[[\pi_1^\ell(X_{\bar k}, \bar x)]]/\mathscr{I}^n)\}.$$
We define the \emph{convergent group ring of radius $\ell^{-r}$} to be $$\mathbb{Q}_\ell[[\pi_1^\ell(X_{\bar k}, \bar x)]]^{\leq \ell^{-r}}:=\{p\in \mathbb{Q}_\ell[[\pi_1^\ell(X_{\bar k}, \bar x)]]\mid v_n(\pi_n(p))+nr\to \infty \text{ as } n\to \infty\},$$ topologized via the $r$-Gauss norm $$|p|_r:=\sup_n\{\ell^{-v_n(\pi_n(p))-nr}\}.$$
We again use the notation  $\mathscr{I}^n, W^{-i}$ to denote the filtrations on $\mathbb{Q}_\ell[[\pi_1^\ell(X_{\bar k}, \bar x)]]^{\leq \ell^{-r}}$ inherited from $\mathbb{Q}_\ell[[\pi_1^\ell(X_{\bar k}, \bar x)]]$.
\end{defn}
Loosely speaking, $\mathbb{Q}_\ell[[\pi_1^\ell(X_{\bar k}, \bar x)]]^{\leq \ell^{-r}}$ consists of those elements whose denominators do not ``grow too quickly" modulo $\mathscr{I}^n$.  Note that it is \emph{not} given the subspace topology, but rather the Gauss norm topology, in all that follows.
\begin{example}\label{convergence-example}
Suppose $\pi_1^\ell(X_{\bar k}, \bar x)$ is a free pro-$\ell$ group, generated by $\gamma_1, \cdots, \gamma_m$ (for example, if $X$ is an affine curve). Then the map $\gamma_i\mapsto 1+T_i$ induces an isomorphism $$\mathbb{Q}_\ell[[\pi_1^\ell(X_{\bar k}, \bar x)]]\overset{\sim}{\to} \mathbb{Q}_\ell\langle\langle T_1, \cdots,T_m\rangle \rangle,$$ where $\mathbb{Q}_\ell\langle\langle T_1, \cdots,T_m\rangle \rangle$ is the ring of non-commutative power series in $T_1, \cdots, T_m$ over $\mathbb{Q}_\ell$. The subring $\mathbb{Q}_\ell[[\pi_1^\ell(X_{\bar k}, \bar x)]]^{\leq \ell^{-r}}$ consists of those power series $\sum a_I T^{I}$ such that $$\lim_{|I|\to \infty} v_\ell(a_I)+|I|r=\infty.$$  If $m=1$, this is precisely the set of univariate power series converging on the closed ball of radius $\ell^{-r}$.  The Gauss norm is given by $$\left|\sum a_IT^I\right|_r=\sup_I \ell^{-v_\ell(a_I)-|I|r}.$$  In fact $\mathbb{Q}_\ell[[\pi_1^\ell(X_{\bar k}, \bar x)]]^{\leq \ell^{-r}}$ is the completion of $\mathbb{Z}_\ell[[\pi_1^\ell(X_{\bar k}, \bar x)]]\otimes \mathbb{Q}_\ell$ at the norm $|\cdot |_r$.
\end{example}
The following proposition justifies the terminology \emph{convergent group ring}.
\begin{prop}\label{convergence-prop}
Suppose $\pi_1^\ell(X_{\bar k}, \bar x)$ is a finitely-generated free pro-$\ell$ group, and let $\rho: \pi_1^\ell(X_{\bar k}, \bar x)\to GL_n(\mathbb{Z}_\ell)$ be a continuous representation which is trivial mod $\ell^m$.  Then for any $0<r<m$, there exists a unique continuous ring homomorphism $\tilde \rho: \mathbb{Q}_\ell[[\pi_1^\ell(X_{\bar k}, \bar x)]]^{\leq \ell^{-r}}\to \mathfrak{gl}_n(\mathbb{Q}_\ell)$ making the diagram
$$\xymatrix{
\mathbb{Z}_\ell[[\pi_1^\ell(X_{\bar k}, \bar x)]] \ar[r]^-{\rho} \ar[d] & \mathfrak{gl}_n(\mathbb{Z}_\ell)\ar[d]\\
\mathbb{Q}_\ell[[\pi_1^\ell(X_{\bar k}, \bar x)]]^{\leq \ell^{-r}}\ar[r]^-{\tilde \rho} &\mathfrak{gl}_n(\mathbb{Q}_\ell)
}$$
commute, where the top horizontal arrow is the natural ring homomorphism induced by $\rho$, and the vertical arrows are the natural inclusions.
\end{prop}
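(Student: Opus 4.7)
The plan is to realize $\tilde\rho$ as the unique bounded extension of the $\mathbb{Q}_\ell$-linearization of $\rho$, using the description from Example \ref{convergence-example} of $\mathbb{Q}_\ell[[\pi_1^\ell(X_{\bar k}, \bar x)]]^{\leq \ell^{-r}}$ as the completion of $\mathbb{Z}_\ell[[\pi_1^\ell(X_{\bar k}, \bar x)]] \otimes \mathbb{Q}_\ell$ with respect to the $r$-Gauss norm $|\cdot|_r$. With this description in hand the problem reduces to a single boundedness estimate and a standard completion argument.

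First, I would fix free pro-$\ell$ generators $\gamma_1, \ldots, \gamma_s$ of $\pi_1^\ell(X_{\bar k}, \bar x)$ and use the isomorphism $\mathbb{Z}_\ell[[\pi_1^\ell(X_{\bar k}, \bar x)]] \cong \mathbb{Z}_\ell\langle\langle T_1, \ldots, T_s\rangle\rangle$ from Example \ref{noncommutative-power-series-example} with $T_i \leftrightarrow \gamma_i - 1$. The hypothesis $\rho \equiv \on{Id} \pmod{\ell^m}$ says precisely that $\rho(T_i) = \rho(\gamma_i) - \on{Id} \in \ell^m \mathfrak{gl}_n(\mathbb{Z}_\ell)$, so by submultiplicativity of the non-archimedean operator norm each monomial satisfies $|\rho(T^I)| \leq \ell^{-m|I|}$. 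For a polynomial $p = \sum a_I T^I$ with finitely many nonzero $a_I \in \mathbb{Q}_\ell$, the ultrametric inequality then yields the key estimate
$$|\rho(p)| \;\leq\; \max_I |a_I| \cdot |\rho(T^I)| \;\leq\; \max_I \ell^{-v_\ell(a_I) - m|I|} \;\leq\; \max_I \ell^{-v_\ell(a_I) - r|I|} \;=\; |p|_r,$$
where the final inequality uses $r < m$ (so that $\ell^{-m|I|} \leq \ell^{-r|I|}$ for every $|I| \geq 0$). Thus the $\mathbb{Q}_\ell$-linear extension of $\rho$ to the polynomial subring $\mathbb{Z}_\ell[[\pi_1^\ell(X_{\bar k}, \bar x)]] \otimes \mathbb{Q}_\ell$ is bounded by $1$ in operator norm.

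Since $\mathfrak{gl}_n(\mathbb{Q}_\ell)$ is Hausdorff and complete, this bounded $\mathbb{Q}_\ell$-linear map extends uniquely, by the universal property of completion, to a continuous $\mathbb{Q}_\ell$-linear map $\tilde\rho$ on $\mathbb{Q}_\ell[[\pi_1^\ell(X_{\bar k}, \bar x)]]^{\leq \ell^{-r}}$. The ring-homomorphism identity $\tilde\rho(xy) = \tilde\rho(x)\tilde\rho(y)$ holds on the dense polynomial subring by construction, and passes to the completion by joint continuity of multiplication in both rings. The diagram in the proposition commutes on $\mathbb{Z}_\ell[[\pi_1^\ell(X_{\bar k}, \bar x)]]$ by the very definition of $\tilde\rho$, and uniqueness is automatic from density of the polynomial subring together with the Hausdorff property of the target. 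I do not expect a serious obstacle here: the only substantive content is the single norm comparison above, which is precisely the mechanism that converts the hypothesis $\rho \equiv \on{Id} \pmod{\ell^m}$ into compatibility with the $r$-Gauss norm when $r < m$.
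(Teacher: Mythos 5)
Your proposal is correct and is essentially the paper's own argument: both reduce to the free generators $T_i=\gamma_i-1$, use $|\rho(T^I)|\leq \ell^{-m|I|}$ from triviality mod $\ell^m$, and deduce the norm bound $|\rho(p)|\leq |p|_r$ from $r<m$, with uniqueness coming from density. The only cosmetic difference is that you invoke the universal property of completion to extend the bounded map, whereas the paper writes the extension down directly as the convergent series $\sum_I a_I\rho(T^I)$; these are the same mechanism.
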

\begin{proof}
Uniqueness is clear from the density of $\mathbb{Z}_\ell[[\pi_1^\ell(X_{\bar k}, \bar x)]]\otimes \mathbb{Q}_\ell$ in $\mathbb{Q}_\ell[[\pi_1^\ell(X_{\bar k}, \bar x)]]^{\leq \ell^{-r}}$, so it suffices to construct $\tilde \rho$.

If $M\in \mathfrak{gl}_n(\mathbb{Q}_\ell)$, let $$|M|:=\ell^{\inf\{r\in \mathbb{Z}\mid \ell^r\cdot M\in \mathfrak{gl}_n(\mathbb{Z}_\ell)\}}.$$ Using the notation of Example \ref{convergence-example}, we choose topological generators $\lambda_1, \cdots, \lambda_m$ of $\pi_1^\ell(X_{\bar k}, \bar x)$ and set $T_i=1+\lambda_i\in \mathbb{Q}_\ell[[\pi_1^\ell(X_{\bar k}, \bar x)]]^{\leq \ell^{-r}}$.  Then we set $$\tilde \rho\left(\sum_I a_I T^I\right)=\sum_I a_I \rho(T^I).$$  This sum converges because $|\rho(T_i)|=|\rho(\gamma_i)-\on{Id}|\leq \ell^{-m}$, as $\rho$ is trivial mod $\ell^m$, so $|\rho(T^I)|\leq \ell^{-m|I|}$.  But we have $$|a_I\rho(T^I)|\leq |a_I|\ell^{-m|I|}$$ which tends to zero because $|a_I|< \ell^{r|I|}$ for $|I|$ large, with $r<m$.

To check continuity, we must check that $\tilde\rho$ is a bounded operator, i.e.~ that there exists $C>0$ such that $$\left|\tilde \rho\left(\sum_I a_I T^I\right)\right|\leq C\left|\sum_I a_I T^I\right|_r.$$   But we have 
$$
\left|\tilde \rho\left(\sum_I a_I T^I\right)\right| \leq \sup_I |a_I|_\ell \cdot \left|\tilde\rho(T^I)\right|
\leq \sup_I \ell^{-v_\ell(a_I)-m|I|}
\leq\sup_I\ell^{-v_\ell(a_I)-r|I|} 
= \left|\sum_I a_I T^I\right|_r
$$
so we may take $C=1$.
\end{proof}
\begin{remark}
The previous proposition is true in much greater generality (i.e. it holds if $X$ satisfies Condition \ref{condition-star} below), but this case admits a simple proof and is all we require, as we may reduce our main theorems to the case where $X$ is an affine curve.
\end{remark}
\subsection{Integral $\ell$-adic periods}\label{integral-l-adic-periods}
Theorems \ref{quasi-scalar-semisimple} and \ref{quasi-scalar-pi1} imply that $\mathbb{Q}_\ell[[\pi_1^\ell(X_{\bar k}, \bar x)]]$ admits a set of $\sigma_\alpha$-eigenvectors with dense span (in the $\mathscr{I}$-adic topology).  The purpose of this section is to deduce an analogous statement for $\mathbb{Q}_\ell[[\pi_1^\ell(X_{\bar k}, \bar x)]]^{\leq \ell^{-r}}$ for $r\gg 0$.

Consider the following condition on $\mathbb{Z}_\ell[[\pi_1^{\ell}(X_{\bar k}, \bar x)]]$: 
\begin{equation}\label{condition-star}\tag{$\star$}
\mathscr{I}^n/\mathscr{I}^{n+1}\text{ is $\mathbb{Z}_\ell$-torsion-free for all $n$.}
\end{equation}
This condition ensures that the natural map $\mathbb{Z}_\ell[[\pi_1^{\ell}(X_{\bar k}, \bar x)]]\to \mathbb{Q}_\ell[[\pi_1^{\ell}(X_{\bar k}, \bar x)]]$ is injective, and hence that $\on{gr}^{-i}_W\mathbb{Z}_\ell[[\pi_1^{\ell}(X_{\bar k}, \bar x)]]$ is $\mathbb{Z}_\ell$-torsion-free for all $i$. Condition \ref{condition-star} is satisfied if e.g.~$X$ is a smooth curve; the case of an affine curve (which is all we require) follows immediately from the isomorphism of Example \ref{noncommutative-power-series-example}.
\begin{theorem}\label{quasi-scalar-eigenvectors}
Let $\sigma_\alpha$ be as in Theorem \ref{quasi-scalar-pi1}, with $\alpha$ not a root of unity.  Suppose $X$ satisfies Condition \ref{condition-star}. Then there exits $r_\alpha>0$ such that for $r>r_\alpha$, $\mathbb{Q}_\ell[[\pi_1^\ell(X_{\bar k}, \bar x)]]^{\leq \ell^{-r}}$ contains a set of $\sigma_\alpha$-eigenvectors with dense span.  Moreover $W^{-n}\subset \mathbb{Q}_\ell[[\pi_1^\ell(X_{\bar k}, \bar x)]]^{\leq \ell^{-r}}$ admits a set of $\sigma_\alpha$-eigenvectors with eigenvalues in $\{\alpha^n, \alpha^{n+1}, \cdots\}$ and with dense span (in the topology defined by the $r$-Gauss norm).
\end{theorem}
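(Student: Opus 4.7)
The plan is to combine Theorem~\ref{quasi-scalar-semisimple} (semisimplicity of $\sigma_\alpha$ on every truncation $\mathbb{Q}_\ell[[\pi_1^\ell(X_{\bar k}, \bar x)]]/\mathscr{I}^N$) with a Lagrange-interpolation formula for the projection onto an eigenspace, and then bound the resulting $\ell$-adic denominators using that $\alpha$ is close to $1$. By Theorem~\ref{quasi-scalar-semisimple} together with Proposition~\ref{I-adic-weights}, $\sigma_\alpha$ acts semisimply on $\mathbb{Q}_\ell[[\pi_1^\ell(X_{\bar k}, \bar x)]]/\mathscr{I}^N$ with eigenvalues in $\{\alpha^0, \alpha^1, \ldots, \alpha^{2N-2}\}$, all distinct since $\alpha$ is not a root of unity. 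The projection onto the $\alpha^k$-eigenspace is the Lagrange interpolation
$$P_k^{(N)} := \prod_{\substack{0 \le j \le 2N-2 \\ j \ne k}} \frac{\sigma_\alpha - \alpha^j}{\alpha^k - \alpha^j},$$
and since $P_k^{(N)}$ and $P_k^{(N+1)}$ are both projections onto the $\alpha^k$-eigenspace along the sum of all other eigenspaces, they are compatible under the quotient map and assemble into an operator $P_k$ on $\mathbb{Q}_\ell[[\pi_1^\ell(X_{\bar k}, \bar x)]]$. The numerator of $P_k^{(N)}$ sends integral elements to integral elements (since $\sigma_\alpha$ preserves $\mathbb{Z}_\ell[[\pi_1^\ell(X_{\bar k}, \bar x)]]$), so all denominators are concentrated in the scalar $D_k^{(N)} := \prod_{j \ne k}(\alpha^k - \alpha^j) \in \mathbb{Z}_\ell$.

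The main estimate is the $\ell$-adic bound on $D_k^{(N)}$. Writing $\alpha = 1 + \ell^s u$ with $u \in \mathbb{Z}_\ell^\times$ and taking $s$ large enough (by choosing $\alpha$ close to $1$; one needs $s \ge 1$ for odd $\ell$ and $s \ge 2$ for $\ell = 2$) that $v_\ell(\alpha^m - 1) = s + v_\ell(m)$ for every nonzero integer $m$, one has
$$v_\ell\bigl(D_k^{(N)}\bigr) = \sum_{\substack{0 \le j \le 2N-2\\ j \ne k}}\bigl(s + v_\ell(|k-j|)\bigr) \le 2Ns + 2\, v_\ell((2N)!) \le CN$$
for a constant $C$ depending only on $\alpha$ and $\ell$, by Legendre's formula $v_\ell(M!) \le M/(\ell - 1)$. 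Setting $r_\alpha := C$, we conclude that for any integral $v \in \mathbb{Z}_\ell[[\pi_1^\ell(X_{\bar k}, \bar x)]]$ and every $k$, $v_N(\pi_N(P_k v)) \ge -r_\alpha N$; in particular $P_k(v) \in \mathbb{Q}_\ell[[\pi_1^\ell(X_{\bar k}, \bar x)]]^{\le \ell^{-r}}$ for every $r > r_\alpha$.

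For the refinement to $W^{-n}$: if $v \in W^{-n}$ then $v$ modulo $W^{-n-1}$ is a $\sigma_\alpha$-eigenvector of eigenvalue $\alpha^n$ by Theorem~\ref{quasi-scalar-pi1}, so the factor $(\sigma_\alpha - \alpha^n)$ appearing in $P_k^{(N)}$ for every $k \ne n$ annihilates the image of $v$ in $\on{gr}_W^{-n}$; inducting on the weight filtration yields $P_k(v) \in W^{-k}$ for $k \ge n$ and $P_k(v) = 0$ for $k < n$, and hence $v = \sum_{k \ge n} P_k(v)$. Because $P_k(v) \in W^{-k} \subset \mathscr{I}^{\lfloor (k-1)/2 \rfloor}$ by Proposition~\ref{W-adic-topology-prop}, the supremum defining $|P_k(v)|_r$ is taken only over $N \ge (k-1)/2$, and the uniform denominator bound above gives geometric decay $|P_k(v)|_r \le \ell^{-(r - r_\alpha)(k-1)/2}$, so the series $\sum_{k \ge n} P_k(v)$ converges to $v$ in the $r$-Gauss norm. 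A standard approximation argument using condition~\ref{condition-star} (every $w \in W^{-n}\mathbb{Q}_\ell[[\pi_1^\ell(X_{\bar k}, \bar x)]]^{\le \ell^{-r}}$ is a Gauss-norm limit of scaled integral lifts $\ell^{-m_N} v_N$ with $v_N \in W^{-n}(\mathbb{Z}_\ell[[\pi_1^\ell(X_{\bar k}, \bar x)]])$ lifting $\ell^{m_N}\pi_N(w)$, where $m_N = -v_N(\pi_N w)$) then promotes this to the density of $\sigma_\alpha$-eigenvectors with eigenvalues in $\{\alpha^n, \alpha^{n+1}, \ldots\}$.

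The principal obstacle is the uniform-in-$k$ bound on $v_\ell(D_k^{(N)})$: as $k$ varies the differences $|k-j|$ sweep different subsets of $[1, 2N-2]$, but Legendre's estimate $\sum_{m=1}^{M} v_\ell(m) \le M/(\ell-1)$ supplies a bound independent of $k$, and a little $2$-adic care secures the identity $v_\ell(\alpha^m - 1) = s + v_\ell(m)$. Once this estimate is in hand, everything else is a formal consequence of the semisimplicity input from Theorem~\ref{quasi-scalar-semisimple} and the construction of the convergent group ring as a completion of $\mathbb{Z}_\ell[[\pi_1^\ell(X_{\bar k}, \bar x)]]\otimes\mathbb{Q}_\ell$.
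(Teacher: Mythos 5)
Your proof is correct, and it reaches the theorem by a genuinely different mechanism than the paper's, even though both ultimately rest on the same arithmetic estimate. The paper does not write down spectral projectors; instead it constructs, for each weight $i$, a $\sigma_\alpha$-equivariant integral splitting $s_i^m$ of $W^{-i}/W^{-m}\twoheadrightarrow \on{gr}^{-i}_W$ after clearing a denominator $\ell^{v(i,m,\alpha)}$, where $v(i,m,\alpha)=\sum_{s=1}^{m-i-1}v_\ell(\alpha^s-1)$ is obtained by an induction on the length of the weight filtration through the annihilator of $\on{Ext}^1_{\mathbb{Z}_\ell[\sigma_\alpha]}(\mathbb{Z}_\ell[\sigma_\alpha]/(\sigma_\alpha-\alpha^i),\mathbb{Z}_\ell[\sigma_\alpha]/(\sigma_\alpha-\alpha^{m-1}))$ (Lemma \ref{l-adic-periods-bound}); these splittings are assembled into maps $\tilde s_i:\on{gr}^{-i}_W\to W^{-i}$, every eigenvector is shown to be of the form $s_i(y\bmod W^{-i-1})$, and density is proved by iteratively peeling off $w_j=s_{i+j}(z_j\bmod W^{-i-j-1})$. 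Your Lagrange-interpolation projectors $P_k^{(N)}$ package the same information more explicitly: the denominator $\prod_{j\ne k}(\alpha^k-\alpha^j)$ has valuation $\sum_{j\ne k}v_\ell(\alpha^{|k-j|}-1)$, which is controlled by exactly the Legendre-type bound the paper isolates as Lemma \ref{q-power-sum-bound}, and which is why your $r_\alpha$ comes out comparable to the paper's $2C(\alpha,\ell,1)$ (yours is slightly lossier since you separate $\alpha^k$ from eigenvalues on both sides, but this is immaterial). What your route buys is a cleaner, more uniform treatment: the decomposition $v=\sum_k P_k(v)$ and its Gauss-norm convergence replace both the existence step and the iterative density step, and the compatibility of the $P_k^{(N)}$ under truncation is immediate from equivariance. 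What the paper's route buys is the interpretation of the denominators as ``integral $\ell$-adic periods'' (the exact annihilators of the extension classes $e_{i,m}(\sigma_\alpha,x)$), which it wants for its own sake. Two points in your write-up deserve a sentence more care but are not gaps: the assertion that $P_k(v)\in W^{-k}$ (equivalently that the $\alpha^k$-eigenspace of each truncation lies in the image of $W^{-k}$) follows because the eigenvalues on $\on{gr}^{-j}_W$ are exactly $\alpha^j$ and are pairwise distinct, so the image of the $\alpha^k$-eigenspace in $\mathbb{Q}_\ell[[\pi_1^\ell(X_{\bar k},\bar x)]]/(W^{-k}+\mathscr{I}^N)$ must vanish; and in the final approximation step you should note that for $N\ge n$ any integral lift of $\ell^{m_N}\pi_N(w)$ automatically lies in $W^{-n}\mathbb{Z}_\ell[[\pi_1^\ell(X_{\bar k},\bar x)]]$, since it differs from $\ell^{m_N}w$ by an element of $\mathscr{I}^N\subset W^{-N}\subset W^{-n}$ and the integral weight filtration is defined by pullback.
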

Equivalently, if $y$ is a $\sigma_\alpha$-eigenvector in $\mathbb{Q}_\ell[[\pi_1^\ell(X_{\bar k}, \bar x)]]$, $-v_n(\pi_n(y))$ grows at most linearly in $n$.  We require several lemmas before giving the proof.
\begin{lemma}\label{l-adic-periods-bound}
Consider $\mathbb{Z}_\ell[[\pi_1^\ell(X_{\bar k}, \bar x)]]$ as a $\mathbb{Z}_\ell[\sigma_\alpha]$-module, with the $W^\bullet$-filtration.    Suppose $X$ satisfies Condition \ref{condition-star}.  Then for any $m\geq i+1$, $$\on{Ext}^1_{\mathbb{Z}_\ell[\sigma_\alpha]}(W^{-i}/W^{-i-1}, W^{-i-1}/W^{-m})$$ is annihilated by $\ell^{v(i,m,\alpha)}$, where $$v(i,m,\alpha)={\sum_{s=1}^{m-i-1} v_\ell(\alpha^s-1)}.$$
\end{lemma}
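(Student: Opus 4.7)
The plan is to reduce the Ext computation to a cokernel computation for $\sigma_\alpha - \alpha^i$ acting on $N := W^{-i-1}/W^{-m}$, and then bound that cokernel using the filtration of $N$ by the weight filtration, whose graded pieces carry scalar actions of $\sigma_\alpha$ by Theorem \ref{quasi-scalar-pi1}.

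First, by Theorem \ref{quasi-scalar-pi1} together with Condition \ref{condition-star}, the module $M := W^{-i}/W^{-i-1}$ is $\mathbb{Z}_\ell$-free, with $\sigma_\alpha$ acting as the scalar $\alpha^i$. Writing $M_0$ for the underlying free $\mathbb{Z}_\ell$-module, this yields a length-one free resolution of $M$ over $\mathbb{Z}_\ell[\sigma_\alpha]$:
\[
0 \to M_0 \otimes_{\mathbb{Z}_\ell} \mathbb{Z}_\ell[\sigma_\alpha] \xrightarrow{\sigma_\alpha - \alpha^i} M_0 \otimes_{\mathbb{Z}_\ell} \mathbb{Z}_\ell[\sigma_\alpha] \to M \to 0.
\]
Applying $\on{Hom}_{\mathbb{Z}_\ell[\sigma_\alpha]}(-,N)$, using tensor-Hom adjunction, and using that $M_0$ is free over $\mathbb{Z}_\ell$ (so that the cokernel of post-composition with $\sigma_\alpha - \alpha^i$ agrees with $\on{Hom}_{\mathbb{Z}_\ell}(M_0, -)$ applied to the cokernel on $N$), one identifies
\[
\on{Ext}^1_{\mathbb{Z}_\ell[\sigma_\alpha]}(M,N) \;\cong\; \on{Hom}_{\mathbb{Z}_\ell}\!\bigl(M_0,\; N/(\sigma_\alpha-\alpha^i)N\bigr).
\]
So it suffices to show that $Q := N/(\sigma_\alpha-\alpha^i)N$ is annihilated by $\ell^{v(i,m,\alpha)}$.

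To bound $Q$, consider the induced $W$-filtration $N = N_0 \supset N_1 \supset \cdots \supset N_{m-i-1}=0$ with $N_j := W^{-i-1-j}/W^{-m}$. By Condition \ref{condition-star} each graded piece $N_j/N_{j+1} \cong W^{-i-1-j}/W^{-i-2-j}$ is $\mathbb{Z}_\ell$-torsion-free, and by Theorem \ref{quasi-scalar-pi1}, $\sigma_\alpha$ acts on it as the scalar $\alpha^{i+1+j}$. Therefore $\sigma_\alpha-\alpha^i$ acts on $N_j/N_{j+1}$ as multiplication by $\alpha^i(\alpha^{j+1}-1)$, which (since $\alpha \in \mathbb{Z}_\ell^\times$) has $\ell$-adic valuation $v_\ell(\alpha^{j+1}-1)$; because the graded piece is torsion-free, its cokernel under this map is exactly killed by $\ell^{v_\ell(\alpha^{j+1}-1)}$.

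Finally, the snake lemma applied to $0 \to N_{j+1} \to N_j \to N_j/N_{j+1} \to 0$ (with vertical map $\sigma_\alpha - \alpha^i$) yields a right-exact sequence of cokernels, so the cokernel on $N_j$ is an extension of the cokernel on $N_j/N_{j+1}$ by a quotient of the cokernel on $N_{j+1}$. Since an extension of an abelian group killed by $a$ by one killed by $b$ is killed by $ab$, descending induction on $j$ shows
\[
Q \text{ is annihilated by } \prod_{j=0}^{m-i-2} \ell^{v_\ell(\alpha^{j+1}-1)} = \ell^{\sum_{s=1}^{m-i-1} v_\ell(\alpha^s-1)} = \ell^{v(i,m,\alpha)}.
\]
The only non-formal input is the scalar action on each graded piece (Theorem \ref{quasi-scalar-pi1}) and torsion-freeness (Condition \ref{condition-star}); the rest is bookkeeping, and there is no real obstacle beyond checking that the Ext identification above is correct.
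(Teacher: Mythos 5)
Your proof is correct and follows essentially the same route as the paper's: the same explicit two-term free resolution coming from the scalar action of $\sigma_\alpha$ on $W^{-i}/W^{-i-1}$, combined with d\'evissage of $W^{-i-1}/W^{-m}$ along the weight filtration, with the exponents $v_\ell(\alpha^s-1)$ adding up across the extension steps. The only difference is cosmetic — you convert $\on{Ext}^1$ to the cokernel of $\sigma_\alpha-\alpha^i$ first and then filter, while the paper filters first via the long exact sequence in the second variable and then computes each elementary $\on{Ext}^1$ from the resolution.
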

\begin{proof}
Note that Condition \ref{condition-star} implies that the natural map $\mathbb{Z}_\ell[[\pi_1^\ell(X_{\bar k}, \bar x)]]\to \mathbb{Q}_\ell[[\pi_1^\ell(X_{\bar k}, \bar x)]]$ is injective; hence $W^{-s}/W^{-s+1}$ is $\mathbb{Z}_\ell$-torsion-free for any $s$, and $\sigma_\alpha$ acts on it via the scalar $\alpha^s$.

We proceed by induction on $m$; the case $m=i+1$ is trivial.  For the induction step, from the short exact sequence $$0\to W^{-m+1}/W^{-m}\to W^{-i-1}/W^{-m}\to W^{-i-1}/W^{-m+1}\to 0$$ it suffices to show that $\on{Ext}^1_{\mathbb{Z}_\ell[\sigma_\alpha]}(W^{-i}/W^{-i-1}, W^{-m+1}/W^{-m})$ is annihilated by $\ell^{v_\ell(\alpha^{m-i-1}-1)}$.  But $\sigma_\alpha$ acts via the scalar $\alpha^i$ on $W^{-i}/W^{-i-1}$ and by $\alpha^{m-1}$ on $W^{-m+1}/W^{-m}$; thus it is enough to show that $$\on{Ext}^1_{\mathbb{Z}_\ell[\sigma_\alpha]}(\mathbb{Z}_\ell[\sigma_\alpha]/(\sigma_\alpha-\alpha^i), \mathbb{Z}_\ell[\sigma_\alpha]/(\sigma_\alpha-\alpha^{m-1}))$$ is annihilated by $\ell^{v_\ell(\alpha^{m-i-1}-1)}$.  But this is clear from the free resolution 
$$\xymatrix{
0\ar[r]& \mathbb{Z}_\ell[\sigma_\alpha]\ar[rr]^{\cdot(\sigma_\alpha-\alpha^i)} & & \mathbb{Z}_\ell[\sigma_\alpha] \ar[r] & \mathbb{Z}_\ell[\sigma_\alpha]/(\sigma_\alpha-\alpha^i)\ar[r] & 0.
}$$
\end{proof}
\begin{remark}
Note that the proof of this lemma required the semi-simplicity of the $\sigma_\alpha$-action on $\mathbb{Q}_\ell[[\pi_1^\ell(X_{\bar k}, \bar x)]]$ (i.e.~the full strength of Theorem \ref{quasi-scalar-pi1} --- Lemma \ref{quasi-scalar-h1} does not suffice).
\end{remark}
\begin{remark}
There are canonical classes $e_{i,m}(\sigma_\alpha, x)\in\on{Ext}^1_{\mathbb{Z}_\ell[\sigma_\alpha]}(W^{-i}/W^{-i-1}, W^{-i-1}/W^{-m})$ corresponding to the extensions $$0\to W^{-i-1}/W^{-m}\to W^{-i}/W^{-m}\to W^{-i}/W^{-i-1}\to 0.$$ We refer to the least $b$ such that $\ell^b$ annihilates $e_{i,m}(\sigma_\alpha, x)$ as an \emph{integral $\ell$-adic period} of $\pi_1^\ell(X_{\bar k}, \bar x)$, because these values measure the failure of the canonical $\sigma_\alpha$-equivariant isomorphism $$\mathbb{Q}_\ell[[\pi_1^\ell(X_{\bar k}, \bar x)]]/W^{-m}\mathbb{Q}_\ell[[\pi_1^\ell(X_{\bar k}, \bar x)]]\overset{\sim}{\to} \bigoplus_{i=0}^{m-1} \on{gr}_W^{-i}\mathbb{Q}_\ell[[\pi_1^\ell(X_{\bar k}, \bar x)]]$$ to preserve the integral structures on both sides. 
\end{remark}

\begin{lemma}\label{q-powers-bound}
Let $\ell$ be a prime and $q\in (1+\ell\mathbb{Z}_\ell)^{\times}$ if $\ell\not=2$, $q\in (1+4\mathbb{Z}_2)^{\times}$ if $\ell=2$.  Then 
$$v_\ell(q^k-1)=v_\ell(q-1)+v_\ell(k).$$
\end{lemma}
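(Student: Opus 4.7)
This is a standard ``lifting the exponent'' statement, and the plan is to reduce to two building-block cases and iterate. Set $a := v_\ell(q-1) \geq 1$ (resp.\ $\geq 2$ when $\ell = 2$) and write $q = 1 + \ell^a u$ with $u \in \mathbb{Z}_\ell^\times$. The claim to prove is $v_\ell(q^k - 1) = a + v_\ell(k)$ for every positive integer $k$ (the case $k \in \mathbb{Z}_\ell \setminus \mathbb{Z}$ then follows by continuity, but the integer case is all we need).

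The first step is the coprime case $\gcd(k, \ell) = 1$. Expanding $(1 + \ell^a u)^k - 1 = k \ell^a u + \sum_{j \geq 2} \binom{k}{j} \ell^{ja} u^j$, the leading term has valuation exactly $a$ (since $\ell \nmid k$), while every subsequent term has valuation $\geq 2a > a$; hence $v_\ell(q^k - 1) = a$, matching $a + v_\ell(k) = a$.

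The second step is the case $k = \ell$. Expanding $(1 + \ell^a u)^\ell - 1 = \ell^{a+1} u + \sum_{j=2}^{\ell-1} \binom{\ell}{j} \ell^{ja} u^j + \ell^{a\ell} u^\ell$, the middle terms have valuation $\geq 1 + 2a$ (using $\ell \mid \binom{\ell}{j}$ for $1 \leq j \leq \ell - 1$), and the last has valuation $a\ell$. For $\ell$ odd, one checks $1 + 2a$ and $a\ell$ both exceed $a+1$ because $a \geq 1$, so $v_\ell(q^\ell - 1) = a + 1$. For $\ell = 2$, the expansion is simply $2^{a+1} u + 2^{2a} u^2$, and the inequality $2a > a+1$ (i.e.\ $a \geq 2$) is precisely what the hypothesis $q \in 1 + 4\mathbb{Z}_2$ guarantees. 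Crucially, in both cases $q^\ell$ again lies in $1 + \ell^{a+1}\mathbb{Z}_\ell$, so the hypothesis on $q$ is inherited by $q^\ell$, allowing iteration.

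For the general case, write $k = \ell^b m$ with $\gcd(m, \ell) = 1$. Apply the first step to conclude $v_\ell(q^m - 1) = a$, so $q^m$ still satisfies the hypothesis of the lemma; then apply the second step $b$ times to the successive powers $q^m, q^{m\ell}, \ldots, q^{m\ell^{b-1}}$, each time increasing the valuation by $1$. This yields $v_\ell(q^k - 1) = a + b = v_\ell(q-1) + v_\ell(k)$, as desired. The only delicate point is the $\ell = 2$ case of the second step, where the stronger hypothesis is genuinely needed; this is why the statement of the lemma restricts to $q \in 1 + 4\mathbb{Z}_2$ rather than $q \in 1 + 2\mathbb{Z}_2$.
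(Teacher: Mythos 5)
Your proof is correct. The two-case decomposition (first $\gcd(k,\ell)=1$ via the binomial expansion, then $k=\ell$ with the separate treatment of $\ell=2$, then iteration) is the standard ``lifting the exponent'' argument, and every valuation estimate you make checks out: in the coprime case the leading term $k\ell^a u$ has valuation exactly $a$ while the tail has valuation $\geq 2a>a$; in the $k=\ell$ case the term $\ell^{a+1}u$ dominates because $1+2a>a+1$ and $a\ell>a+1$ for odd $\ell$, and because $2a>a+1$ exactly when $a\geq 2$ for $\ell=2$. You correctly identify that this last inequality is the reason the hypothesis must be $q\in 1+4\mathbb{Z}_2$ rather than $q\in 1+2\mathbb{Z}_2$ (e.g.\ $q=3$ gives $v_2(q^2-1)=3\neq 2$), and you correctly note that the exact valuation $a+1$ of $q^\ell-1$ is what licenses the iteration.

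The paper takes a different, more structural route: it simply invokes the fact that the group of principal units $(1+\ell\mathbb{Z}_\ell)^\times$ (resp.\ $(1+4\mathbb{Z}_2)^\times$) is pro-cyclic with $(1+\ell\mathbb{Z}_\ell)^k=1+k\ell\mathbb{Z}_\ell$ (resp.\ $(1+4\mathbb{Z}_2)^k=1+4k\mathbb{Z}_2$), from which the valuation identity follows at once by locating $q$ in the filtration by the subgroups $1+\ell^a\mathbb{Z}_\ell$. That cited group-theoretic fact is of course itself usually established by exactly the binomial computation you carry out, so your argument is essentially an unpacking of the paper's one-line proof into a self-contained elementary verification. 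What the paper's phrasing buys is brevity and a cleaner conceptual statement (the $k$-th power map shifts the unit filtration by $v_\ell(k)$); what yours buys is that nothing external needs to be quoted and the role of the $\ell=2$ hypothesis is made completely explicit. Either is acceptable here.
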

\begin{proof}
This follows from the fact that $(1+\ell\mathbb{Z}_\ell)^{\times}$ (resp.~$(1+4\mathbb{Z}_2)^{\times}$) is pro-cyclic with $(1+\ell\mathbb{Z}_\ell)^k=1+k\ell\mathbb{Z}_\ell$ (resp.~$(1+4\mathbb{Z}_2)^k=1+4k\mathbb{Z}_\ell$).
\end{proof}
\begin{lemma}\label{better-q-power-bound}
Let $\ell$ be a prime and $q\in \mathbb{Z}_\ell^\times$.  Let $r$ be the order of $q$ in $\mathbb{F}_\ell^*$ if $\ell\not=2$ and in $(\mathbb{Z}/4\mathbb{Z}^*)$ if $\ell=2$.  Then $$v_\ell(q^k-1)=\begin{cases} v_\ell(q^r-1)+v_\ell(k/r) \text{ if $r\mid k$}\\ 0 \text{ if $r\nmid k$ and $\ell\not=2$}\\ 1 \text{ if $r\nmid k$ and $\ell=2$}.\end{cases}$$ 
\end{lemma}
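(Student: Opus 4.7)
The strategy is to reduce the statement to the already-proved Lemma \ref{q-powers-bound} by first raising $q$ to the power $r$, at which point $q^r$ lies in the group where that lemma applies.

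First I would split into two cases according to whether $r \mid k$. Suppose $r \mid k$ and write $k = rm$. By the definition of $r$, we have $q^r \equiv 1 \pmod{\ell}$ (resp.~$q^r \equiv 1 \pmod 4$ if $\ell = 2$), so that $q^r \in (1+\ell\mathbb{Z}_\ell)^\times$ (resp.~$(1+4\mathbb{Z}_2)^\times$). Lemma \ref{q-powers-bound} applied to $q^r$ in place of $q$ then yields
$$v_\ell(q^k - 1) = v_\ell((q^r)^m - 1) = v_\ell(q^r - 1) + v_\ell(m) = v_\ell(q^r - 1) + v_\ell(k/r),$$
which is the desired formula in the first branch.

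Next suppose $r \nmid k$. If $\ell \neq 2$, then $r$ is the order of the image of $q$ in $\mathbb{F}_\ell^\times$, so $q^k \not\equiv 1 \pmod \ell$, and hence $v_\ell(q^k - 1) = 0$. If $\ell = 2$, then $r$ is the order of $q$ modulo $4$, so $r \in \{1,2\}$; the hypothesis $r \nmid k$ forces $r = 2$ and $k$ odd, meaning $q \equiv 3 \pmod 4$. Then $q^k \equiv 3^k \equiv 3 \pmod 4$, so $q^k - 1 \equiv 2 \pmod 4$ and $v_2(q^k - 1) = 1$.

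The only nontrivial step is the appeal to Lemma \ref{q-powers-bound}, which is already established; once one has carved out the right subgroup for $q^r$ to live in, everything else is a short case check. I expect no serious obstacle, since the argument is essentially a textbook lifting-the-exponent manipulation tailored to the two-adic subtlety that one must pass to $1 + 4\mathbb{Z}_2$ rather than $1 + 2\mathbb{Z}_2$.
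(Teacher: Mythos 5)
Your proof is correct and follows the same route as the paper: the case $r\mid k$ is handled by applying Lemma \ref{q-powers-bound} to $q^r$ and $k/r$, and the remaining two cases are direct consequences of the definition of $r$ (you merely spell out the short $\ell=2$ computation that the paper leaves implicit). No issues.
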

\begin{proof}
As in the statement, we consider the three cases $r\mid k,$ $r\nmid k \text{ and }\ell\not=2,$ and $r\nmid k \text{ and }\ell=2.$  The first case is immediate from Lemma \ref{q-powers-bound}, replacing $q$ with $q^r$ and $k$ with $k/r$.  The other two cases follow from the definition of $r$.
\end{proof}
\begin{lemma}\label{q-power-sum-bound}
Let $q, \ell, r$ be as in Corollary \ref{better-q-power-bound}.  Let $$C(q, \ell, k)=\frac{k}{r}\left(v_\ell(q^r-1)+\frac{1}{\ell-1}\right)$$ if $\ell\not=2$ and $$C(q, \ell, k)= \frac{k}{r} \left( v_\ell(q^r-1)+\frac{1}{\ell-1}+1\right)+\frac{1}{r}$$ if $\ell=2$.  Then $$\sum_{i=1}^k v_\ell(q^i-1)\leq C(q, \ell, k).$$
\end{lemma}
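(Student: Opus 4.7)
The plan is to apply Lemma \ref{better-q-power-bound} termwise and split the sum $\sum_{i=1}^k v_\ell(q^i-1)$ according to whether $r \mid i$ or $r \nmid i$. Writing $i = rj$ in the first part, I get
$$\sum_{\substack{1\le i\le k\\ r\mid i}} v_\ell(q^i-1) \;=\; \lfloor k/r\rfloor\cdot v_\ell(q^r-1) + \sum_{j=1}^{\lfloor k/r\rfloor} v_\ell(j),$$
and the second part contributes either $0$ (if $\ell\ne 2$) or the number of non-multiples of $r$ in $\{1,\ldots,k\}$ (if $\ell=2$).

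The key input for estimating $\sum_{j=1}^{N} v_\ell(j)$ is Legendre's formula $v_\ell(N!) = (N - s_\ell(N))/(\ell-1)$, where $s_\ell(N)$ is the sum of base-$\ell$ digits of $N$; in particular $\sum_{j=1}^{N} v_\ell(j) \le N/(\ell-1)$. Applied with $N=\lfloor k/r\rfloor \le k/r$, the contribution from the $r\mid i$ terms is at most $(k/r)\bigl(v_\ell(q^r-1) + \tfrac{1}{\ell-1}\bigr)$, which is exactly $C(q,\ell,k)$ in the case $\ell\ne 2$.

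For $\ell=2$, I also need to add the contribution from the $r\nmid i$ terms, each of which contributes $1$. Since $r \in \{1,2\}$, the count $k - \lfloor k/r\rfloor$ is bounded by $k(r-1)/r + (r-1)/r \le k/r + 1/r$. Adding this to the previous estimate gives
$$\tfrac{k}{r}\bigl(v_2(q^r-1) + 1\bigr) + \tfrac{k}{r} + \tfrac{1}{r} \;=\; \tfrac{k}{r}\bigl(v_2(q^r-1) + \tfrac{1}{\ell-1} + 1\bigr) + \tfrac{1}{r},$$
which matches $C(q,2,k)$.

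I do not anticipate any real obstacle here: the proof is essentially bookkeeping once Lemma \ref{better-q-power-bound} and Legendre's formula are in hand. The only delicate point is ensuring that the floor $\lfloor k/r\rfloor$ and the extra term $1/r$ in the $\ell=2$ case are handled correctly, and this works precisely because $r\le 2$ forces $(r-1)/r \le 1/r$.
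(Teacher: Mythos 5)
Your proposal is correct and follows essentially the same route as the paper: both apply Lemma \ref{better-q-power-bound} termwise, reduce the $r\mid i$ part to $\lfloor k/r\rfloor\, v_\ell(q^r-1)+\sum_{j\le \lfloor k/r\rfloor}v_\ell(j)$ and bound the latter by $\tfrac{k}{r(\ell-1)}$ (your Legendre's formula is the same computation as the paper's explicit sum $\lfloor k/(r\ell)\rfloor+\lfloor k/(r\ell^2)\rfloor+\cdots$ plus a geometric series), and both handle $\ell=2$ by counting the indices with $r\nmid i$, each contributing $1$, with the same $(k+1)/r$ bound. No gaps.
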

\begin{proof}
In the case $\ell\not=2$, we have by Lemma \ref{better-q-power-bound} that $v_\ell(q^i-1)=v_\ell(q^r-1)+v_\ell(i/r)$ if $r\mid k$ and $0$ otherwise.  Thus we have $$\sum_{i=0}^k v_\ell(q^i-1)=\sum_{i=1}^{\lfloor k/r \rfloor} v_\ell(q^{ri}-1)=\sum_{i=1}^{\lfloor k/r \rfloor} (v_\ell(q^r-1)+v_\ell(i)).$$  But this last satisfies
\begin{align*}
\sum_{i=1}^{\lfloor k/r \rfloor} (v_\ell(q^r-1)+v_\ell(i)) &=\lfloor k/r\rfloor v_\ell(q^r-1)+\lfloor \frac{k}{r\ell}\rfloor+\lfloor \frac{k}{r\ell^2}\rfloor+\lfloor \frac{k}{r\ell^3}\rfloor+\cdots\\
&\leq \frac{k}{r} v_\ell(q^r-1)+\frac{k}{r\ell(1-\frac{1}{\ell})}\\
&=\frac{k}{r}\left(v_\ell(q^r-1)+\frac{1}{\ell-1}\right)
\end{align*}
In the case $\ell=2$, and $r=1$, an identical argument shows that $$\sum_{i=0}^k v_\ell(q^i-1)\leq \frac{k}{r}\left(v_\ell(q^r-1)+\frac{1}{\ell-1}\right).$$  If $\ell=2$ and $r=2$, we have 
\begin{align*}
\sum_{i=1}^k v_\ell(q^i-1) &= \sum_{i \text{ odd}, 1\leq i\leq k} 1 +\sum_{i=1}^{\lfloor k/2\rfloor} (v_\ell(q^r-1)+v_\ell(i))\\
&\leq \frac{k+1}{r} +\frac{k}{r}v_\ell(q^r-1)+\frac{k}{\ell r}+\frac{k}{\ell^2 r}+\cdots\\
&= \frac{k}{r} \left( v_\ell(q^r-1)+\frac{1}{\ell-1}+1\right)+\frac{1}{r},
\end{align*}
which concludes the proof.
\end{proof}
\begin{proof}[Proof of Theorem \ref{quasi-scalar-eigenvectors}]
We claim that we may take $r_\alpha=2C(\alpha, \ell, 1)$, defined as in Lemma \ref{q-power-sum-bound}.  We first show that for $r>r_\alpha$, every $\sigma_\alpha$-eigenvector in $\mathbb{Q}_\ell[[\pi_1^\ell(X_{\bar k}, \bar x)]]$ in fact lies in $\mathbb{Q}_\ell[[\pi_1^\ell(X_{\bar k}, \bar x)]]^{\leq \ell^{-r}}$. 

  We claim that there exist unique $\sigma_\alpha$-equivariant splittings $s_i$ of the quotient maps $$W^{-i}\mathbb{Q}_\ell[[\pi_1^\ell(X_{\bar k}, \bar x)]]^{\leq \ell^{-r}}\to \on{gr}^{-i}_W\mathbb{Q}_\ell[[\pi_1^\ell(X_{\bar k}, \bar x)]]^{\leq \ell^{-r}},$$ for any $r>r_\alpha$; this will imply that any $\sigma_\alpha$-eigenvector in  $\mathbb{Q}_\ell[[\pi_1^\ell(X_{\bar k}, \bar x)]]$ in fact lies in $\mathbb{Q}_\ell[[\pi_1^\ell(X_{\bar k}, \bar x)]]^{\leq \ell^{-r}}$.  Indeed, by Lemma \ref{l-adic-periods-bound} and the Yoneda interpretation of $\on{Ext}^1$ in terms of extension classes, there exists a $\sigma_\alpha$-equivariant map $$s_i^m:\on{gr}^{-i}_W\mathbb{Z}_\ell[[\pi_1^\ell(X_{\bar k}, \bar x)]]\to W^{-i}\mathbb{Z}_\ell[[\pi_1^\ell(X_{\bar k}, \bar x)]]/W^{-m}\mathbb{Z}_\ell[[\pi_1^\ell(X_{\bar k}, \bar x)]]$$ such that the diagram
$$\xymatrix{
		&					&					& \on{gr}^{-i}_W\mathbb{Z}_\ell[[\pi_1^\ell(X_{\bar k}, \bar x)]] \ar[d]^{\cdot\ell^{v(i,m,\alpha)}}\ar[ld]_{s_i^m} & \\
0\ar[r] 	& W^{-i-1}/W^{-m}\ar[r] 	& W^{-i}/W^{-m} \ar[r] 	& \on{gr}^{-i}_W\mathbb{Z}_\ell[[\pi_1^\ell(X_{\bar k}, \bar x)]]\ar[r] 	&0
}$$
commutes, where $W^{-i}$ above denotes $W^{-i}\mathbb{Z}_\ell[[\pi_1^\ell(X_{\bar k}, \bar x)]]$, and $v(i,m,\alpha)$ is defined as in Lemma \ref{l-adic-periods-bound}.  Moreover $s_i^m$ is unique because $\alpha$ does not appear as an eigenvalue for the action of $\sigma_\alpha$ on $W^{-i-1}/W^{-m}$.  By uniqueness, the diagram 
$$\xymatrix{
 &&\on{gr}^{-i}_W\mathbb{Z}_\ell[[\pi_1^\ell(X_{\bar k}, \bar x)]]  \ar[d]^{\ell^{-v(i,m,\alpha)}\cdot s_i^m} \ar[lld]_{\ell^{-v(i,m+1,\alpha)}\cdot s_i^{m+1}} &\\
W^{-i}\mathbb{Q}_\ell[[\pi_1^\ell(X_{\bar k}, \bar x)]]/W^{-m-1}\mathbb{Q}_\ell[[\pi_1^\ell(X_{\bar k}, \bar x)]] \ar[rr] & & W^{-i}\mathbb{Q}_\ell[[\pi_1^\ell(X_{\bar k}, \bar x)]]/W^{-m}\mathbb{Q}_\ell[[\pi_1^\ell(X_{\bar k}, \bar x)]]
}$$
commutes, so, extending scalars, the maps $\{\ell^{-v(i,m,\alpha)}\cdot s_i^m\}_{m>i}$ define (by completeness of $\mathbb{Q}_\ell[[\pi_1^\ell(X_{\bar k}, \bar x)]]$ with respect to the $\mathscr{I}$-adic, hence $W^\bullet$ filtration) a $\sigma_\alpha$-equivariant map $$\tilde s_i: \on{gr}^{-i}_W\mathbb{Q}_\ell[[\pi_1^\ell(X_{\bar k}, \bar x)]] \to W^{-i}\mathbb{Q}_\ell[[\pi_1^\ell(X_{\bar k}, \bar x)]],$$ splitting the natural quotient map $W^{-i}\to \on{gr}^{-i}_W$.  We claim this map factors through $\mathbb{Q}_\ell[[\pi_1^\ell(X_{\bar k}, \bar x)]]^{\leq \ell^{-r}}$ for $r> 2C(\alpha, \ell, 1)$, giving the desired maps $s_i$.

But indeed, for $y\in \on{gr}^{-i}_W\mathbb{Z}_\ell[[\pi_1^\ell(X_{\bar k}, \bar x)]]$, $$v_n(\pi_n(\tilde s_i(y)))\geq - v(i, 2n, \alpha)$$ by definition of the $\tilde s_i$ and by Proposition \ref{W-adic-topology-prop} (using that $W^{-2n-1}\subset \mathscr{I}^n$).  But by Lemma \ref{q-power-sum-bound}, $$v(i, 2n, \alpha)\leq C(\alpha,\ell,2n-i-1);$$ and $C(\alpha, \ell, 2n-i-1)-nr\to \infty$ as $n\to \infty$, as we've chosen $r>2C(\alpha, \ell,1)$.  Thus $$v_n(\pi_n(\tilde s_i(y)))+nr\to \infty$$ and $\tilde s_i$ factors though $\mathbb{Q}_\ell[[\pi_1^\ell(X_{\bar k}, \bar x)]]^{\leq \ell^{-r}}$ as desired.  Let $$s_i: \on{gr}^{-i}_W\mathbb{Q}_\ell[[\pi_1^\ell(X_{\bar k}, \bar x)]] \to W^{-i}\mathbb{Q}_\ell[[\pi_1^\ell(X_{\bar k}, \bar x)]]^{\leq \ell^{-r}}$$ be the induced map.

 Now let $y\in \mathbb{Q}_\ell[[\pi_1^\ell(X_{\bar k}, \bar x)]]$ be a non-zero $\sigma_\alpha$-eigenvector; then there exists some maximal $i$ such that $y\in W^{-i}$, as the $W$-adic topology is separated by Proposition \ref{W-adic-topology-prop}.  Then we have $\sigma_\alpha y=\alpha^iy$, because $y\not=0 \bmod W^{-i-1}$. Now we claim $y=s_i(y \bmod W^{-i-1})$; indeed, $y-s_i(y \bmod W^{-i-1})$ is a $\sigma_\alpha$-eigenvector with eigenvalue $\alpha^i$, contained in $W^{-i-1}$, hence zero.  Thus $y$ is in $W^{-i}\mathbb{Q}_\ell[[\pi_1^\ell(X_{\bar k}, \bar x)]]^{\leq \ell^{-r}}$, as desired.
 
We now check that the $\sigma_\alpha$-eigenvectors in $W^{-i}\mathbb{Q}_\ell[[\pi_1^\ell(X_{\bar k}, \bar x)]]^{\leq \ell^{-r}}$ are dense in the topology defined by the Gauss norm.  Given $z\in W^{-i}\mathbb{Q}_\ell[[\pi_1^\ell(X_{\bar k}, \bar x)]]^{\leq \ell^{-r}}$, let $z_0=z, w_0=s_i(z_0\bmod W^{-i-1})$ and in general,  $$z_j=z_{j-1}-w_{j-1}, w_j=s_{i+j}(z_j\bmod W^{-i-j-1}).$$  Then the $w_j$ are $\sigma_\alpha$-eigenvectors, so it suffices to show that $$\sum w_j\to z,$$ or equivalently that $|z_n|_r\to 0$.  We leave this as an elementary exercise, again using Lemma \ref{q-power-sum-bound}.
\end{proof}
\begin{remark}\label{r-alpha-remark}
Note that we can take $r_\alpha=2C(\alpha, \ell, 1)$, defined as in Lemma \ref{q-power-sum-bound}, by the very first line in the proof above.  If $H^1(X_{\bar k}, \mathbb{Q}_\ell)$ is pure of weight $i$ ($i=1,2$), we may take $r_\alpha=C(\alpha^i, \ell,1)$ (as in this case the weight filtration equals the $\mathscr{I}$-adic filtration, up to renumbering).
\end{remark}



\section{Applications to arithmetic and geometric representations}\label{applications-section}
\subsection{Main Results}
We now prepare for the proof of Theorem \ref{main-arithmetic-result}.
\begin{lemma}\label{subquotient-lemma}
Suppose that $$\rho: \pi_1^{\text{\'et}}(X_{\bar k}, \bar x)\to GL_n(\mathbb{Z}_\ell)$$ is an arithmetic representation (defined as in Definition \ref{arithmetic-defn}), which is trivial mod $\ell^r$, and that $\rho\otimes \mathbb{Q}_\ell$ is irreducible.  Then there is a finite extension $k\subset k'$ and a representation $\beta: \pi_1^{\text{\'et}}(X_{k'}, \bar x)\to GL_n(\mathbb{Z}_\ell)$ such that 
\begin{itemize}
\item $\rho$ is a subquotient of $\beta|_{\pi_1^{\text{\'et}}(X_{\bar k}, \bar x)}$, and
\item $\beta|_{\pi_1^{\text{\'et}}(X_{\bar k}, \bar x)}$ is trivial mod $\ell^r$.
\end{itemize}
\end{lemma}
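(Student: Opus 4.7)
The strategy is a Jordan--H\"older / socle manipulation, carried out $\BQ_\ell$-linearly, followed by an averaging argument that produces the required $\BZ_\ell$-lattice. Write $\pi_0 = \pi_1^{\text{\'et}}(X_{\bar k},\bar x)$. By the definition of ``arithmetic'', I would first fix a finite extension $k_0/k$ and a representation $\tilde\rho \colon \pi_1^{\text{\'et}}(X_{k_0},\bar x)\to GL_N(\BZ_\ell)$ for which $\rho$ is a $\pi_0$-subquotient of $\tilde\rho|_{\pi_0}$; the goal is to massage $\tilde\rho$ into a $\pi_1^{\text{\'et}}(X_{k'})$-representation $\beta$ (for some finite $k'/k_0$) whose restriction to $\pi_0$ still has $\rho$ as a subquotient and is again trivial mod $\ell^r$.

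Next, I would exploit the canonicality of the socle filtration of $\tilde\rho|_{\pi_0}\otimes \BQ_\ell$: every term is automatically $\pi_1^{\text{\'et}}(X_{k_0})$-stable. Since $\rho\otimes\BQ_\ell$ is irreducible and appears as a Jordan--H\"older factor, it occurs as a direct summand of some graded piece of this filtration. Replacing $\tilde\rho$ by (an integral model of) that graded piece, one may assume $\tilde\rho|_{\pi_0}\otimes\BQ_\ell$ is semisimple and contains $\rho\otimes\BQ_\ell$ as a $\pi_0$-summand. The finite set of isomorphism classes of $\pi_0$-irreducibles appearing in $\tilde\rho|_{\pi_0}\otimes\BQ_\ell$ carries a $G_{k_0}$-action, so after enlarging $k_0$ to a finite extension $k'$ that fixes the class $[\rho\otimes\BQ_\ell]$, the $\rho$-isotypic component $V$ becomes $\pi_1^{\text{\'et}}(X_{k'})$-stable. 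Replace $\tilde\rho$ once more by this isotypic component; then one has a $\pi_1^{\text{\'et}}(X_{k'})$-representation on a $\BQ_\ell$-vector space $V$ satisfying $V \cong (\rho\otimes\BQ_\ell)^{\oplus m}$ as $\pi_0$-representations.

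It remains to produce the lattice. Take $L_0 = \rho^{\oplus m}\subset V$, which is $\pi_0$-stable and trivial mod $\ell^r$ on $\pi_0$ by hypothesis on $\rho$. Set
\[
L := \sum_{h\in \pi_1^{\text{\'et}}(X_{k'})} \tilde\rho(h)\cdot L_0.
\]
Compactness of $\pi_1^{\text{\'et}}(X_{k'})$ and continuity of $\tilde\rho$ imply that $\tilde\rho(\pi_1^{\text{\'et}}(X_{k'}))$ is bounded in $GL(V)$, so the above sum is a bounded $\BZ_\ell$-submodule of $V$, hence a lattice. It is $\pi_1^{\text{\'et}}(X_{k'})$-stable by construction. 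For mod-$\ell^r$ triviality, given $\gamma\in \pi_0$, $h\in \pi_1^{\text{\'et}}(X_{k'})$, and $x\in L_0$, the normality of $\pi_0$ gives $h^{-1}\gamma h\in \pi_0$, hence $\tilde\rho(h^{-1}\gamma h)x - x\in \ell^r L_0$, and therefore $\gamma\cdot\tilde\rho(h)x - \tilde\rho(h)x \in \tilde\rho(h)\ell^r L_0 \subset \ell^r L$. Since such $\tilde\rho(h)x$ generate $L$, $\pi_0$ acts trivially on $L/\ell^r L$. Take $\beta$ to be the representation on $L$; then $\rho$ is a $\pi_0$-subquotient of $\beta|_{\pi_0}$ and $\beta|_{\pi_0}$ is trivial mod $\ell^r$, as required.

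\textbf{Main obstacle.} The conceptually delicate point is reconciling lattice-level triviality mod $\ell^r$ with the socle and isotypic operations, which are most natural at the $\BQ_\ell$-level. The key observation that makes everything work is that one can take the almost tautological lattice $L_0 = \rho^{\oplus m}$ inside the $\rho$-isotypic component; the averaging then propagates $\pi_0$-triviality to the $\pi_1^{\text{\'et}}(X_{k'})$-stabilized lattice precisely because $\pi_0$ is normal in $\pi_1^{\text{\'et}}(X_{k'})$.
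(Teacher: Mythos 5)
Your proposal is correct and follows essentially the same route as the paper's proof: reduce to the semisimple case via the canonicity of the socle filtration, isolate the part of the representation generated by $\rho\otimes\mathbb{Q}_\ell$ over the arithmetic fundamental group, and then produce the lattice by summing translates of an integral model over the compact group $\pi_1^{\text{\'et}}(X_{k'},\bar x)$, with normality of the geometric fundamental group giving triviality mod $\ell^r$. The only (cosmetic) difference is that you enlarge $k'$ so as to work with the $\rho$-isotypic component, whereas the paper works directly with the minimal $\pi_1^{\text{\'et}}(X_{k'},\bar x)$-subrepresentation containing $\rho\otimes\mathbb{Q}_\ell$, which may mix in Galois conjugates $\rho^\sigma$; both variants work.
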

\begin{proof}
By assumption, there exists a finite extension $k\subset k'$ and a representation $$\gamma: \pi_1^{\text{\'et}}(X_{k'}, \bar x)\to GL_n(\mathbb{Z}_\ell)$$ such that $\rho$ arises as a subquotient of $\gamma|_{\pi_1^{\text{\'et}}(X_{\bar k}, \bar x)}$.  Let $0=V^0\subset V^1 \subset \cdots V^r=\gamma$ be the socle filtration of $\gamma|_{\pi_1^{\text{\'et}}(X_{\bar k}, \bar x)}\otimes\mathbb{Q}_\ell$ (viewed as a representation of $\pi_1^{\text{\'et}}(X_{\bar k}, \bar x)$), i.e. $V^i$ is the largest subrepresentation of $\gamma|_{\pi_1^{\text{\'et}}(X_{\bar k}, \bar x)}$ such that $V^i/V^{i-1}$ is semi-simple as a $\pi_1^{\text{\'et}}(X_{\bar k}, \bar x)$-representation.  As the socle filtration is canonical, each $V^i/V^{i-1}$ also extends to a representation of $\pi_1^{\text{\'et}}(X_{k'}, \bar k)$.  As $\rho\otimes \mathbb{Q}_\ell$ is irreducible, it arises as a direct summand of $V^i/V^{i-1}$ for some $i$ (viewed as a $\pi_1^{\text{\'et}}(X_{\bar k}, \bar x)$-representation).  Thus by replacing $\gamma$ with $(V^i\cap \gamma)/(V^{i-1}\cap \gamma)$, we may assume $\gamma|_{\pi_1^{\text{\'et}}(X_{\bar k}, \bar x)}\otimes \mathbb{Q}_\ell$ is semisimple.

Let $W\subset V^i/V^{i-1}$ be the minimal sub-$\pi_1^{\text{\'et}}(X_{k'}, \bar x)$-representation containing $\rho\otimes \mathbb{Q}_\ell$.  As a $\pi_1^{\text{\'et}}(X_{\bar k}, \bar x)$-representation, $W$ splits as a finite direct sum of the form $$W=\bigoplus (\rho^{\sigma}\otimes \mathbb{Q}_\ell)^{\alpha_\sigma},$$ where $\rho^{\sigma}$ denotes the representation obtained by pre-composing $\rho$ with an automorphism $\sigma$ of $\pi_1^{\text{\'et}}(X_{\bar k}, \bar x)$.  Let $\beta$ be the $\mathbb{Z}_\ell$-submodule of $W$ spanned by $\rho^\tau$, for $\tau\in \pi_1^{\text{\'et}}(X_{k'}, \bar x).$  $\beta$ is a lattice by the compactness of $G_k$, and $\rho$ is clearly a subquotient of $\beta$.  Moreover $\beta$ is spanned by the modules $\rho^\tau$, each of which is trivial mod $\ell^r$ (as representations of $\pi_1^{\text{\'et}}(X_{\bar k}, \bar x)$), and hence is trivial mod $\ell^r$ as desired.  
\end{proof}

\begin{proof}[Proof of Theorem \ref{main-arithmetic-result}]
By \cite{deligne2}, there exists a curve $C/k$ and a map $C\to X$ such that $$\pi_1^{\text{\'et}}(C_{\bar k}, \bar x)\to \pi_1^{\text{\'et}}(X_{\bar k}, \bar x)$$ is surjective for any geometric point $\bar x$ of $C$. We may assume $C$ is affine by deleting a point, which does not affect the surjectivity of the map in question.  Thus we may immediately replace $X$ with $C$; observe that $\pi_1^\ell(C_{\bar k}, \bar x)$ is a finitely-generated free pro-$\ell$ group and in particular satisfies Condition \ref{condition-star}.  After replacing $k$ with a finite extension $k'$, we may assume $\bar x$ comes from a rational point of $C$, giving a natural action of $G_{k'}$ on $\pi_1^{\ell}(C_{\bar k}, \bar x)$.    Thus we replace $k$ with $k'$ and $X$ with $C$ (and we rename $C$ as $X$ in what follows).

By Theorem \ref{quasi-scalar-pi1}, there exists $\alpha\in \mathbb{Z}_\ell^\times$, not a root of unity, and $\sigma_\alpha \in G_k$ such that $\sigma_\alpha$ acts on $\on{gr}^{-i}_W\mathbb{Q}_\ell[[\pi_1^{\text{\'et}, \ell}(X_{\bar k}, \bar x)]]$ via $\alpha^i\cdot \on{Id}$.  Then by Theorem \ref{quasi-scalar-eigenvectors}, there exists $r_\alpha>0$ such that the $\sigma_\alpha$-action on $\mathbb{Q}_\ell[[\pi_1^{\ell}(X_{\bar k}, \bar x)]]^{\leq \ell^{-r}}$ admits a set of eigenvectors with span dense in the $r$-Gauss norm topology, for any $r>r_\alpha$.  Set $N$ to be the least integer greater than $r_\alpha$, and choose $r$ with $r_\alpha<r<N$.

Now let $$\rho:\pi_1^{\text{\'et}}(X_{\bar k}, \bar x)\to GL_n(\mathbb{Z}_\ell)$$ be an arithmetic representation, as in the statement of the theorem, such that $\rho$ is trivial mod $\ell^N$.  We claim that $\rho$ is in fact unipotent.  Let $0=V_0\subset V_1\subset \cdots$ be the socle filtration of $\rho\otimes\mathbb{Q}_\ell$; then replacing $\rho$ with $(\rho \cap V_i)/(\rho \cap V_{i-1})$ (which is arithmetic, as it is a subquotient of an arithmetic representation), we may assume $\rho\otimes \mathbb{Q}_\ell$ is semisimple, say $\rho\otimes \mathbb{Q}_\ell=\oplus_i W_i$ with $W_i$ irreducible.  Replacing $\rho$ with $\rho\cap W_i$ for any $i$, we may assume $\rho\otimes \mathbb{Q}_\ell$ is irreducible.  Now we may apply Lemma \ref{subquotient-lemma}, so we may (at the cost of losing irreducibility), assume that there exists a finite extension $k'/k$ and a continuous representation $$\beta: \pi_1^{\text{\'et}}(X_{k'}, \bar x)\to GL_n(\mathbb{Z}_\ell)$$ such that $\beta|_{\pi_1^{\text{\'et}}(X_{\bar k}, \bar x)}$ is trivial mod $\ell^N$, and such that $\rho$ is a subquotient of $\beta|_{\pi_1^{\text{\'et}}(X_{\bar k}, \bar x)}$.  It suffices to show that $\beta|_{\pi_1^{\text{\'et}}(X_{\bar k}, \bar x)}$ is unipotent.

Let $m$ be such that $\sigma_\alpha^m\in G_{k'}$.

As $$\ker(GL_n(\mathbb{Z}_\ell)\to GL_n(\mathbb{Z}/\ell^N\mathbb{Z}))$$ is a pro-$\ell$-group, $\beta|_{\pi_1^{\text{\'et}}(X_{\bar k}, \bar x)}$ factors through $\pi_1^{\ell}(X_{\bar k}, \bar x)$.  Thus by Proposition \ref{convergence-prop}, $\beta$ induces a $\sigma_\alpha^m$-equivariant map $$\mathbb{Q}_\ell[[\pi_1^{\ell}(X_{\bar k}, \bar x)]]^{\leq \ell^{-r}}\overset{\tilde \beta}{\longrightarrow} \mathfrak{gl}(\mathbb{Q}_\ell),$$ where the $\sigma_\alpha^m$-action on $\pi_1^{\ell}(X_{\bar k}, \bar x)$ comes from our choice of rational basepoint, and the action on $\mathfrak{gl}_n(\mathbb{Q}_\ell)$ comes from the the fact that $\beta|_{\pi_1^{\text{\'et}}(X_{\bar k}, \bar x)}$ extends to a representation of $\pi_1^{\text{\'et}}(X_{k'}, \bar x)$, by the previous paragraph.  

Now $\mathfrak{gl}_n(\mathbb{Q}_\ell)$ is a finite-dimensional vector space, so the action of $\sigma_{\alpha}^m$ on $\mathfrak{gl}_n(\mathbb{Q}_\ell)$ only has finitely many eigenvalues.  In particular, for $s\gg 0, \alpha^s$ is not an eigenvalue of the  $\sigma_{\alpha}^m$-action on $\mathfrak{gl}_n(\mathbb{Q}_\ell)$.  Thus by Theorem \ref{quasi-scalar-eigenvectors}, $$\tilde \beta(W^{-i}\mathbb{Q}_\ell[[\pi_1^{\ell}(X_{\bar k}, \bar x)]]^{\leq \ell^{-r}})=0$$ for $i\gg0$, as $W^{-i}\mathbb{Q}_\ell[[\pi_1^{\ell}(X_{\bar k}, \bar x)]]^{\leq \ell^{-r}}$ admits a set of $\sigma_\alpha$-eigenvectors with dense span and with associated eigenvalues in $\{\alpha^i, \alpha^{i+1}, \cdots\}$.  Hence in particular $\tilde \beta(W^{-i}\mathbb{Z}_\ell[[\pi_1^{\ell}(X_{\bar k}, \bar x)]])=0$, where we may consider $\mathbb{Z}_\ell[[\pi_1^{\ell}(X_{\bar k}, \bar x)]]$ as a sub-algebra of $\mathbb{Q}_\ell[[\pi_1^{\ell}(X_{\bar k}, \bar x)]]$ because Condition \ref{condition-star} is satisfied by the first paragraph of this proof.

But by Proposition \ref{W-adic-topology-prop} (and again using Condition \ref{condition-star}), the $W$-adic topology on $\mathbb{Z}_\ell[[\pi_1^{\ell}(X_{\bar k}, \bar x)]]$ is the same as the $\mathscr{I}$-adic topology --- hence $\tilde\beta(\mathscr{I}^t)=0$ for $t\gg 0$.  But this immediately implies that $\beta|_{\pi_1^{\text{\'et}}(X_{\bar k}, \bar x)}$, and hence $\rho$, is unipotent as desired.
\end{proof}
\begin{remark}\label{bound-observation-2}
Observe that $N$ only depends on $X$ and $\ell$, and not on any of the parameters of $\rho$ (for example, its dimension).  To our knowledge this was not expected.  Indeed, if $X$ is an affine curve, $N(X, \ell)$ only depends on the index of the image of the natural Galois representation $$G_k\to GL(H^1(X_{\bar k}, \mathbb{Z}_\ell))$$ in its Zariski-closure, by the proof of Theorem \ref{quasi-scalar-pi1}.  If this index is $1$ for some $\ell>2$ (as is expected to be the case for almost all $\ell$ (see e.g. \cite[\S 10]{serreladic}, \cite{wintenberger}, and \cite{cadoretmoonen} for discussion of the case where $H^1(X_{\bar k}, \mathbb{Z}_\ell$) is pure of weight $1$ --- as far as we know, the mixed case has not been conjectured in the literature, though it seems natural to do so), we may take $N(X, \ell)=1$, by choosing $\alpha$ in Theorem \ref{quasi-scalar-pi1} to be a topological generator of $\mathbb{Z}_\ell^\times$, and using Remark \ref{r-alpha-remark}.
\end{remark}
We may now prove Corollary \ref{main-geometric-cor}.

\begin{proof}[Proof of Corollary \ref{main-geometric-cor}]
Let $k_0\subset k$ be a finitely generated subfield over which $X$ is defined; suppose that $\rho$ arises as a subquotient of the monodromy representation on $R^i\pi_*\underline{\mathbb{Z}_\ell}$ for some smooth proper morphism $\pi: Y\to X$.  Then there exists a finitely generated $k_0$-algebra $R\subset k$, a proper $R$-scheme $\mathcal{Y}$, and a smooth proper map of $R$-schemes $$\tilde \pi: \mathcal{Y}\to X_R$$ such that $\pi$ is the base change  of $\tilde\pi$ along the inclusion $R\subset k$.   Quotienting out torsion we may replace $R^i\tilde{\pi}_*\underline{\mathbb{Z}_\ell}$ by a lisse subquotient.  Now specializing to any closed point of $\on{Spec}(R)$, we see that $\rho|_{\pi_1(X_{\bar k})}$ is arithmetic, so we are done by Theorem \ref{main-arithmetic-result}.

Observe that the $N$ from Theorem \ref{main-arithmetic-result} only depends on our choice of model of $X$ over $k_0$, which do not depend on $\rho$ in any way, so we have the desired uniformity in $\rho$.
\end{proof}

\subsection{Explicit Bounds}\label{bounds-section} In some cases, one can make the $N$ appearing in Theorem \ref{main-arithmetic-result} explicit.  Indeed, if $C$ is an affine curve $N(C, \ell)$ only depends on the index of the representation $$G_k\to GL(H^1(C_{\bar k}, \mathbb{Z}_\ell))$$ in its Zariski closure (see Remark \ref{bound-observation-2}).  Thus we now turn to cases where we understand this representation.

Our main example will be the case $$X=\mathbb{P}^1_k\setminus \{x_1, \cdots, x_n\},$$ where $x_1,\cdots, x_n\in \mathbb{P}^1(k)$.  Unlike Theorem \ref{main-arithmetic-result}, this theorem works in arbitrary characteristic.

\begin{theorem}\label{p1-main-theorem}
Let $k$ be a finitely generated field, and let $$X=\mathbb{P}^1_{\bar k}\setminus\{x_1, \cdots, x_n\},$$ for $x_1, \cdots, x_n\in \mathbb{P}^1(k)$.     Let $\ell$ be prime different from the characteristic of $k$ and $q\in \mathbb{Z}_\ell^\times$ any element of the  image of the cyclotomic character $$\chi: \on{Gal}(\overline{k}/k)\to \mathbb{Z}_\ell^\times;$$ let $s$ be the order of $q$ in $\mathbb{F}_\ell^\times$ if $\ell\not=2$ and in $(\mathbb{Z}/4\mathbb{Z})^\times$ if $\ell=2$.  Let $\epsilon=1$ if $\ell=2$ and $0$ otherwise.  Let $$\rho: \pi_1^{\text{\'et}}(X_{\bar k})\to GL_m(\mathbb{Z}_\ell)$$ be an arithmetic representation such that $\rho|_{\pi_1(X_{\bar k}, \bar x)}$ is trivial mod $\ell^N$, with  $$N>\frac{1}{s}\left(v_\ell(q^s-1)+\frac{1}{\ell-1}+\epsilon\right).$$  Then  $\rho$ is unipotent. 
\end{theorem}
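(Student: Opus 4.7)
The plan is to rerun the argument of Theorem \ref{main-arithmetic-result} in the special case $X=\mathbb{P}^1_k\setminus\{x_1,\dots,x_n\}$, taking advantage of three features of this situation: (a) $\pi_1^\ell(X_{\bar k},\bar x)$ is a finitely generated free pro-$\ell$ group, so Condition \ref{condition-star} is automatic and Example \ref{noncommutative-power-series-example} applies; (b) since each puncture is $k$-rational, $H^1(X_{\bar k},\mathbb{Q}_\ell)\cong\mathbb{Q}_\ell(-1)^{\oplus(n-1)}$ is pure of weight $2$ and $G_k$ acts on it purely through the cyclotomic character $\chi$; and (c) consequently the Hodge-Tate input of Lemma \ref{quasi-scalar-h1} is entirely avoidable, which is precisely why the argument works in arbitrary characteristic prime to $\ell$. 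We may assume $q$ is not a root of unity, for otherwise $v_\ell(q^s-1)=\infty$ and the hypothesis $N>\tfrac{1}{s}(v_\ell(q^s-1)+\tfrac{1}{\ell-1}+\epsilon)$ is vacuous.

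Fix any $\sigma\in G_k$ with $\chi(\sigma)=q$. By Proposition \ref{abelianization-prop}, $\sigma$ acts on $\mathscr{I}(\bar x)/\mathscr{I}(\bar x)^2\cong H_1(X_{\bar k},\mathbb{Q}_\ell)\cong\mathbb{Q}_\ell(1)^{\oplus(n-1)}$ as the scalar $q$, and by multiplicativity of the $\mathscr{I}$-adic filtration, as the scalar $q^k$ on $\mathscr{I}^k/\mathscr{I}^{k+1}$. This replaces Theorem \ref{quasi-scalar-pi1} in our setting; and since the action on each graded piece is already scalar, no analog of the semisimplicity result Theorem \ref{quasi-scalar-semisimple} is needed. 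I would then rerun the argument of Theorem \ref{quasi-scalar-eigenvectors} using the $\mathscr{I}$-adic filtration itself as the weight filtration (cf.\ Remark \ref{r-alpha-remark}) -- this is exactly where the factor-of-two degradation present in the general case is avoided. The verbatim analog of Lemma \ref{l-adic-periods-bound} shows that $\on{Ext}^1_{\mathbb{Z}_\ell[\sigma]}(\mathscr{I}^i/\mathscr{I}^{i+1},\mathscr{I}^{i+1}/\mathscr{I}^m)$ is annihilated by $\ell^{v(i,m,q)}$ with $v(i,m,q):=\sum_{j=1}^{m-i-1}v_\ell(q^j-1)$, and assembling the splittings $\tilde s_i$ exactly as in the proof of Theorem \ref{quasi-scalar-eigenvectors} produces $\sigma$-equivariant lifts factoring through $\mathbb{Q}_\ell[[\pi_1^\ell(X_{\bar k},\bar x)]]^{\leq\ell^{-r}}$ whenever $r$ strictly exceeds the asymptotic slope $\lim_{n\to\infty}v(i,n,q)/n$. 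By Lemma \ref{q-power-sum-bound} this slope equals exactly $\tfrac{1}{s}(v_\ell(q^s-1)+\tfrac{1}{\ell-1}+\epsilon)$; thus $\mathbb{Q}_\ell[[\pi_1^\ell(X_{\bar k},\bar x)]]^{\leq\ell^{-r}}$ admits a dense set of $\sigma$-eigenvectors with eigenvalues in $\{q^i\}_{i\geq 0}$ for every such $r$.

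The endgame then mimics the proof of Theorem \ref{main-arithmetic-result}: apply Lemma \ref{subquotient-lemma} to reduce to the case where $\rho$ is a subquotient of some $\beta:\pi_1^{\text{\'et}}(X_{k'},\bar x)\to GL_m(\mathbb{Z}_\ell)$ with $\beta|_{\pi_1(X_{\bar k})}$ trivial mod $\ell^N$; pick an exponent $e$ with $\sigma^e\in G_{k'}$ (the $\sigma$-eigenvectors above are a fortiori $\sigma^e$-eigenvectors, with eigenvalues in $\{q^{ei}\}_{i\geq 0}$, an infinite set because $q$ is not a root of unity). Proposition \ref{convergence-prop} then yields a $\sigma^e$-equivariant continuous ring map $\tilde\beta:\mathbb{Q}_\ell[[\pi_1^\ell(X_{\bar k},\bar x)]]^{\leq\ell^{-r}}\to\mathfrak{gl}_m(\mathbb{Q}_\ell)$ for any $r$ with $\tfrac{1}{s}(v_\ell(q^s-1)+\tfrac{1}{\ell-1}+\epsilon)<r<N$. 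Since $\mathfrak{gl}_m(\mathbb{Q}_\ell)$ is finite-dimensional, $\sigma^e$ has only finitely many eigenvalues on it, so $q^{ei}$ fails to be an eigenvalue once $i$ is large; hence $\tilde\beta(\mathscr{I}^i)=0$ for $i\gg 0$, forcing $\beta|_{\pi_1(X_{\bar k})}$, and therefore $\rho$, to be unipotent. The main conceptual obstacles of the general Theorem \ref{main-arithmetic-result} -- the mixed-weight semisimplicity of Theorem \ref{quasi-scalar-semisimple} and the $p$-adic Hodge input of Lemma \ref{quasi-scalar-h1} -- are entirely absent, so the only remaining work is the explicit valuation bookkeeping packaged in Lemmas \ref{q-powers-bound}--\ref{q-power-sum-bound}, which is precisely what produces the tight constant $\tfrac{1}{s}(v_\ell(q^s-1)+\tfrac{1}{\ell-1}+\epsilon)$.
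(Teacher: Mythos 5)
Your proposal is correct and is essentially the paper's own argument: the paper likewise picks $\sigma_q$ with $\chi(\sigma_q)=q$, uses that $H^1(X_{\bar k},\mathbb{Q}_\ell)$ is pure of weight $2$ so the weight filtration coincides with the $\mathscr{I}$-adic filtration (this is exactly the content of Remark \ref{r-alpha-remark}, giving $r_\alpha=C(q,\ell,1)$ and avoiding both Lemma \ref{quasi-scalar-h1} and the factor of two), and then runs the endgame of Theorem \ref{main-arithmetic-result} verbatim. Your write-up simply unfolds the references the paper's two-paragraph proof makes; no substantive difference.
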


\begin{remark}\label{dumb-bounds-remark}
For any $X$, the bound above becomes $$N>\frac{1}{\ell-1}+\frac{1}{(\ell-1)^2}$$ for odd $\ell$, if $q$ is a topological generator of $\mathbb{Z}_\ell^\times$.  And we may choose $q$ to be a topological generator if $\ell>2$ and the cyclotomic character $$\chi: \on{Gal}(\overline{k}/k)\to \mathbb{Z}_\ell^\times$$ is surjective. Thus in particular if $\chi$ is surjective at $\ell$ for some odd $\ell$ (note that this holds for almost all $\ell$), then any arithmetic representation of $\pi_1^{\text{\'et}}(X_{\bar k})$, which is trivial mod $\ell$, is unipotent. 
\end{remark}
\begin{proof}[Proof of Theorem \ref{p1-main-theorem}]  
We first choose a rational (or rational tangential) basepoint $x$ of $X$ over $k$; we may always choose a rational tangential basepoint as the $x_i\in \mathbb{P}^1(k)$.  Let $\sigma_q\in G_k$ be such that $\chi(\sigma_q)=q$.  Choose $r$ with $$N>r>\frac{1}{s}\left(v_\ell(q^s-1)+\frac{1}{\ell-1}+\epsilon\right)=C(q,\ell,1),$$
where $C(q,\ell,1)$ is defined as in Lemma \ref{q-power-sum-bound}.

In this case, $$H^1(X, \mathbb{Z}_\ell)=\mathbb{Z}_\ell(1)^{n-1},$$ so $\sigma_q$ satisfies the conclusions of Theorem \ref{quasi-scalar-pi1}.  Thus by Theorem \ref{quasi-scalar-eigenvectors} and Remark \ref{r-alpha-remark} (using that $r>C(q,\ell,1)$), $W^{-i}\mathbb{Q}_\ell[[\pi_1^\ell(X_{\bar k}, \bar x)]]^{\leq \ell^{-r}}$ admits a set of $\sigma_q$-eigenvectors with dense span in the Gauss norm topology, and with eigenvalues in $\{q^i, q^{i+1}, \cdots\}$.  Now we conclude by precisely imitating the proof of Theorem \ref{main-arithmetic-result}.
\end{proof}
\begin{corollary}\label{main-theorem-P1}
Let $k$ be a field with prime subfield $k_0$, and let $$X=\mathbb{P}^1_{\bar k}\setminus\{x_1, \cdots, x_n\}.$$ Let $K$ be the field generated by cross-ratios of the $x_i$, that is, $$K=k_0\left(\frac{x_a-x_b}{x_c-x_d}\right)_{1\leq a<b<c<d\leq n}.$$   Let $\ell$ be a prime different from the characteristic of $k$ and $q\in \mathbb{Z}_\ell^\times$ be any element of the image of the cyclotomic character $$\chi: \on{Gal}(\overline{K}/K)\to \mathbb{Z}_\ell^\times;$$ let $s$ be the order of $q$ in $\mathbb{F}_\ell^\times$ if $\ell\not=2$ and in $(\mathbb{Z}/4\mathbb{Z})^\times$ if $\ell=2$.  Let $\epsilon=1$ if $\ell=2$ and $0$ otherwise.  Let $$\rho: \pi_1^{\text{\'et}}(X, x)\to GL_m(\mathbb{Z}_\ell)$$ be a continuous representation which is trivial mod $\ell^N$, with $$N>\frac{1}{s}\left(v_\ell(q^s-1)+\frac{1}{\ell-1}+\epsilon\right).$$  If $\rho$ is geometric, then it is unipotent.  
\end{corollary}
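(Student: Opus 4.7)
The plan is to reduce Corollary \ref{main-theorem-P1} to Theorem \ref{p1-main-theorem} applied with the cross-ratio field $K$ as the base field. Two steps suffice: descending $X$ to a $K$-variety via a Möbius normalization, and verifying that any geometric representation of $\pi_1^{\text{\'et}}(X,x)$ is automatically arithmetic over $K$ in the sense of Definition \ref{arithmetic-defn}.

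First I would assume $n \geq 3$ (the cases $n \leq 2$ being immediate as $\pi_1^{\ell}(X_{\bar k})$ is then trivial or pro-cyclic), and apply a $k$-rational Möbius transformation of $\mathbb{P}^1$ sending three of the $x_i$ to $0, 1, \infty$. The coordinates of the remaining points are then explicit cross-ratios of the original $x_i$, so they lie in $K$; after renaming, $X \cong X_K \times_K \bar k$ with $X_K := \mathbb{P}^1_K \setminus \{0, 1, \infty, y_4, \ldots, y_n\}$, $y_i\in K$. By the invariance of the geometric \'etale fundamental group under extension of algebraically closed base fields, $\pi_1^{\text{\'et}}(X, x)$ is naturally identified with $\pi_1^{\text{\'et}}((X_K)_{\bar K}, \bar x)$.

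Next I would mimic the proof of Corollary \ref{main-geometric-cor} to show $\rho$ is arithmetic over $K$. Since $\rho$ is geometric, it is a subquotient of $R^i\pi_*\underline{\mathbb{Z}_\ell}$ for some smooth proper $\pi : Y \to X$. Standard spreading-out descends $\pi$ to a smooth proper morphism $\pi_R : Y_R \to X_R$ over a finitely generated $K$-algebra $R \subset \bar k$, and after quotienting out $\ell$-torsion the relevant subquotient of $R^i\pi_{R,*}\underline{\mathbb{Z}_\ell}$ is lisse. Specializing to any closed point $p$ of $\operatorname{Spec}(R)$ --- whose residue field $K''$ is a \emph{finite} extension of $K$ by the Nullstellensatz, since $R$ is a finitely generated $K$-algebra --- produces a representation $\tilde\rho : \pi_1^{\text{\'et}}(X_{K''}, \bar x) \to GL_N(\mathbb{Z}_\ell)$ whose restriction to $\pi_1^{\text{\'et}}((X_K)_{\bar K}, \bar x)$ contains $\rho$ as a subquotient, by the standard fact that geometric monodromy of a lisse sheaf in a smooth proper family is preserved under specialization of the base. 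Hence $\rho$ is arithmetic over $K$ in the sense of Definition \ref{arithmetic-defn}, with $k' = K''$.

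Finally, since $K$ is finitely generated and $\ell \neq \operatorname{char}(K)$, Theorem \ref{p1-main-theorem} applies directly to $X_K/K$ with the given $q \in \chi(G_K)$, yielding that $\rho$ is unipotent. The main technical point is verifying that the subquotient relation on the geometric fundamental group really does descend from $R$-generic fiber to the specialization at $p$; this is a routine application of proper-smooth base change and is essentially the same bookkeeping used at the end of the proof of Corollary \ref{main-geometric-cor}, and so is not a serious obstacle.
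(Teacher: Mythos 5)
Your proposal is correct and follows essentially the same route as the paper: the paper's own proof consists of observing that $X$ descends to a natural model over the cross-ratio field $K$ and then repeating, verbatim, the spreading-out/specialization argument by which Corollary \ref{main-geometric-cor} is deduced from Theorem \ref{main-arithmetic-result}, finally invoking Theorem \ref{p1-main-theorem} over $K$. Your Möbius normalization makes the ``natural model over $K$'' explicit, and your specialization step is exactly the paper's intended argument.
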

\begin{proof}
We let $X=\mathbb{P}^1_{k}\setminus\{x_1, \cdots, x_n\}$.  Then $X$ admits a natural model over the field $$K=k_0\left(\frac{x_a-x_b}{x_c-x_d}\right)_{1\leq a<b<c<d\leq n}.$$ Now the result follows from Theorem \ref{p1-main-theorem} in a manner identical to the deduction of Corollary \ref{main-geometric-cor} from Theorem \ref{main-arithmetic-result}.
\end{proof}
\begin{remark}\label{dumb-bounds-remark-2}
If $k=\mathbb{Q}$ in Theorem \ref{p1-main-theorem}, or if $K=\mathbb{Q}$ in Corollary \ref{main-theorem-P1}, we may take $N(X,\ell)=1$ for any odd $\ell$, by Remark \ref{dumb-bounds-remark}.
\end{remark}
We give one final corollary over fields of arbitrary characteristic.  Observe that if there is a map $\mathbb{P}^1\setminus\{x_1, \cdots, x_n\}\to X$ which induces a surjection on geometric pro-$\ell$ fundamental groups, we may apply Theorem \ref{main-theorem-P1} to find restrictions on arithmetic representations of the fundamental group of $X$.  Such a map exists if $X$ is an open subvariety of a separably rationally connected variety, by the main result of \cite{kollar}.  Thus we have
\begin{corollary}
Let $k$ be a finitely generated field an $X$ an open subset of a separably rationally connected $k$-variety.  Let $\ell$ be a prime different from the characteristic of $k$.  Then there exists $N=N(X,\ell)$ such that if $$\rho:\pi_1^{\text{\'et}}(X_{\bar k})\to GL_m(\mathbb{Z}_\ell)$$ is arithmetic, and is trivial mod $\ell^N$, then $\rho$ is unipotent.
\end{corollary}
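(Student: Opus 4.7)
The plan is to bootstrap from Theorem \ref{p1-main-theorem} via the surjectivity hinted at in the remark preceding the corollary. Since $X$ is an open subset of a separably rationally connected $k$-variety, the cited result of \cite{kollar} produces a non-constant morphism $f_{\bar k}: \mathbb{P}^1_{\bar k} \setminus \{x_1,\dots,x_n\} \to X_{\bar k}$ whose induced map on geometric pro-$\ell$ fundamental groups is surjective. By spreading out, both $f$ and the points $x_i$ can be taken defined over a finite extension $k'/k$; I set $C := \mathbb{P}^1_{k'}\setminus\{x_1,\dots,x_n\}$ and let $N_0 := N(C,\ell)$ be the integer produced by Theorem \ref{p1-main-theorem} applied to $C$. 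The candidate bound is $N(X,\ell) := \max(N_0, 2)$.

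Next I would transfer the hypotheses from $X$ to $C$. Suppose $\rho$ is arithmetic and trivial mod $\ell^{N(X,\ell)}$. Triviality mod $\ell^{N(X,\ell)}$ forces the image of $\rho$ into a pro-$\ell$ subgroup of $GL_m(\mathbb{Z}_\ell)$, so $\rho$ factors through the maximal pro-$\ell$ quotient $\pi_1^\ell(X_{\bar k})$. Form the pullback $f^*\rho := \rho \circ (f_{\bar k})_*$, a continuous representation of $\pi_1^{\text{\'et}}(C_{\bar k})$; it is again trivial mod $\ell^{N(X,\ell)}$, hence trivial mod $\ell^{N_0}$. Moreover $f^*\rho$ is arithmetic: choose $k''/k$ finite so that $\rho$ extends to $\pi_1^{\text{\'et}}(X_{k''})$, and after enlarging $k''$ to contain $k'$ so that $f$ is defined over $k''$, composing the extension with $f_*:\pi_1^{\text{\'et}}(C_{k''}) \to \pi_1^{\text{\'et}}(X_{k''})$ gives an extension of $f^*\rho$ to $\pi_1^{\text{\'et}}(C_{k''})$.

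With the hypotheses transferred, Theorem \ref{p1-main-theorem} applied to $f^*\rho$ (noting $N_0 \geq$ the explicit bound it requires) forces $f^*\rho$ to be unipotent. Since $(f_{\bar k})_* : \pi_1^\ell(C_{\bar k}) \twoheadrightarrow \pi_1^\ell(X_{\bar k})$ is surjective and $\rho$ factors through $\pi_1^\ell(X_{\bar k})$, every element of the image of $\rho$ has the form $(f^*\rho)(g)$ for some $g \in \pi_1^{\text{\'et}}(C_{\bar k})$, and is therefore unipotent. Hence $\rho$ is unipotent, as required.

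The whole argument is essentially a black-box reduction, so I anticipate no serious mathematical obstacle. The only delicate point is bookkeeping during descent: one must arrange for Koll\'ar's morphism, the configuration of punctures, and the arithmetic extension of $\rho$ all to be realized over a single finite extension of $k$, and then verify that the pullback along $f_*$ carries arithmeticity on $X$ to arithmeticity on $C$. This is what controls the final integer $N(X,\ell)$, which depends only on the model of $C$ obtained from Koll\'ar's theorem and on $\ell$.
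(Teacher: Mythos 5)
Your proposal is correct and follows the same route as the paper: the paper's proof is precisely the observation in the preceding paragraph that Koll\'ar's theorem supplies a map $\mathbb{P}^1\setminus\{x_1,\dots,x_n\}\to X$ surjective on geometric pro-$\ell$ fundamental groups, after which one pulls back $\rho$ (which factors through $\pi_1^\ell(X_{\bar k})$ by mod-$\ell^N$ triviality) and applies the $\mathbb{P}^1$ result. Your write-up just makes explicit the bookkeeping (descent of the Koll\'ar map to a finite extension, preservation of arithmeticity and of mod-$\ell^N$ triviality under pullback) that the paper leaves implicit; the only nitpick is that ``arithmetic'' means \emph{subquotient} of a restricted extension rather than that $\rho$ itself extends, but your argument transfers verbatim to the subquotient formulation.
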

\subsection{Sharpness of results and examples}\label{sharp-section}

The hypothesis on the cyclotomic character in Theorem \ref{p1-main-theorem} and Corollary \ref{main-theorem-P1} may seem strange, but they are in fact necessary.
\begin{example}
Let $\ell=3$ or $5$, and consider the (connected) modular curve $Y(\ell)$, parametrizing elliptic curves with full level $\ell$ structure.  $Y(\ell)$ has genus zero.  Let $E\to Y(\ell)$ be the universal family, and $\bar x$ any geometric point of $Y(\ell)$.  Then the tautological representation $$\rho: \pi_1^{\text{\'et}}(Y(\ell), \bar x)\to GL(T_\ell(E_{\bar x}))$$ is trivial mod $\ell$, and $\rho|_{\pi_1^{\text{\'et}}(Y(\ell)_{\bar k}, \bar x)
}$ is evidently both arithmetic and geometric.  This does not contradict Theorem \ref{p1-main-theorem}, Corollary \ref{main-theorem-P1}, or Remark \ref{dumb-bounds-remark-2} because the field generated by the cross-ratios of the cusps of $Y(\ell)$ is $\mathbb{Q}(\zeta_\ell)$, whose cyclotomic character is not surjective at $\ell$.
\end{example}
We may use these results to construct example of representations of fundamental groups which do not come from geometry --- indeed, the following is an example of a representation which is not arithmetic or geometric, and which we do not know how to rule out by other means:
\begin{example}\label{ridic-example}
As before, let $Y(3)$ be the modular curve parametrizing elliptic curves with full level three structure.  Then $$Y(3)_{\mathbb{C}}\simeq \mathbb{P}^1\setminus\{0,1,\infty, \lambda\}$$ where $\lambda\in \mathbb{Q}(\zeta_3)$; let $x\in Y(3)(\mathbb{C})$ be a point.  Let $\rho$ be the tautological representation $$\rho: \pi_1(Y(3)(\mathbb{C})^{\text{an}}, x)\to GL(H^1(E_x(\mathbb{C})^{\text{an}}, \mathbb{Z})).$$  Let $$X=\mathbb{P}^1\setminus \{0,1,\infty, \beta\}$$ where $\beta\in \mathbb{Q}\setminus\{0,1\}$.  Then $X(\mathbb{C})^{\text{an}}$ is homeomorphic to $Y(3)(\mathbb{C})^{\text{an}}$ (indeed, both are homeomorphic to a four-times-punctured sphere); let $$j: X(\mathbb{C})^{\text{an}}\to Y(3)(\mathbb{C})^{\text{an}}$$ be such a homeomorphism.  Then the representation $$\tilde{\rho}: \pi_1(X(\mathbb{C})^{\text{an}}, j^{-1}(x))\overset{j_*}{\longrightarrow} \pi_1(Y(3)(\mathbb{C})^{\text{an}}, x)\overset{\rho}{\longrightarrow}  GL(H^1(E_x(\mathbb{C})^{\text{an}}, \mathbb{Z}))$$ is trivial mod $3$ and thus cannot arise from geometry, by Remark \ref{dumb-bounds-remark-2}.  Likewise, the representation $$\pi_1^{\text{\'et}}(X)\to GL(H^1(E_x, \mathbb{Z}_\ell))$$ obtained from $\tilde \rho$ cannot be arithmetic, again by Remark \ref{dumb-bounds-remark-2}.  

We do not know a way to see this using pre-existing methods, since any criterion ruling out this representation would have to detect the difference between $X$ and $Y(3)$; for example, the quasi-unipotent local monodromy theorem does not rule out $\tilde \rho$.  This example was suggested to the author by George Boxer.
\end{example}
\begin{example}
Let $X/\mathbb{Q}$ be a proper genus two curve; recall that $$\pi_1(X(\mathbb{C})^{\text{an}})=<a_1, b_1, a_2, b_2>/([a_1,b_1][a_2,b_2]=1).$$  Let $\ell$ be a prime and $A, B$ non-unipotent $n\times n$ integer matrices which are equal to the identity mod $\ell^N$.  Then for $N\gg 0$, Corollary \ref{main-geometric-cor} implies that the representation $$a_1\mapsto A, b_1\mapsto B, a_2\mapsto B, b_2\mapsto A$$ does not come from geometry; likewise the induced representation $$\pi_1^{\text{\'et}}(X_{\overline{\mathbb{Q}}})\to GL_n(\mathbb{Z}_\ell)$$ is not arithmetic by Theorem \ref{main-arithmetic-result}.  Again, we do not know how to see this using previously known results.
\end{example}

On the other hand, we do not expect our bounds on $r_\alpha$ in Theorem \ref{quasi-scalar-eigenvectors} to be sharp for arbitrary $X$.  In some cases, however, one may bound $r_\alpha$ from below by finding a non-unipotent arithmetic representation of $\pi_1(X_{\bar k})$ trivial mod $\ell^M$.  In this case, by the contrapositive of Theorem \ref{main-arithmetic-result}, we have $r_\alpha\geq M$ for any $\sigma_\alpha$; for example, by considering the tautological monodromy representation of the fundamental group of the modular curve $Y(\ell^M)$, we find that $r_\alpha\geq M$ for any $\sigma_\alpha$ acting on the fundamental group of $Y(\ell^M)$.

We were also (with the exception of Theorem \ref{p1-main-theorem} and Corollary \ref{main-theorem-P1}) unable to prove results in positive characteristic, because $\ell$-adic open image theorems as in \cite{bogomolov} are no longer true in this case, so the proof of Lemma \ref{quasi-scalar-h1} fails.  We could salvage this situation if the following question has a positive answer:
\begin{question}
Let $X$ be a smooth curve over a finite field $k$, and let $\ell$ be a prime different from the characteristic of $k$.  Does there exist an $r=r(X)$ such that ${\mathbb{Q}_\ell}[[\pi_1^{\ell}(X_{\bar k}, \bar x)]]^{\leq \ell^{-r}}$ admits a set of Frobenius eigenvectors with dense span?
\end{question}
As in Theorem \ref{quasi-scalar-eigenvectors}, one would need to show that for a Frobenius eigenvector $y\in {\mathbb{Q}_\ell}[[\pi_1^{\ell}(X_{\bar k}, \bar x)]]$, one has $|v_n(\pi_n(y))|=O(n)$.  We can show, using Yu's $p$-adic Baker's theorem on linear forms in logarithms \cite{yu-baker}, that for almost all $\ell$, $|v_n(\pi_n(y))|=O(n\log n)$, which does not suffice for our applications.

One could also ask for stronger uniformity in geometric invariants of $X$.  For example, a positive answer to the following question would imply positive answer to a ``pro-$\ell$" version of the geometric torsion conjecture, by the methods of this section.
\begin{question}
Let $X$ be a smooth curve over a finite field $k$, and let $\ell$ be a prime different from the characteristic of $k$.  Does there exist an $r=r(\on{gonality}(X), \ell)>0$, depending only on the gonality of $X$ and on $\ell$, such that $\overline{\mathbb{Q}_\ell}[[\pi_1^{\ell}(X_{\bar k}, \bar x)]]^{\leq \ell^{-r}}$ admits a set of Frobenius eigenvectors with dense span?  If so, does $r$ tend to zero rapidly with $\ell$?
\end{question}

\bibliographystyle{alpha}
\bibliography{monodromy-bibtex}

\end{document}